\newtheorem{remark}{\textbf{Remark}}[section]
\newtheorem{assumption}[theorem]{Assumption}
\newtheorem{property}[theorem]{Property}
\newcommand{\cA}{\mathcal{A}}
\newcommand{\com}[1]{ }
\begin{document}
\title{Convergence analysis of a finite element approximation of
minimum action methods}
\author{Xiaoliang Wan%
\thanks{Department of Mathematics and Center for Computation
\& Technology, Louisiana State University, Baton Rouge, 70803 (\tt xlwan@math.lsu.edu, jzhai1@lsu.edu).}
\and
Haijun Yu%
\thanks{School of Mathematical Sciences, University of Chinese Academy of Sciences, Beijing  100049, China;
	NCMIS \& LSEC, Institute of Computational Mathematics and Scientific/Engineering Computing, Academy of Mathematics and Systems Science, Beijing 100190, China (\tt hyu@lsec.cc.ac.cn)}
\and
Jiayu Zhai\footnotemark[1]
}

\maketitle

\begin{abstract}
In this work, we address the convergence of a finite element
approximation of the minimizer of the Freidlin-Wentzell (F-W) action
functional for non-gradient dynamical systems perturbed by small
noise. 
The F-W theory of large deviations is a rigorous
mathematical tool to study small-noise-induced transitions in a
dynamical system. The central task in the application of F-W
theory of large deviations is to seek the minimizer and minimum
of the F-W action functional. We discretize the F-W action
functional using linear finite elements, and establish the
convergence of {the approximation} through $\Gamma$-convergence.
\end{abstract}

\begin{keywords} 
	large deviation principle, minimum action method,
	convergence analysis, non-gradient system, 
	phase transition
\end{keywords}

\begin{AMS}
	65M60, 65P40, 65K10 
\end{AMS}

\section{Introduction}
We consider a general dynamical system perturbed by small noise
\begin{equation}\label{eqn:sode}
dX=b(X)\,dt+\sqrt{\varepsilon}\,dW(t),
\end{equation}
where $\varepsilon$ is a small positive number and $W(t)$ is a
standard Wiener process in $\mathbb{R}^n$. The long-term
behavior of the perturbed system is characterized by the
small-noise-induced transitions between the equilibriums of the
unperturbed system 
\begin{equation}\label{eqn:dynamical_system}
\frac{dx}{dt}=b(x),\quad x\in \mathbb{R}^n.
\end{equation}
These transitions rarely occur but have a
major impact. This model can describe many critical phenomena in
physical, chemical, and biological systems, such as
non-equilibrium interface growth \cite{Fogedby_PRE2009,Smith_PRE17},
regime change in
climate \cite{Yao_JCP15}, switching in biophysical network
\cite{Wells_PRX15}, hydrodynamic instability
\cite{Wan_MAM_Poiseuille,Wan_MAM_dconsistent}, etc.

The Freidlin-Wentzell (F-W) theory of large deviations provides
a rigorous mathematical framework to understand the small-noise-induced
transitions in general dynamical systems, where the key object
is the F-W action functional, and the critical quantities include
the minimizer and minimum of the F-W action functional \cite{FW}. Starting from
\cite{String2002}, the large deviation principle given by the F-W theory
has been approximated numerically, especially for non-gradient systems,
and the numerical methods are, in general, called minimum action method (MAM).
More specifically, the following optimization problems need to be addressed:
\begin{equation} \label{ProbI}
\textrm{Problem I}:\quad S_T(\phi^*)=\inf_{\substack{\phi(0)=x_1,\\
\phi(T)=x_2}}S_T(\phi),
\end{equation}and
\begin{equation} \label{ProbII}
\textrm{Problem II}:\quad S_{T^*}
(\phi^*)=\inf_{T\in\mathbb{R}^+}\inf_{\substack{\phi(0)=x_1,\\
\phi(T)=x_2}}S_T(\phi),
\end{equation}
where 
\begin{equation}\label{eqn:action}
S_T(\phi)=\frac{1}{2}\int_{0}^T
|{\phi}'-b(\phi)|^2\,dt
\end{equation}
is called the action functional. 
Here $\phi(t)$ is a path connecting $x_1$ and $x_2$
in the phase space on the time interval $[0,T]$. The {minima} and minimizers of
Problems I and II characterize the difficulty of the {small-}noise-induced
transition from $x_1$ to the vicinity of $x_2$, see equations
\eqref{eqn:LDP-fixed-T} and \eqref{eqn:LDP-quasi-potential}.
In Problem I, the transition is restricted to a certain time scale
$T$, which is relaxed in Problem II. Let $\phi^*(t)$ be the
minimizer of either Problem I or Problem II, which is also called
the minimal action path (MAP), or the instanton in physical
literature related to path integral. For problem II, we have an
optimal integration time $T^*$ which can be either finite or infinite
depending on the states $x_1$ and $x_2$.

We will focus on the minimum action method for non-gradient systems.
For gradient systems, the minimal action path is consistent with the
minimum energy path, and the counterpart version of minimum action method
includes string method \cite{String2002}, nudged elastic band method
\cite{NEB1998}, etc.,
which takes advantage of the property that the minimal action path is
parallel to the drift term of the stochastic differential equation.
For non-gradient systems, this property does not hold and
a direct optimization of the F-W action functional needs to be considered.
The main numerical difficulty comes from the separation of slow
dynamics around critical points from fast dynamics elsewhere. More
specifically, the MAP will be mainly captured by the fast dynamics subject
to a finite time, but it will take infinite time to pass a critical point.
To overcome this difficulty, there exist two basic techniques: (1)
non-uniform temporal discretization, and (2) reformulation of the
action functional with respect to arc length. Two typical techniques
to achieve non-uniform temporal discretization include moving mesh
technique and adaptive finite element method. The moving mesh technique
starts from a uniform finite mesh and {redistributes} the grid points iteratively
such that more grids are assigned into the region of fast dynamics and
less grids into the region of slow dynamics. This technique is used
by the adaptive minimum action method (aMAM)
\cite{Zhou_JCP08,Wan_MAM11,Wan_MAM_parallel,Zhou_2016}.
The adaptive finite element method starts from a coarse mesh and has an
inclination to refine the mesh located in the region of fast dynamics
\cite{Wan_tMAM,Wan_tMAM_hp}. The main difference of these
two techniques from the efficiency point of view is that the moving mesh
technique needs a projection from fine mesh to fine mesh, i.e., global
reparameterization, while the adaptive finite {element} method only needs local
projection in the elements that have been refined. To eliminate the scale
separation from dynamics, one can consider parameterization
of the curves geometrically, i.e., a change of variable from time to
arc length, which is used in the geometric minimum action method (gMAM)
\cite{Heymann_CPAM08,Grafke_MMS14,Grafke_2016}. The change of variable
induces two difficulties.
One is related to accuracy and the other one is related to efficiency.
The mapping from time to arc length is nonlinear and the {Jacobian} 
{of the transform between time and arc length variables} is
singular around critical points since an infinite time domain has been
mapped to a finite arc length. Unknown critical points along the minimal
action path may deteriorate the approximation accuracy unless they can
be identified accurately. To use arc length for parameterization, {we have that} the
velocity {is} a constant, which means in each iteration step a
global reparameterization is needed to maintain this constraint.

Both aMAM and gMAM target to the case that $T^*=\infty$. In aMAM, a finite
but large $T$ is used while in gMAM, the infinite $T^*$ is mapped to a finite
arc length. So, aMAM is not able to deal with Problem II subject to a finite
$T^*$ since a fixed $T$ is required while gMAM is not able to deal with
Problem I since $T$ has been removed. To deal with both Problem I and II
in a more consistent way, we have developed {a minimum action method 
with optimal linear time scaling (temporal minimum action method, or tMAM)} 
{coupled} with adaptive
finite element discretization \cite{Wan_tMAM,Wan_tMAM_hp}. The method is
based on two observations: (1)
for any given transition path, there exists a unique $T$ to minimize
the action functional subject to a linear scaling of time, and (2) for
transition paths defined on a finite element approximation space, the
optimal integration time $T^*$ is always finite but increases as the
approximation space is refined. The first observation removes the parameter
$T$ in Problem II and the second observation guarantees that the discrete
problem of Problem II is well-posed after $T$ is removed. Problem I becomes
a special case of our reformation of Problem II. This way, tMAM is able to
deal with both Problem I and II.

Although many techniques have been developed from the algorithm point of
view, few numerical analysis has been done for minimum action method. We
want to fill this gap partially in this work. We consider a general
stochastic {ordinary differential equation (ODE)} system \eqref{eqn:sode}. 
The discrete action functional
$S_{T,h}$ will be given by linear finite elements for simplicity, where
$h$ indicates the element size. Due to the general assumption
for $b(x)$, we will focus on the convergence of the minimizers of $S_{T,h}$ 
as $h\rightarrow0$ and only provide a priori error estimate for the approximate
solution when $b(x)$ is a linear {symmetric positive definite (SPD)} system. 
For Problem I, the
convergence of the minimizer $\phi^*_h$ to $\phi^*$ is established by the
$\Gamma$-convergence of the discretized action functional. For Problem II,
we employ and analyze the strategy developed in \cite{Wan_tMAM} to deal with
the optimization with respect to $T$. More specifically, we reformulate
the problem from $[0,T]$ to $[0,1]$ by a linear time scaling $s=t/T$ and
replace the integration time $T$ with a functional $\hat{T}(\bar{\phi})$
with $\bar{\phi}(s)=\phi(t/\hat{T})$, where $\hat{T}(\bar{\phi})$ is the optimal
integration time for a given transition path $\bar{\phi}$. When $T^*$ is finite,
the convergence of the minimizer $\bar{\phi}_h^*$ to
$\bar{\phi}^*(s)=\phi^*(t/T^*)$ can be established by the
$\Gamma$-convergence of the discretized action functional. When $T^*=\infty$,
the linear mapping from $t$ to $s$ does not hold. We demonstrate that
the sequence $\{\bar{\phi}_h^*\}$ still provides a minimizing
sequence as $h\rightarrow0$ and establish the convergence using the
results from gMAM. Due to the nonlinearity of $b(x)$, the Euler-Lagrange {(E-L)}
equation associated with the action functional is, in general, a
nonlinear elliptic problem for Problem I. For problem II subject to a
optimal linear time scaling, the {E-L} equation remains the same
form as Problem I with the parameter $T$ being replaced by a functional
$\hat{T}(\bar{\phi})$, which becomes a nonlocal and nonlinear elliptic equation.
When $b(x)$ is a linear {SPD} system, we are able to establish the a priori error
estimate for $\bar{\phi}_h^*$, where the {E-L} equation
is {a} nonlocal and nonlinear elliptic problem of Kirchhoff type.

The remain part of this paper is organized as follows. In Section 2, we describe
the problem setting. A reformulation of the Freidlin-Wentzell action
functional is given in Section 3 to deal with
the optimization with respect to $T$ in Problem II.
We establish the convergence of finite
element approximation in Section 4 for general {stochastic ODE} systems. In
Section 5, we apply our method to a linear {stochastic ODE} system and
provide a prior error estimate of the approximation
solution. Numerical illustrations are given in Section 6
followed by a summary section.

\section{Problem description}
We consider the small-noise-perturbed dynamical system  \eqref{eqn:sode}. 
Let $x_1$ and $x_2$ be two arbitrary points in the phase space. The
Freidlin-Wentzell theory of large deviations provides asymptotic results
to estimate the transition probability from $x_1$ to the vicinity of
$x_2$ when $\varepsilon\rightarrow0$. If we restrict the transition on
a certain time interval $[0,T]$, we have
\begin{equation}\label{eqn:LDP-fixed-T}
\lim_{\delta\downarrow0}
\lim_{\varepsilon\downarrow0}-\varepsilon\log\Pr(\tau_{\delta}\leq
T)=\inf_{\substack{\phi(0)=x_1,\\ \phi(T)=x_2}}S_T(\phi),
\end{equation}
where $\tau_\delta$ is the first entrance time of the $\delta$-neighborhood
of $x_2$ for the random process $X(t)$ starting from $x_1$. {The path variable $\phi$ connecting $x_0$ and $x_1$, over which the action functional is minimized, is called a \textit{transition path}.} If the time scale
is not specified, the transition probability can be described with respect to
the quasi-potential from $x_1$ to $x_2$:
\begin{equation}\label{eqn:quasi-potential}
V(x_1,x_2){:=}\inf_{T\in\mathbb{R}^+}\inf_{\substack{\phi(0)=x_1,\\
\phi(T)=x_2}}S_T(\phi).
\end{equation}
The probability meaning of $V(x_1,x_2)$ is
\begin{equation}\label{eqn:LDP-quasi-potential}
V(x_1,x_2)={\inf_{T\in\mathbb{R}^+}}\lim_{\delta\downarrow0}
\lim_{\varepsilon\downarrow0}-\varepsilon\log\Pr(\tau_{\delta}\leq T).
\end{equation}
We in general call the asymptotic results given in equations
\eqref{eqn:LDP-fixed-T} and \eqref{eqn:LDP-quasi-potential} large deviation
principle {(LDP)}. We use $\phi^*$ to indicate transition path that minimizes the
action functional in equation \eqref{eqn:LDP-fixed-T} or
\eqref{eqn:LDP-quasi-potential}, which is also called the minimal action
path (MAP) \cite{E_CPAM04}. The MAP $\phi^*$ is the most probable transition
path from $x_1$ to $x_2$. For the quasi-potential, we let $T^*$ indicate the
optimal integration time, which can be either finite or infinite depending
on $x_1$ and $x_2$. The importance of LDP is that it simplifies the
computation of transition probability, which is a path integral in a
function space, to seeking the minimizers $\phi^*$ or $(T^*,\phi^*)$. 
From the application point of view, one central task of Freidlin-Wentzell
theory of large deviations is then to solve the Problem I and Problem II defined in \eqref{ProbI} and \eqref{ProbII}, correspondingly. 
For Problem II, we need to optimize the action functional with respect to
the integration time $T$. We will present a reformulation of $S_T$ in 
{Section} \ref{sec:reformulation_ST} to deal with this case.

To analyze the convergence properties of numerical approximations for Problem I and II, we need some assumptions on $b(x)$.
\begin{assumption}\label{assump:1}
	\begin{enumerate}
		\item[(1)] $b(x)$ is Lipschitz continuous in a big ball, i.e., there
		exist constants $K>0$ and $R_1>0$, such that
		\begin{equation}\label{eq:LLipCont}
		|b(x)-b(y)|\leq K|x-y|, \quad \forall\ x,y\in B_{R_1}(0),
		\end{equation}
		where $|\cdot|$ {denotes} the $\ell_2$ norm
		of a vector in $\mathbb{R}^n$;
		\item[(2)] There exist positive numbers $\beta, R_2$, such that
		\begin{equation}\label{eq:binfcond}
		\langle b(x), x\rangle \le -\beta |x|^2,\quad \forall\ |x|\ge R_2,
		\end{equation}
		where	$R_2^2 \le R_1^2 - \tfrac{S^*}{\beta}$,
		and 
		\[
		S^* = \max_{x,y\in B_{R_2}(0)}\frac{1}{2}\int_0^1
		\big|y-x - b\big(x+(y-x)t\big)\big|^2\,dt.
		\]
		\item[(3)] The solution points of $b(x)=0$ are isolated.
	\end{enumerate}
\end{assumption}
\begin{lemma}\label{lem:for_assump_2}
	Let assumption \eqref{eq:binfcond} hold. If both the 
	starting and ending points of a MAP $\phi(t)$ are inside $B_{R_2}(0)$, 
	then $\phi(t)$ is located within $B_{R_1}(0)$ for any $t$. 
\end{lemma}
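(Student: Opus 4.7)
The plan is a proof by contradiction, sandwiching the MAP's action from above by $S^*$ (via a straight-line competitor) and from below by a quantity that, once $\phi$ leaves $B_{R_1}(0)$, strictly exceeds $S^*$ thanks to the coercivity outside $B_{R_2}(0)$. The hypothesis $R_2^2 \le R_1^2 - S^*/\beta$ is precisely what bridges the two sides.

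For the upper bound, I would take the straight-line competitor $\psi(t) = x_1 + t(x_2 - x_1)$ over $t \in [0, 1]$. Since $x_1, x_2 \in B_{R_2}(0)$, the definition of $S^*$ in Assumption~\ref{assump:1} gives $S_1(\psi) \le S^*$, and since a MAP (of Problem~II) also minimizes over $T$, the MAP's action satisfies $S_{T^*}(\phi) \le S_1(\psi) \le S^*$.

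For the lower bound, suppose for contradiction $|\phi(t_0)| > R_1$ for some $t_0$, and let $t_1 < t_0$ be the last time with $|\phi(t_1)| = R_2$ (which exists by continuity and $|\phi(0)| < R_2$). On $[t_1, t_0]$ we have $|\phi| \ge R_2$, so the coercivity inequality $\langle b(\phi), \phi\rangle \le -\beta|\phi|^2$ yields
\begin{equation*}
\tfrac{d}{dt}|\phi|^2 \le 2\langle \phi' - b(\phi), \phi\rangle - 2\beta|\phi|^2.
\end{equation*}
Integrating over $[t_1, t_0]$ and applying Cauchy--Schwarz to the inner-product integral gives
\begin{equation*}
(|\phi(t_0)|^2 - R_2^2) + 2\beta I \le 2\sqrt{2\, S_{[t_1,t_0]}(\phi)\cdot I}, \qquad I := \int_{t_1}^{t_0}|\phi(t)|^2\, dt.
\end{equation*}
Squaring yields a quadratic inequality in $I$ whose nonnegative-discriminant condition forces
\begin{equation*}
S_{[t_1,t_0]}(\phi) \ge \beta\bigl(|\phi(t_0)|^2 - R_2^2\bigr) > \beta(R_1^2 - R_2^2) \ge S^*,
\end{equation*}
contradicting the upper bound.

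The main obstacle is tightening the coercivity argument into this sharp constant; naive bounds lose a factor of two and give only $S_{[t_1,t_0]}(\phi) \ge \beta(R_1^2 - R_2^2)/2$, which is insufficient. One auxiliary technicality is that the discriminant analysis also admits the degenerate root $S_{[t_1,t_0]}(\phi) = 0$; this must be separately ruled out by noting it forces $\phi'(t) = b(\phi(t))$ pointwise on $[t_1, t_0]$, so that by coercivity $|\phi|$ strictly decreases on this interval, contradicting $|\phi(t_0)| > R_2 = |\phi(t_1)|$.
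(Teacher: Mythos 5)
Your proof is correct, and its skeleton matches the paper's: bound the action of the MAP above by $S^*$ using the straight-line competitor, derive a differential inequality for $|\phi|^2$ from the coercivity \eqref{eq:binfcond}, and close the gap with the hypothesis $R_2^2\le R_1^2-S^*/\beta$. The difference is purely in how the cross term $2\langle w,\phi\rangle$ (with $w=\phi'-b(\phi)$) is handled. The paper applies the weighted Young inequality pointwise, $2\langle w,\phi\rangle\le \tfrac{1}{2\beta}|w|^2+2\beta|\phi|^2$, so the $2\beta|\phi|^2$ term cancels against the coercivity term and one gets $\tfrac{d}{dt}|\phi|^2\le\tfrac{1}{2\beta}|w|^2$ directly; integrating yields $|\phi(t)|^2\le R_2^2+S/\beta\le R_1^2$ with no contradiction argument. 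You instead keep the term, apply integral Cauchy--Schwarz, and extract $S\ge\beta\,(|\phi(t_0)|^2-R_2^2)$ from the discriminant of the resulting quadratic in $I$ --- which is exactly the same inequality, just obtained more laboriously. The ``main obstacle'' you flag (losing a factor of two with naive bounds) dissolves once the weight $\beta$ is chosen in the pointwise Young inequality, and your separate discussion of the degenerate root $S=0$ becomes unnecessary (for $S=0$ the quadratic's root is negative, so it is excluded automatically; in the paper's version the issue never arises). One small point in your favor: you state explicitly that the upper bound $S_{T^*}(\phi)\le S_1(\psi)\le S^*$ uses minimization over $T$, which the paper leaves implicit.
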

\begin{proof}
	Suppose that $\phi(t)$ is a MAP outside of $B_{R_2}(0)$ but connecting 
	two points $x$ and $y$ on the the surface of $B_{R_2}(0)$. 
	Let $w(t) = \phi' - b(\phi)$. We have
	\begin{equation}
	\phi' = b(\phi) + w.
	\end{equation}
	Taking inner product on both sides of the above equation with $2\phi$,
	we get
	\begin{equation}
	\frac{d|\phi|^2}{dt} = 2 \langle b(\phi),\phi \rangle + 2 \langle w,
	\phi\rangle.
	\end{equation}
	Then by using Cauchy's inequality with $\beta$, and assumption
	\eqref{eq:binfcond}, we get
	\begin{equation}
	\frac{d|\phi|^2}{dt} \le -2\beta |\phi|^2 + \frac{1}{2\beta} |w|^2
	+ 2\beta |\phi|^2 = \frac{1}{2\beta} |w|^2.
	\end{equation}
	Taking integration, and using the definition of minimum action,
	we obtain a bound for any $t$ along the MAP:
	\begin{equation}
	|\phi|^2
	\le |x|^2 + \int_0^t \frac{1}{2\beta} |w|^2 \,dt
	\le R_2^2 + \frac{1}{\beta} S_{T^*}(\phi)
	\le R_2^2 + \frac{1}{\beta} S^*\leq R_1^2,
	\end{equation}
	which means that the whole MAP is located within $B_{R_1}(0)$. 
\end{proof}

\begin{remark}\label{rmk:assump_2}
The assumptions \eqref{eq:LLipCont} and \eqref{eq:binfcond} allow most of
the physically relevant smooth nonlinear dynamics. It is seen from Lemma 
\ref{lem:for_assump_2} that the second assumption
\eqref{eq:binfcond} is used to restrict all MAPs of interest inside
$B_{R_1}(0)$. 
For simplicity and without loss of generality, we will assume from now on 
that the Lipschitz continuity of $b(x)$ is global, namely, $R_1=\infty$.
For the general case given in Assumption \ref{assump:1}, one can
achieve all the conclusions by restricting the theorems and their
proofs into $B_{R_1}(0)$. 
\end{remark}

We now summarize some notations that will be used later on. For
$\phi(t)\in\mathbb{R}^n$ defined on $\Gamma_T=[0,T]$, we let
$|\phi|^2=\sum_{i=1}^n|\phi_i|^2$ and
$|\phi|^2_{m,\Gamma_T}=\sum_{i=1}^n|\phi_i|_{m,\Gamma_T}^2$,
where $\phi_i$ is the $i$-th component of $\phi$ and $|\phi_i|^2_{m,\Gamma_T}=\int_{\Gamma_T}|\phi_i^{(m)}|^2\,dt$.
We let $\|\phi\|_{m,\Gamma_T}^2=\sum_{i=1}^n\|\phi_{{i}}\|_{m,\Gamma_{{T}}}^2$,
where $\|\phi_i\|_{m,\Gamma_T}^2=\sum_{k\leq m}
\int_{\Gamma_T}|\phi_i^{(k)}|^2\,dt$.
For $f(t),g(t)\in \mathbb{R}^n$ defined on $\Gamma_T$, we define the
inner products $\langle f,g\rangle=\sum_{i=1}^nf_ig_i$ and
$\langle f,g\rangle_{\Gamma_T}=\int_{\Gamma_T}\left(\sum_{i=1}^n
f_ig_i\right)\,dt$.

\section{A reformulation of $S_T$}\label{sec:reformulation_ST}
We start with a necessary condition
given by Maupertuis' principle of least action for the minimizer
$(T^*,\phi^*)$ of Problem II.
\begin{lemma}[\cite{Heymann_CPAM08}]
Let $(T^*,\phi^*)$ be the minimizer of Problem II. Then $\phi^*$ is located on
the surface $H(\phi,\frac{\partial L}{\partial\phi'})=0$, where $H$ is the
Hamiltonian given by the Legendre transform of $L(\phi,\phi'):=\tfrac{1}{2}
|\phi'-b(\phi)|^2$. More specifically,
for equation \eqref{eqn:sode}
\begin{equation}\label{eqn:H_constraint}
H(\phi,\frac{\partial L}{\partial\phi'})=0\quad\Longleftrightarrow\quad
|\phi'(t)|=|b(\phi(t))|,\quad\forall\, t.
\end{equation}
\end{lemma}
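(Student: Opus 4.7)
The plan is to combine two ingredients: (i) conservation of the Hamiltonian $H$ along critical paths of an autonomous Lagrangian, and (ii) a scaling identity obtained by varying the integration time $T$, which is available here precisely because Problem II also optimizes over $T$.

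First, I would compute $H$ explicitly. With $L(\phi,\phi')=\tfrac{1}{2}|\phi'-b(\phi)|^2$, the conjugate momentum is $p=\partial L/\partial\phi'=\phi'-b(\phi)$, and the Legendre transform gives
\[
H(\phi,p)=\langle p,\phi'\rangle-L=\tfrac{1}{2}|p|^2+\langle p,b(\phi)\rangle.
\]
Substituting $p=\phi'-b(\phi)$ and simplifying yields $H=\tfrac12|\phi'|^2-\tfrac12|b(\phi)|^2$, which immediately establishes the stated equivalence $H=0\Longleftrightarrow|\phi'(t)|=|b(\phi(t))|$. The remaining task is to show that this $H$ vanishes along $\phi^*$.

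Next, since $L$ has no explicit $t$-dependence, the Beltrami/energy identity applied to the Euler--Lagrange equation for $S_T$ gives that $H(\phi^*(t),p^*(t))\equiv c$ for some constant $c$ on $[0,T^*]$. To pin down $c=0$, I would exploit the freedom in $T$. For $\alpha>0$, define the rescaled path $\phi_\alpha(t):=\phi^*(t/\alpha)$ on $[0,\alpha T^*]$ and compute, via the substitution $s=t/\alpha$,
\[
S_{\alpha T^*}(\phi_\alpha)=\frac{1}{2\alpha}\int_0^{T^*}|{\phi^*}'|^2\,ds+\frac{\alpha}{2}\int_0^{T^*}|b(\phi^*)|^2\,ds-\int_0^{T^*}\langle{\phi^*}',b(\phi^*)\rangle\,ds,
\]
where the cross term is $\alpha$-independent. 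Since $(T^*,\phi^*)$ minimizes Problem II over all admissible integration times, $\alpha=1$ must be a stationary point of $\alpha\mapsto S_{\alpha T^*}(\phi_\alpha)$. Differentiating and setting the derivative to zero at $\alpha=1$ yields
\[
\int_0^{T^*}|{\phi^*}'|^2\,dt=\int_0^{T^*}|b(\phi^*)|^2\,dt,
\]
i.e., $\int_0^{T^*} H\,dt=0$. Combined with $H\equiv c$, this forces $c=0$, hence $|{\phi^*}'(t)|=|b(\phi^*(t))|$ for all $t\in[0,T^*]$.

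The main obstacle, apart from routine algebra, is justifying the energy-conservation step rigorously at the regularity we actually have on $\phi^*$ (only $H^1$-type a priori): one either appeals to Noether after a smoothing argument, or avoids conservation entirely by using \emph{inner variations} $t\mapsto t+\epsilon\eta(t)$ with $\eta(0)=\eta(T^*)=0$, whose stationarity directly produces the Du Bois--Reymond identity $H=\mathrm{const}$ in a distributional sense. Either route is standard but needs care given that $b$ is merely Lipschitz. Once this constancy is in hand, the scaling computation above upgrades the integral identity to the desired pointwise one.
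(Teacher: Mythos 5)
Your proof is correct, and it is essentially the standard Maupertuis-principle argument used in the cited reference \cite{Heymann_CPAM08}; the paper itself offers no proof of this lemma, only the citation. Your computation of $H=\tfrac12|\phi'|^2-\tfrac12|b(\phi)|^2$, the conservation of $H$ along extremals of the autonomous Lagrangian (with the Du Bois--Reymond inner-variation route correctly flagged as the rigorous way to get constancy at $H^1$ regularity with Lipschitz $b$), and the time-rescaling $\phi_\alpha(t)=\phi^*(t/\alpha)$ to force $\int H\,dt=0$ and hence $H\equiv 0$, together give a complete and faithful reconstruction of the argument; the only cosmetic caveat is that the resulting identity $|\phi^{*\prime}(t)|=|b(\phi^*(t))|$ holds a.e.\ (for the absolutely continuous representative) rather than literally for every $t$.
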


We will call equation \eqref{eqn:H_constraint} the zero-Hamiltonian
constraint in this paper.
The zero-Hamiltonian constraint defines a nonlinear mapping between the arc
length of the geometrically fixed lines on surface $H=0$ and time $t$
(see Section \ref{sec:escape} for more details). We instead consider a
linear time scaling on ${\Gamma_T}$, which is simpler and more flexible for numerical
approximation. For any given
transition path $\phi$ and a fixed $T$, we consider the change of variable
$s=t/T\in[0,1]=\Gamma_1$. Let $\phi(t)=\phi(sT)=:\bar{\phi}(s).$ Then
$\bar{\phi}'(s)=\phi'(t)T,$ and we rewrite the action functional as
\begin{equation}
S_T(\phi(t))=S_T(\bar{\phi}(s))=\frac{T}{2}\int_0^1\left|T^{-1}
\bar{\phi}'(s)-b(\bar{\phi}(s))\right|^2\,ds=:S(T,\bar{\phi}).
\end{equation}

\begin{lemma}\label{lem:linear_scaling}
For any given transition path $\phi$, we have
\begin{equation}
\hat{S}(\bar{\phi}):=S(\hat{T}(\bar{\phi}),\bar{\phi})
=\inf_{T\in\mathbb{R}^+}S(T,\bar{\phi}),
\end{equation}
if $\hat{T}(\bar{\phi})<\infty$, where
\begin{equation}\label{eqn:T_opt}
\hat{T}(\bar{\phi})=\frac{|\bar{\phi}'|_{0,\Gamma_1}}
{|b(\bar{\phi})|_{0,\Gamma_1}}.
\end{equation}
\end{lemma}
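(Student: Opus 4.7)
The plan is to reduce this to a one-variable calculus problem by expanding the square in $S(T,\bar{\phi})$ and treating $T$ as the only unknown while $\bar{\phi}$ is held fixed. Concretely, I will write
\begin{equation*}
S(T,\bar{\phi}) = \frac{1}{2T}|\bar{\phi}'|_{0,\Gamma_1}^2
- \int_0^1 \langle \bar{\phi}'(s), b(\bar{\phi}(s))\rangle\, ds
+ \frac{T}{2}|b(\bar{\phi})|_{0,\Gamma_1}^2,
\end{equation*}
where the cross term is independent of $T$. The problem thus becomes the minimization in $T>0$ of a function of the form $f(T) = \frac{A}{T} + BT + C$ with $A,B\geq 0$.

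Next I would apply the AM-GM inequality to the two $T$-dependent terms,
\begin{equation*}
\frac{1}{2T}|\bar{\phi}'|_{0,\Gamma_1}^2 + \frac{T}{2}|b(\bar{\phi})|_{0,\Gamma_1}^2
\;\ge\; |\bar{\phi}'|_{0,\Gamma_1}\,|b(\bar{\phi})|_{0,\Gamma_1},
\end{equation*}
with equality if and only if $\frac{1}{2T}|\bar{\phi}'|_{0,\Gamma_1}^2 = \frac{T}{2}|b(\bar{\phi})|_{0,\Gamma_1}^2$. Solving this equality for the positive root gives exactly $T = \hat{T}(\bar{\phi}) = |\bar{\phi}'|_{0,\Gamma_1}/|b(\bar{\phi})|_{0,\Gamma_1}$. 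As an alternative (or as a check), I would differentiate $f(T)$ with respect to $T$, obtain the stationarity condition $T^2 |b(\bar{\phi})|_{0,\Gamma_1}^2 = |\bar{\phi}'|_{0,\Gamma_1}^2$, and note that $f''(T)=|\bar{\phi}'|_{0,\Gamma_1}^2/T^3>0$, confirming a unique minimizer.

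The only subtlety I need to take care of is the non-degeneracy of the quotient defining $\hat{T}(\bar{\phi})$. The hypothesis $\hat{T}(\bar{\phi})<\infty$ rules out $|b(\bar{\phi})|_{0,\Gamma_1}=0$, so the denominator is positive. If $|\bar{\phi}'|_{0,\Gamma_1}=0$ the path is constant and both Problem I/II are degenerate, but the formula still gives $\hat{T}=0$ as a boundary minimizer; I would either exclude this trivial case or handle it separately. Plugging $T=\hat{T}(\bar{\phi})$ back into the expanded expression recovers the closed form $\hat{S}(\bar{\phi}) = |\bar{\phi}'|_{0,\Gamma_1}|b(\bar{\phi})|_{0,\Gamma_1} - \int_0^1 \langle \bar{\phi}',b(\bar{\phi})\rangle\, ds$, which will be useful in later sections. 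There is essentially no hard step here; the proof is a direct calculus-of-one-variable exercise, and the main thing to be careful about is simply justifying that the infimum is attained (rather than approached) at an interior positive $T$.
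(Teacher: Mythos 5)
Your proposal is correct and follows essentially the same route as the paper, which simply notes that $\hat{T}(\bar{\phi})$ is the unique solution of the optimality condition $\partial_T S(T,\bar{\phi})=0$; your expansion of the square, the AM--GM/stationarity computation, and the convexity check are just the details of that verification made explicit. The closed form $\hat{S}(\bar{\phi}) = |\bar{\phi}'|_{0,\Gamma_1}|b(\bar{\phi})|_{0,\Gamma_1} - \langle \bar{\phi}',b(\bar{\phi})\rangle_{\Gamma_1}$ you record at the end is indeed the expression the paper uses later in its semicontinuity arguments.
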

\begin{proof}
It is easy to verify that the functional $\hat{T}(\bar{\phi})$ is nothing but
the unique solution of the optimality condition $\partial_TS(T,\bar{\phi})=0$.
\end{proof}
\begin{corollary}\label{cor:T_opt}
Let $(T^*,\phi^*)$ be the minimizer of Problem II. If $T^*<\infty$, we have
$T^*=\hat{T}(\bar{\phi}^*)$, where $\bar{\phi}^*(s){:=}\phi^*(sT^*)$.
\end{corollary}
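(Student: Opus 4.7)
The plan is to reduce the corollary directly to Lemma \ref{lem:linear_scaling} by exploiting the fact that the change of variable $s=t/T$ is a bijection between admissible paths on $[0,T]$ and admissible paths on $[0,1]$, so Problem II can be rewritten as a \emph{joint} minimization over the pair $(T,\bar{\phi})$ of $S(T,\bar{\phi})$ subject to $\bar{\phi}(0)=x_1$, $\bar{\phi}(1)=x_2$. Once this reformulation is in place, the separation between the $T$-variable and the $\bar{\phi}$-variable does the rest.

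First I would observe that if $(T^*,\phi^*)$ minimizes Problem II, and we set $\bar{\phi}^*(s):=\phi^*(sT^*)$, then the pair $(T^*,\bar{\phi}^*)$ minimizes $S(T,\bar{\phi})$ over all admissible pairs, because the map $(T,\bar{\phi})\mapsto (T,\bar{\phi}(\cdot/T))$ is a value-preserving bijection between the two formulations. In particular, holding $\bar{\phi}^*$ fixed, $T^*$ must minimize the one-variable function $T\mapsto S(T,\bar{\phi}^*)$ over $T\in\mathbb{R}^+$.

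Next I would invoke Lemma \ref{lem:linear_scaling} applied to the path $\bar{\phi}^*$. That lemma says the unique critical point (hence the minimizer, whenever finite) of $T\mapsto S(T,\bar{\phi}^*)$ is $\hat{T}(\bar{\phi}^*)$, obtained from $\partial_T S(T,\bar{\phi}^*)=0$, namely
\begin{equation*}
\hat{T}(\bar{\phi}^*)=\frac{|\bar{\phi}^{*\prime}|_{0,\Gamma_1}}{|b(\bar{\phi}^*)|_{0,\Gamma_1}}.
\end{equation*}
Since by hypothesis $T^*<\infty$ and $T^*$ is a minimizer of the same one-variable function, the first-order optimality condition $\partial_T S(T^*,\bar{\phi}^*)=0$ must hold. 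Combined with the uniqueness statement in Lemma \ref{lem:linear_scaling}, this forces $T^*=\hat{T}(\bar{\phi}^*)$, which is the conclusion.

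The only step that requires a small amount of care is confirming that $\hat{T}(\bar{\phi}^*)$ is well defined and finite, i.e.\ that $|b(\bar{\phi}^*)|_{0,\Gamma_1}>0$. If it vanished, then $b(\bar{\phi}^*(s))=0$ for a.e.\ $s$, which together with continuity and the hypothesis that zeros of $b$ are isolated (Assumption \ref{assump:1}(3)) would force $\bar{\phi}^*$ to be constantly equal to a critical point of $b$, contradicting $x_1\neq x_2$ (the case $x_1=x_2$ being trivial with $T^*=0$ and $S=0$). This rules out the only degenerate case and completes the reduction; no serious obstacle arises beyond this sanity check.
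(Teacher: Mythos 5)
Your proof is correct, but it follows a genuinely different route from the paper's. The paper proves the corollary as a one-line consequence of Maupertuis' principle (the lemma imported from Heymann--Vanden-Eijnden): the zero-Hamiltonian constraint gives the \emph{pointwise} identity $|(\bar{\phi}^*)'(s)|=T^*|b(\bar{\phi}^*(s))|$ for every $s$, and taking $L^2$ norms of both sides yields $T^*=\hat{T}(\bar{\phi}^*)$ immediately. You instead avoid the Hamiltonian machinery entirely: you observe that the change of variable $s=t/T$ turns Problem II into a joint minimization of $S(T,\bar{\phi})$, so that $T^*$ must minimize the one-variable map $T\mapsto S(T,\bar{\phi}^*)$, and then the uniqueness of the critical point asserted in Lemma \ref{lem:linear_scaling} (the map is of the form $aT^{-1}+c+bT$, strictly convex for $a>0$) pins down $T^*=\hat{T}(\bar{\phi}^*)$. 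Your argument is more self-contained --- it uses only elementary calculus in $T$ and does not invoke the cited result from \cite{Heymann_CPAM08} --- and your check that $|b(\bar{\phi}^*)|_{0,\Gamma_1}>0$ via Assumption \ref{assump:1}(3) addresses a degeneracy the paper leaves implicit; there is also no circularity, since you rely only on the identity $S_T(\phi)=S(T,\bar{\phi})$ and Lemma \ref{lem:linear_scaling}, not on Lemma \ref{lem:opt_linear_scaling} (which itself cites this corollary). What the paper's route buys in exchange is the stronger pointwise information that the minimizer lies on the surface $H=0$, which it reuses later in Section \ref{sec:escape}; your integrated first-order condition recovers only the ratio of $L^2$ norms, which is exactly what the corollary needs but nothing more.
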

\begin{proof}
From the zero-Hamiltonian constraint \eqref{eqn:H_constraint} and the definition of $\bar{\phi}$, we have
\[
|(\bar{\phi}^*)'|=|(\phi^*)'|T^*=|b(\phi^*)|T^*=|b(\bar{\phi}^*)|T^*.
\]
Integrating both sides on $\Gamma_1$, we have the conclusion.
\end{proof}

For any absolutely continuous path $\phi$, it is shown in Theorem 5.6.3 in
\cite{Dembo98} that $S_T$ can be written as
\begin{equation}\label{eqn:S_H1}
S_T(\phi)=\left\{
\begin{array}{rl}
S_T(\phi),&\phi\in H^1(\Gamma_T;\mathbb{R}^n),\\
\infty,&\textrm{otherwise}.
\end{array}
\right.
\end{equation}
This means that we can seek the MAP in the Sobolev space
$H^1(\Gamma_T;\mathbb{R}^n)$. From now on, we will use $H^1(\Gamma_T)$
to indicate $H^1(\Gamma_T;\mathbb{R}^n)$ if no ambiguity arises. The same
rule will be applied to other spaces such as $H_0^1(\Gamma;\mathbb{R}^n)$
and $L^2(\Gamma;\mathbb{R}^n)$.

We define the following two admissible sets consisting of transition paths:
\begin{align}
\cA_T&=\big\{\phi\in H^1(\Gamma_T)\,{:}\;\phi(0)=0,\,\phi(T)=x\big\},\\
\cA_1&=\big\{\bar{\phi}\in H^1(\Gamma_1)\,{:}\;\bar{\phi}(0)=0,\,\bar{\phi}(1)=x\big\},
\end{align}
where we let $x_1=0$ and $x_2=x$ just for convenience.
\begin{lemma}\label{lem:opt_linear_scaling}
If $T^*<\infty$, we have
\begin{equation}\label{eqn:LDP-wo-T}
S_{T^*}(\phi^*)=\hat{S}(\bar{\phi}^*)=\inf_{\bar{\phi}\in\cA_1}\hat{S}
(\bar{\phi}).
\end{equation}
and $T^*=\hat{T}(\bar{\phi}^*)$ $($see equation \eqref{eqn:T_opt}$)$, where 
$\phi^*(t)=\bar{\phi}^*(t/T^*)$ $($or $\bar{\phi}^*(s)=\phi^*(sT^*))$.
\end{lemma}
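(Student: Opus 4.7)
The plan is to combine Corollary~\ref{cor:T_opt} with Lemma~\ref{lem:linear_scaling} and a path-lifting argument that turns any $\bar{\phi}\in\cA_1$ into a competitor for Problem~II.

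First I would handle the equality $S_{T^*}(\phi^*)=\hat{S}(\bar{\phi}^*)$ together with the identification $T^*=\hat{T}(\bar{\phi}^*)$. The latter is essentially Corollary~\ref{cor:T_opt}, which uses the zero-Hamiltonian constraint on the MAP. Once this is in hand, unwinding the change of variable $s=t/T^*$ yields
\begin{equation*}
S_{T^*}(\phi^*)=S(T^*,\bar{\phi}^*)=S\bigl(\hat{T}(\bar{\phi}^*),\bar{\phi}^*\bigr)=\hat{S}(\bar{\phi}^*),
\end{equation*}
simply by the definition of $\hat{S}$.

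Next I would prove the variational equality $\hat{S}(\bar{\phi}^*)=\inf_{\bar{\phi}\in\cA_1}\hat{S}(\bar{\phi})$ by two one-sided bounds. One direction is immediate: since $\bar{\phi}^*\in\cA_1$, we have $\inf_{\bar{\phi}\in\cA_1}\hat{S}(\bar{\phi})\le \hat{S}(\bar{\phi}^*)$. For the reverse direction, given any $\bar{\phi}\in\cA_1$ with $\hat{T}(\bar{\phi})<\infty$, define the lifted path $\phi(t):=\bar{\phi}\bigl(t/\hat{T}(\bar{\phi})\bigr)$ on $[0,\hat{T}(\bar{\phi})]$. By construction $\phi\in\cA_{\hat{T}(\bar{\phi})}$, and the linear time scaling gives
\begin{equation*}
S_{\hat{T}(\bar{\phi})}(\phi)=S\bigl(\hat{T}(\bar{\phi}),\bar{\phi}\bigr)=\hat{S}(\bar{\phi}).
\end{equation*}
Since $(T^*,\phi^*)$ minimizes $S_T$ jointly over $T$ and admissible paths in Problem~II, we conclude
\begin{equation*}
S_{T^*}(\phi^*)\le S_{\hat{T}(\bar{\phi})}(\phi)=\hat{S}(\bar{\phi}),
\end{equation*}
and taking the infimum over $\bar{\phi}\in\cA_1$ yields $S_{T^*}(\phi^*)\le \inf_{\bar{\phi}\in\cA_1}\hat{S}(\bar{\phi})$. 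Combined with the first step, all three quantities agree.

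The main subtlety, rather than an outright obstacle, is handling paths $\bar{\phi}\in\cA_1$ for which $\hat{T}(\bar{\phi})=\infty$ (e.g.\ when $b(\bar{\phi})\equiv 0$ along $\Gamma_1$). On such paths $\hat{S}$ is not directly defined by Lemma~\ref{lem:linear_scaling}; I would simply extend by $\hat{S}(\bar{\phi}):=+\infty$ there, which is consistent with $\lim_{T\to\infty}S(T,\bar{\phi})$ whenever $\bar{\phi}(1)\ne\bar{\phi}(0)$, and such paths then contribute nothing to the infimum. The only other item worth verifying carefully is that the lifted path $\phi(t)=\bar{\phi}(t/\hat{T}(\bar{\phi}))$ lies in $H^1(\Gamma_{\hat T(\bar\phi)})$ so that \eqref{eqn:S_H1} does not collapse $S_T$ to $+\infty$; this is immediate from the chain rule since $\hat{T}(\bar{\phi})$ is a finite positive constant.
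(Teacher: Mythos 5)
Your proposal is correct and follows essentially the same route as the paper: the paper likewise obtains $S_{T^*}(\phi^*)=\hat{S}(\bar{\phi}^*)$ by sandwiching via Lemma~\ref{lem:linear_scaling} and the identity $\inf_{T}\inf_{\phi\in\cA_T}S_T(\phi)=\inf_{\bar{\phi}\in\cA_1}\hat{S}(\bar{\phi})$ (your lifting $\phi(t)=\bar{\phi}(t/\hat{T}(\bar{\phi}))$ is exactly the content of that exchange of infima), and invokes Corollary~\ref{cor:T_opt} for $T^*=\hat{T}(\bar{\phi}^*)$; the paper additionally records the converse correspondence (a minimizer of $\hat{S}$ yields a minimizer of Problem~II), which is used later but is not part of the stated lemma. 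One caveat on your side remark about $\hat{T}(\bar{\phi})=\infty$: if $b(\bar{\phi})\equiv 0$ on $\Gamma_1$ then $S(T,\bar{\phi})=\tfrac{1}{2T}|\bar{\phi}'|^2_{0,\Gamma_1}\to 0$ as $T\to\infty$, not $+\infty$, so setting $\hat{S}:=+\infty$ there is \emph{not} consistent with $\inf_T S(T,\bar{\phi})$ and would be dangerous if such paths existed; the case is in fact vacuous, since $b\circ\bar{\phi}$ vanishing a.e.\ forces the continuous path $\bar{\phi}$ to sit inside the isolated zero set of $b$ (Assumption~\ref{assump:1}(3)), which is impossible when $\bar{\phi}(0)\neq\bar{\phi}(1)$. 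With that justification replaced, your argument is complete.
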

\begin{proof}
If $\left(T^*,\phi^*\right)$ is a minimizer of $S_T(\phi)$ and $T^*<\infty,$
then
\[
S_{T^*}(\phi^*)=\inf_{T\in\mathbb{R}^+}\inf_{\phi\in\mathcal{A}_T}S_T(\phi)
=\inf_{\bar{\phi}\in\mathcal{A}_1}\hat{S}(\bar{\phi})\leq\hat{S}(\bar{\phi}^*),
\]
and
\[
\hat{S}(\bar{\phi}^*)=\inf_{T\in\mathbb{R}^+}S(T,\bar{\phi}^*)
=\inf_{T\in\mathbb{R}^+}S_T(\phi^*)\leq S_{T^*}(\phi^*).
\]
Thus, $S_{T^*}(\phi^*)=S(T^*,\bar{\phi}^*)=\hat{S}(\bar{\phi}^*),$ that is,
$\bar{\phi}^*$ is a minimizer of $\hat{S}(\bar{\phi})$ for
$\bar{\phi}\in\mathcal{A}_1,$ and $T^*=\hat{T}(\bar{\phi}^*)$ from Corollary
\ref{cor:T_opt}.

Conversely, if $\bar{\phi}^*$ is a minimizer of $\hat{S}(\bar{\phi})$, we let
$T^*=\hat{T}(\bar{\phi}^*),$ and $\phi^*(t)=\bar{\phi}^*(\frac{t}{T^*}),$ for
$t\in[0,T^*].$ We have
\[
S_{T^*}(\phi^*)=S(\hat{T}(\bar{\phi}^*),\bar{\phi}^*)=\hat{S}(\bar{\phi}^*)
=\inf_{\bar{\phi}\in\mathcal{A}_1}\hat{S}(\bar{\phi})=\inf_{T\in\mathbb{R}^+}
\inf_{\phi\in\mathcal{A}_T}S_T(\phi),
\]
when $T^*<\infty$. Then $(T^*, \phi^*)$ is a minimizer of $S_T(\phi)$. So the
minimizers of $\hat{S}(\bar{\phi})$ and $S_T(\phi)$ have a one-to-one
correspondence when the optimal integral time is finite.
\end{proof}

Lemma \ref{lem:opt_linear_scaling} shows that for a finite $T^*$ we
can use equation \eqref{eqn:LDP-wo-T} instead of Problem II to approximate
the quasi-potential such that the optimization parameter $T$ is removed {and we obtain a new problem
\begin{equation}\label{problem_reformulation}
\hat{S}(\bar{\phi}^*)=\inf_{\substack{\bar{\phi}(0)=x_1,\\
\bar{\phi}(1)=x_2}}\hat{S}(\bar{\phi})
\end{equation}
that is equivalent to Problem II}.

\section{Finite element discretization of Problems I and II}
The numerical method to approximate Problems I and II {is} usually called
minimum action method (MAM) \cite{E_CPAM04}. Many versions of MAM have
been developed, where the action functional is discretized by either
finite difference method or finite element method. In this work, we
consider the finite element discretization of $S_T(\phi)$ and focus on
the convergence of the finite element approximation of the minimizer.

Let $\mathcal{T}_h$ and
$\overline{\mathcal{T}}_h$ be {partitions} of $\Gamma_T$ and $\Gamma_1$,
respectively. We define the following approximation spaces given by linear
finite elements:
\begin{align*}
\mathcal{B}_h&=\big\{\phi_h\in \mathcal{A}_T{\,:\:}\phi_h|_I
\text{ is affine {for} each } I\in\mathcal{T}_h\big\},\\
\overline{\mathcal{B}}_h&=\big\{\bar{\phi}_h\in\cA_1{\,:\:}\bar{\phi}_h|_I
\textrm{ is affine {for} each }I\in
\overline{\mathcal{T}}_h\big\}.
\end{align*}
For any $h,$ we define the following discretized action functionals:
\begin{equation}
S_{T,h}(\phi_h)=\bigg\{\begin{array}{ll}\frac{1}{2}\int_0^T |\phi_h'-
b(\phi_h)|^2\,dt, & \text{if }\phi_h\in\mathcal{B}_h\\
\infty, & \text{if } \phi_h\not\in\mathcal{B}_h,
\end{array}
\end{equation} and
\begin{equation}
\hat{S}_{h}(\bar{\phi}_h)=\bigg\{\begin{array}{ll}
\frac{\hat{T}(\bar{\phi}_h)}{2}\int_0^1 |
\frac{1}{\hat{T}(\bar{\phi}_h)}\bar{\phi}_h'-
b(\bar{\phi}_h)|^2\,dt,& \text{if }
\bar{\phi}_h\in\overline{\mathcal{B}}_h,\\
\infty, & \text{if } \bar{\phi}_h\not\in\overline{\mathcal{B}}_h.
\end{array}
\end{equation}

We note that for a fixed integration time $T$, we can rewrite $S_T(\phi)$
as $\hat{S}(\bar{\phi})$ by letting $T=\hat{T}$, such that Problem I can
also be defined on $\Gamma_1$. Since we intend to use the reformulation
$\hat{S}(\bar{\phi})$ to deal with the parameter $T$ in Problem II,
we use $\Gamma_T$ and $\Gamma_1$ to define Problem I and II, respectively,
for clarity.

\subsection{Problem I with a fixed $T$}
For this case, our main results are summarized in the following theorem:
\begin{theorem}\label{thm:cong_minima_ST}
For Problem I with a fixed $T$, we have
\[
\min_{\phi\in\mathcal{A}_T}S_T(\phi)=\lim_{h\rightarrow0}
\inf_{\phi_h\in\mathcal{B}_h}S_{T,h}(\phi_h),
\]
namely, the minima of $S_{T,h}$ converge to the minimum of $S_T(\phi)$
as $h\rightarrow0.$ Moreover, if $\{\phi_h\}\subset\mathcal{B}_h$ is a
sequence of minimizers of $S_{T,h},$ then there is a subsequence that
converges weakly in $H^1(\Gamma_T)$ to some $\phi\in\mathcal{A}_T,$
which is a minimizer of $S_T.$
\end{theorem}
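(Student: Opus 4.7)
My plan is to establish Theorem~\ref{thm:cong_minima_ST} via the framework of $\Gamma$-convergence together with an equi-coercivity estimate. Specifically, I would prove (i) equi-coercivity of the family $\{S_{T,h}\}$ on $H^1(\Gamma_T)$, and (ii) $\Gamma$-convergence of $S_{T,h}$ (extended by $+\infty$ outside $\mathcal{B}_h$) to $S_T$ with respect to the weak $H^1(\Gamma_T)$ topology. The fundamental theorem of $\Gamma$-convergence then simultaneously yields $\inf_{\phi_h\in\mathcal{B}_h} S_{T,h}(\phi_h)\to \min_{\phi\in\mathcal{A}_T} S_T(\phi)$ and the existence of a weakly convergent subsequence of discrete minimizers with limit in $\arg\min S_T$. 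Existence of a minimizer of $S_{T,h}$ for each $h$ is separate and follows from the fact that $\mathcal{B}_h$ is a finite-dimensional affine set and $S_{T,h}$ is continuous and coercive on it.

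\textbf{Equi-coercivity and liminf.} Suppose $\sup_h S_{T,h}(\phi_h)\le C$. Then $|\phi_h'-b(\phi_h)|_{0,\Gamma_T}^2\le 2C$. Combining this with the global Lipschitz bound $|b(x)|\le |b(0)|+K|x|$ (Remark~\ref{rmk:assump_2}) and the starting condition $\phi_h(0)=x_1$ through $|\phi_h(t)|\le |x_1|+\int_0^t|\phi_h'|\,ds$ yields, after a Gronwall-type closure, a uniform bound on $\|\phi_h\|_{1,\Gamma_T}$. The one-dimensional compact embedding $H^1(\Gamma_T)\hookrightarrow C^0(\Gamma_T)$ then gives a subsequence with $\phi_h\rightharpoonup \phi$ in $H^1$ and $\phi_h\to\phi$ uniformly, so that the boundary values persist and $\phi\in\mathcal{A}_T$. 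Uniform convergence and continuity of $b$ imply $b(\phi_h)\to b(\phi)$ in $L^2$, hence $\phi_h'-b(\phi_h)\rightharpoonup \phi'-b(\phi)$ in $L^2$, and weak lower semicontinuity of the $L^2$-norm delivers $\liminf_h S_{T,h}(\phi_h)\ge S_T(\phi)$.

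\textbf{Recovery sequence.} For the limsup inequality, given $\phi\in\mathcal{A}_T$, the candidate is the nodal piecewise linear interpolant $\phi_h=\Pi_h\phi\in\mathcal{B}_h$, which automatically matches the endpoint constraints. When $\phi$ is smooth (say $\phi\in H^2\cap\mathcal{A}_T$), standard one-dimensional interpolation estimates give $\|\Pi_h\phi-\phi\|_{1,\Gamma_T}\to 0$, and continuity of the Nemytskii map $\phi\mapsto b(\phi)$ on $C^0$ then yields $S_{T,h}(\Pi_h\phi)\to S_T(\phi)$. For a general $\phi\in\mathcal{A}_T$ I would first approximate in $H^1$ by smooth boundary-respecting paths, for example by writing $\phi=\ell+\psi$ with $\ell$ the affine path from $x_1$ to $x_2$ and $\psi\in H_0^1(\Gamma_T)$, mollifying $\psi$, and then extracting a diagonal subsequence in the mollification parameter and $h$ so that both the approximating path and its interpolant converge appropriately.

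\textbf{Principal difficulty.} The main obstacle is the recovery step: piecewise linear interpolation is not stable in $H^1$ without extra regularity, so one cannot interpolate a general $H^1$ path directly. The fix is the two-stage density/diagonal construction above, and the delicate point is controlling the action along the diagonal so that $\limsup S_{T,h}(\phi_h)\le S_T(\phi)$ passes cleanly. The equi-coercivity and liminf halves, though relying essentially on the global Lipschitz hypothesis from Remark~\ref{rmk:assump_2}, reduce to standard arguments once the one-dimensional Sobolev embedding is invoked.
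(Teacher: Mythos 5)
Your proposal follows essentially the same route as the paper: equi-coercivity via a Gronwall-type bound derived from the global Lipschitz continuity of $b$, the lim-inf inequality via compactness of the embedding of $H^1(\Gamma_T)$ to get strong $L^2$ convergence of $b(\phi_h)$ (your use of weak lower semicontinuity of the $L^2$-norm on $\phi_h'-b(\phi_h)$ is just a tidier packaging of the paper's term-by-term expansion of the square), and a recovery sequence built by interpolating a smooth (in the paper, $H^2$) approximant with a diagonal choice of $h$ relative to the approximation parameter. The argument is correct and no essential ingredient is missing.
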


The proof of this theorem will be split into two steps: (1) the existence
of the minimizer of $S_T(\phi)$ in $\mathcal{A}_T$, and (2)
$\Gamma$-convergence of $S_{T,h}$ to $S_T$ as $h\rightarrow0$.
\subsubsection{Solution existence in $\mathcal{A}_T$}
We search the minimizer of $S_T(\phi)$ in the admissible set $\cA_T$.
The solution existence is given by the following lemma.
\begin{lemma}\label{lem:sln_existence_fixed_T}
There exists at least one function $\phi^*\in\mathcal{A}_T$ such that
\[
S_T(\phi^*)=\min_{\phi\in\mathcal{A}_T}S_T(\phi).
\]
\end{lemma}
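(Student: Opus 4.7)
The plan is to apply the direct method of the calculus of variations: exhibit a minimizing sequence, extract a weakly convergent subsequence in $H^1(\Gamma_T)$, and verify weak lower semi-continuity of $S_T$. The infimum is obviously finite because the linear interpolant between $0$ and $x$ lies in $\mathcal{A}_T$ with finite action, so a minimizing sequence $\{\phi_n\}\subset\mathcal{A}_T$ with $S_T(\phi_n)\to\inf_{\mathcal{A}_T}S_T=:m$ exists.

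The main obstacle, and really the only nontrivial step, is establishing a uniform $H^1$ bound on $\{\phi_n\}$. Set $w_n=\phi_n'-b(\phi_n)$, so $\|w_n\|_{0,\Gamma_T}^2=2S_T(\phi_n)$ is bounded. First I would bound $\|\phi_n\|_{L^\infty}$ by replaying the energy argument used in the proof of Lemma~\ref{lem:for_assump_2}: writing $\tfrac{d}{dt}|\phi_n|^2 = 2\langle b(\phi_n),\phi_n\rangle+2\langle w_n,\phi_n\rangle$ and using Cauchy's inequality with weight $\beta$, one obtains $\tfrac{d}{dt}|\phi_n|^2\le \tfrac{1}{2\beta}|w_n|^2$ whenever $|\phi_n|\ge R_2$, hence
\begin{equation*}
\|\phi_n\|_{L^\infty(\Gamma_T)}^2 \le R_2^2 + \frac{1}{\beta}S_T(\phi_n),
\end{equation*}
which is uniformly bounded. (Under Remark~\ref{rmk:assump_2}'s global Lipschitz reduction, one may alternatively use $|b(\phi_n)|\le|b(0)|+K|\phi_n|$ combined with Grönwall's inequality applied to $\phi_n' = b(\phi_n)+w_n$.) From Lipschitz continuity of $b$ we then get a uniform bound on $\|b(\phi_n)\|_{0,\Gamma_T}$, and since $\|\phi_n'\|_{0,\Gamma_T}\le\|b(\phi_n)\|_{0,\Gamma_T}+\|w_n\|_{0,\Gamma_T}$ we conclude $\|\phi_n\|_{1,\Gamma_T}\le C$.

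Next, by weak compactness in $H^1(\Gamma_T)$ and the compact embedding $H^1(\Gamma_T)\hookrightarrow C(\overline{\Gamma_T})$, there exist $\phi^*\in H^1(\Gamma_T)$ and a subsequence (still denoted $\phi_n$) with $\phi_n\rightharpoonup \phi^*$ in $H^1$ and $\phi_n\to\phi^*$ uniformly. Uniform convergence preserves the endpoint conditions, so $\phi^*\in\mathcal{A}_T$. Continuity of $b$ together with uniform convergence (and bounded-range/Lipschitz control) yields $b(\phi_n)\to b(\phi^*)$ strongly in $L^2(\Gamma_T)$; combined with $\phi_n'\rightharpoonup(\phi^*)'$ weakly in $L^2$, we get
\begin{equation*}
\phi_n'-b(\phi_n) \;\rightharpoonup\; (\phi^*)'-b(\phi^*) \quad\text{weakly in } L^2(\Gamma_T).
\end{equation*}

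Finally, since $\phi\mapsto\tfrac12\|\phi\|_{L^2}^2$ is weakly lower semi-continuous on $L^2(\Gamma_T)$,
\begin{equation*}
S_T(\phi^*) \le \liminf_{n\to\infty} S_T(\phi_n) = m = \inf_{\phi\in\mathcal{A}_T}S_T(\phi),
\end{equation*}
and the reverse inequality holds because $\phi^*\in\mathcal{A}_T$. Thus $\phi^*$ is the desired minimizer, completing the proof.
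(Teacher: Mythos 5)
Your proof is correct and follows the same overall strategy as the paper's (the direct method: minimizing sequence, uniform $H^1$ bound, weak compactness, weak lower semi-continuity), but the three technical ingredients are implemented differently enough to be worth noting. For the bound, the paper proves a genuine coercivity inequality $S_T(\phi)\geq \tfrac{1}{2}C_2^{-1}|\phi|_{1,\Gamma_T}^2-\tfrac{1}{2}C_1C_2^{-1}|b(0)|^2$ valid for \emph{every} $\phi\in\mathcal{A}_T$, by applying Gronwall to $\phi'=b(\phi)+g'$ with $g=\phi-\int_0^{\cdot}b(\phi)$; this stronger statement is reused later to get equi-coercivity of $S_{T,h}$, whereas your route through the $L^\infty$ estimate of Lemma~\ref{lem:for_assump_2} (or your parenthetical Gronwall variant, which is exactly the paper's computation) only bounds the particular minimizing sequence, which suffices here. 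For admissibility of the weak limit, the paper invokes Mazur's theorem (weak closedness of $H_0^1$), while you use the compact embedding $H^1(\Gamma_T)\hookrightarrow C(\overline{\Gamma_T})$ and uniform convergence to preserve the endpoints; both are standard and correct in one dimension. For lower semi-continuity, the paper cites the general theorem on convex integrands from Evans, while you prove it by hand: $b(\phi_n)\to b(\phi^*)$ strongly plus $\phi_n'\rightharpoonup(\phi^*)'$ weakly gives weak $L^2$ convergence of $\phi_n'-b(\phi_n)$, and then weak lower semi-continuity of the norm finishes it. This is in fact the same mechanism the paper deploys later in the lim-inf half of Lemma~\ref{lem:GammaConvergence_ST}, so your argument makes the existence proof and the $\Gamma$-convergence proof more uniform, at the cost of not isolating a coercivity inequality that can be quoted later.
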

\begin{proof}
We first establish the coerciveness of $S_T(\phi)=\frac{1}{2}\int_0^T|
\phi'-b(\phi)|^2\,dt.$ In order to do so, we define an auxiliary function
$g$ by
\[
g(t)=\phi(t)-\int_0^t b(\phi(u))\,du.
\]
Then $g'=\phi'-b(\phi)$ and $g(0)=0.$ Since $b(x)$ is globally
Lipschitz continuous, we have
\begin{align*}
|\phi'(t)|&\leq |b(\phi(t))-b(0)|+|b(0)|+|g'|\\
&\leq K|\phi|+|b(0)|+|g'(t)|\\
&\leq K\int_0^t |\phi'(s)|\,ds+|b(0)|+|g'(t)|.
\end{align*}
By Gronwall's inequality, we have
\[
|\phi'(t)|\leq K\int_0^t(|b(0)|+|g'(s)|)e^{K(t-s)}\,ds+|b(0)|+|g'(t)|,
\]
from which we obtain
\[
|\phi|_{1,\Gamma_T}\leq C_1|b(0)|^2+C_2|g|_{1,\Gamma_T}^2,
\]
where $C_1$ and $C_2$ are two positive constants depending on $K$
and $T$.
Thus, the action functional satisfies
\[
S_T(\phi)=\frac{1}{2}|g|_{1,\Gamma_T}^2\geq \frac{1}{2}C_2^{-1}
|\phi|_{1,\Gamma_T}^2-\frac{1}{2}C_1C^{-1}_2|b(0)|^2.
\]
The coerciveness follows. On the other hand, the integrand
$|\phi'-b(\phi)|^2$ is bounded below by $0,$ and convex in $\phi'.$
By the Theorem 2 on Page 448 in \cite{Evans10}, $S_T(\phi)$ is weakly
lower semicontinuous on $H^1(\Gamma_T).$

For any minimizing sequence $\{\phi_k\}_{k=1}^{\infty},$ from the
coerciveness, we have
\[
\sup_k |\phi_k|_{1,\Gamma_T}<\infty.
\]
Let $\phi_0\in\mathcal{A}_T$ be any fixed function, e.g.{,} the linear
function on $\Gamma_T$ from $0$ to $x.$ Then $\phi_k-\phi_0\in
H_0^1(\Gamma_T),$ and
\begin{align*}
|\phi_k|_{0,\Gamma_T}&\leq|\phi_k-\phi_0|_{0,\Gamma_T}
+|\phi_0|_{0,\Gamma_T}\\
&\leq C_p|\phi_k-\phi_0|_{1,\Gamma_T}+|\phi_0|_
{0,\Gamma_T}<\infty,
\end{align*}
by the Poincar\'{e}'s Inequality. Thus $\{\phi_k\}_{k=1}^{\infty}$ is
bounded in $H^1(\Gamma_T).$ Then there exists a subsequence
$\{\phi_{k_j}\}_{j=1}^{\infty}$ converging weakly to some
$\phi^*\in H^1(\Gamma_T)$ in $H^1(\Gamma_T).$ Then
$\phi_{k_j}-\phi_0$ converges to $\phi^*-\phi_0$ weakly in
$H_0^1(\Gamma_T).$ By Mazur's Theorem \cite{Evans10},
$H_0^1(\Gamma_T)$ is weakly closed. So
$\phi^*-\phi_0\in H_0^1(\Gamma_T)$, i.e., $\phi^{*}\in\mathcal{A}_T.$

Therefore,
$S_T(\phi^*)\leq\liminf_{j\rightarrow\infty} S_T(\phi_{k_j})
=\inf_{\phi\in\mathcal{A}_T}S_T(\phi).$ Since
$\phi^{*} \in\mathcal{A}_T,$ we reach the conclusion.
\end{proof}

\subsubsection{$\Gamma$-convergence of $S_{T,h}$}
We first note the following simple property:
\begin{property}\label{prop:for_gamma}
For any sequence $\{\phi_h\}\subset\mathcal{B}_h$ converging weakly to
$\phi\in H^1(\Gamma_T)$, we have
\[
\lim_{h\rightarrow0}|b(\phi_h)-b(\phi)|_{0,\Gamma_T}=0.
\]
\end{property}
\begin{proof}
Since $\phi_h$ converges weakly to $\phi$ in $H^1(\Gamma_T)$,
$\phi_h\rightarrow\phi$ in $L^2(\Gamma_T)$, i.e., $\phi_h$ converges
strongly to $\phi$ in the $L^2$ sense. By the Lipschitz continuity of 
$b$, we reach the conclusion.
\end{proof}

We now establish the $\Gamma$-convergence of $S_{T,h}$:
\begin{lemma}[$\Gamma$-convergence of $S_{T,h}$]
\label{lem:GammaConvergence_ST}
Let $\{\mathcal{T}_h\}$ be a sequence of finite element meshes with
$h\rightarrow0$. For every $\phi\in\mathcal{A}_T$, the following two
properties hold:
\begin{itemize}
\item Lim-inf inequality: for every sequence $\{\phi_h\}$ converging
weakly to $\phi$ in $H^1(\Gamma_T),$ we have
\begin{equation}\label{GammaConvergence1_ST}
S_T(\phi)\leq\liminf_{h\rightarrow0}S_{T,h}(\phi_h).
\end{equation}
\item Lim-sup inequality: there exists a sequence
$\{\phi_h\}\subset\mathcal{B}_h$ converging weakly to $\phi$ in
$H^1(\Gamma_T),$ such that
\begin{equation}\label{GammaConvergence2_ST}
S_T(\phi)\geq\limsup_{h\rightarrow0}S_{T,h}(\phi_h).
\end{equation}
\end{itemize}
\end{lemma}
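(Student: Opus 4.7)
The plan is to prove the two half-inequalities of $\Gamma$-convergence separately, reducing each to a passage to the limit in the nonlinear drift $b(\phi_h)$ via the one-dimensional compact embedding $H^1(\Gamma_T)\hookrightarrow L^2(\Gamma_T)$ and Property \ref{prop:for_gamma}. For the lim-inf inequality \eqref{GammaConvergence1_ST}, I may restrict attention to subsequences with $\phi_h\in\mathcal{B}_h$, since otherwise $S_{T,h}(\phi_h)=\infty$ and the inequality is trivial. Given $\phi_h\rightharpoonup\phi$ weakly in $H^1(\Gamma_T)$, the compact embedding produces $\phi_h\to\phi$ strongly in $L^2$, and Property \ref{prop:for_gamma} then yields $b(\phi_h)\to b(\phi)$ strongly in $L^2$. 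Combining this with the weak convergence $\phi_h'\rightharpoonup\phi'$ in $L^2$ gives $\phi_h'-b(\phi_h)\rightharpoonup\phi'-b(\phi)$ weakly in $L^2$, and the weak lower semicontinuity of the squared $L^2$ norm closes the argument.

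For the lim-sup inequality \eqref{GammaConvergence2_ST}, I would construct an explicit recovery sequence via the nodal interpolant $\phi_h:=I_h\phi\in\mathcal{B}_h$. This is well-defined because $H^1(\Gamma_T)\hookrightarrow C^0(\Gamma_T)$ in one dimension, and the interpolant preserves the endpoint values $0$ and $x$, so $\phi_h\in\mathcal{B}_h$. On each element of $\mathcal{T}_h$, $(I_h\phi)'$ equals the elementwise mean of $\phi'$ (since $\phi$ is absolutely continuous in 1D), i.e., the $L^2$-projection $\Pi_h^0\phi'$ onto piecewise constants, so $(I_h\phi)'\to\phi'$ strongly in $L^2$ as $h\to 0$ by density of piecewise constants. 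Hence $\phi_h\to\phi$ strongly in $H^1(\Gamma_T)$; Property \ref{prop:for_gamma} again gives $b(\phi_h)\to b(\phi)$ in $L^2$; and therefore $S_{T,h}(\phi_h)\to S_T(\phi)$, which in particular implies \eqref{GammaConvergence2_ST}.

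The main technical point is the treatment of the nonlinear composition $b(\phi_h)$ in the lim-inf step, where only weak $H^1$ information is available along the sequence. The resolution rests on two ingredients already present in the paper: the one-dimensional compact embedding $H^1\hookrightarrow L^2$, which automatically upgrades weak $H^1$ to strong $L^2$ convergence, and the global Lipschitz continuity of $b$ standing after Remark \ref{rmk:assump_2}, packaged as Property \ref{prop:for_gamma}. Beyond this point the argument is routine: weak plus strong convergence gives weak convergence of the difference, and convexity of the squared $L^2$ norm closes the estimate. The lim-sup construction is correspondingly standard in 1D, since piecewise linear interpolation converges strongly in $H^1$ whenever the target lies in $H^1$.
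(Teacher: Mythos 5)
Your proof is correct. The lim-inf half is essentially the paper's argument in more compact form: the paper expands $\int_0^T|\phi_h'-b(\phi_h)|^2\,dt$ into $I_1+I_2+I_3$ and treats the three pieces separately (weak lower semicontinuity of $\int|\phi_h'|^2$, strong $L^2$ convergence of $b(\phi_h)$ via Property \ref{prop:for_gamma}, and a mixed term controlled by $\sup_h|\phi_h|_{1,\Gamma_T}<\infty$ plus weak convergence of $\phi_h'$); your observation that $\phi_h'-b(\phi_h)\rightharpoonup\phi'-b(\phi)$ weakly in $L^2$ and that $\|\cdot\|_{0,\Gamma_T}^2$ is weakly lower semicontinuous packages exactly the same three facts into one line. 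Where you genuinely diverge is the recovery sequence. The paper first approximates $\phi$ by some $u_\varepsilon\in H^2(\Gamma_T)$ by density, then interpolates $u_\varepsilon$ using the standard estimate $|\mathcal{I}_hu_\varepsilon-u_\varepsilon|_{1,\Gamma_T}\leq ch|u_\varepsilon|_{2,\Gamma_T}$, which forces a diagonal choice $h=h(\varepsilon)$ tied to $|u_\varepsilon|_{2,\Gamma_T}$. You instead interpolate $\phi$ itself and use the one-dimensional identity $(\mathcal{I}_h\phi)'=\Pi_h^0\phi'$ (elementwise mean of $\phi'$), so that $\mathcal{I}_h\phi\to\phi$ strongly in $H^1(\Gamma_T)$ for \emph{every} $\phi\in H^1$ without any extra regularity. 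Your route is more elementary and yields a canonical recovery sequence free of the $\varepsilon$--$h$ coupling; its only cost is that it leans on the one-dimensional structure (the mean-value identity for the interpolant and $H^1\hookrightarrow C^0$), whereas the paper's density-plus-interpolation-estimate template is the one that generalizes verbatim to higher-dimensional finite element settings. Both arguments are complete and both deliver strong (hence weak) $H^1$ convergence of the recovery sequence, which is all the lemma requires.
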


\begin{proof}
We first address the lim-inf inequality. We only need to consider
a sequence $\{\phi_h\}\subset\mathcal{B}_h,$ since otherwise,
\eqref{GammaConvergence1_ST} is trivial by the definition of
$S_{T,h}(\phi).$ Let $\{\phi_h\}\subset\mathcal{B}_h$ be an arbitrary 
sequence converging weakly to $\phi$ in $H^1(\Gamma_T)$. The action functional
can be written as
\begin{align}
&\int_0^T |\phi_h'-b(\phi_h)|^2\,dt\nonumber\\
&=\int_0^T |\phi_h'|^2\,dt+\int_0^T |b(\phi_h)|^2\,dt-
2\int_0^T \langle\phi_h',b(\phi_h)\rangle\,dt=I_1+I_2+I_3.
\label{thm3-2}
\end{align}

The functional defined by $I_1$ is obviously weakly lower
semicontinuous in $H^1(\Gamma_T)$ since the integrand is convex
with respect to $\phi'$.
%(see Theorem 2 on Page 448 in \cite{Evans10}).

For $I_2$ in equation \eqref{thm3-2}. Using Property
\ref{prop:for_gamma}, we have
\begin{align*}
\lim_{h\rightarrow0}|b(\phi_h)|_{0,\Gamma_T}
=|b(\phi)|_{0,\Gamma_T},
\end{align*}

For $I_3$ in equation \eqref{thm3-2}. We have
\begin{align*}
&\left|\int_0^T \langle\phi_h',b(\phi_h)\rangle\,dt
-\int_0^T \langle\phi',b(\phi)\rangle\,dt\right|\\
=&\left|\int_0^T \langle\phi_h',b(\phi_h)-b(\phi)
\rangle\,dt+\int_0^T \langle\phi_h'-\phi',b(\phi)\rangle\,dt\right|\\
\leq&|\phi_h|_{1,\Gamma_T}|b(\phi_h)
-b(\phi)|_{0,\Gamma_T}+|\langle\phi_h'-\phi',b(\phi)\rangle_
{\Gamma_T}|.
\end{align*}
Using Property \ref{prop:for_gamma} and the fact that
$\sup_h|\phi_h|_{1,\Gamma_T}<\infty$, we have {that} the first term
of the above inequality converges to $0$. Moreover, the second term
also converges to $0$ due to the weak convergence of $\phi_h$
to $\phi$ {in $H^1(\Gamma_T)$}. Thus,
\[
\lim_{h\rightarrow0}\int_0^T \langle\phi_h',b(\phi_h)
\rangle\,dt=\int_0^T \langle\phi',b(\phi)\rangle\,dt.
\]
Combining the results for $I_1$, $I_2$ and $I_3$, we obtain
\begin{align*}
&\liminf_{h\rightarrow0}\int_0^T |\phi_h'
-b(\phi_h)|^2\,dt\\
=&\liminf_{h\rightarrow0}\left[\int_0^T |\phi_h'|^2\,dt+\int_0^T |
b(\phi_h)|^2\,dt-2\int_0^T
\langle\phi_h',
b(\phi_h)\rangle\,dt\right]\\
=&\liminf_{h\rightarrow0}\int_0^T |\phi_h'|^2\,dt
+\lim_{h\rightarrow0}\int_0^T |b(\phi_h)|^2\,dt
-2\lim_{h\rightarrow0}\int_0^T
\langle\phi_h',b(\phi_h)\rangle\,dt\\
\geq&\int_0^T |\phi'|^2\,dt+\int_0^T |b(\phi)|^2\,dt
-2\int_0^T \langle\phi',b(\phi)\rangle\,dt\\
=&\int_0^T |\phi'-b(\phi)|^2\,dt,
\end{align*}
which yields the lim-inf inequality.

We now address the lim-sup inequality. Since $H^2(\Gamma_T)$ is
{dense} in $H^1(\Gamma_T)$, for any
$\phi\in H^1(\Gamma_T),$ and $\varepsilon>0$, there exists a
non-zero $u_\varepsilon\in H^2(\Gamma_T),$ such that
$\|\phi-u_\varepsilon\|_{{1},\Gamma_T}<\varepsilon.$ We have
\[
|\mathcal{I}_h u_\varepsilon-u_\varepsilon|_{1,\Gamma_T}
\leq ch|u_\varepsilon|_{2,\Gamma_T}\leq c\varepsilon,
\]
by letting
\[
h=h(\varepsilon)=\min\{\frac{\varepsilon}
{|u_\varepsilon|_{1,\Gamma_T}},\frac{\varepsilon}
{|u_\varepsilon|_{2,\Gamma_T}},\varepsilon\},
\]% $\varepsilon$ needed for $h$ itself to be small.
{where $\mathcal{I}_h$ is an interpolation operator defined by linear finite 
	elements.} 
Let $\phi_h=\mathcal{I}_h u_\varepsilon.$ Then we have
$\phi_h\in\mathcal{B}_h,$ and
\begin{align*}
|\phi_h-\phi|_{1,\Gamma_T}
\leq&|\phi_h-u_\varepsilon|_{1,\Gamma_T}
+|u_\varepsilon-\phi|_{1,\Gamma_T}\\
=&|\mathcal{I}_h u_\varepsilon-u_\varepsilon|_{1,\Gamma_T}
+|u_\varepsilon-\phi|_{1,\Gamma_T}\\
<&c\varepsilon+\varepsilon\rightarrow0,
\end{align*}
and
\begin{align*}
|\phi_h-\phi|_{0,\Gamma_T}
\leq&|\phi_h-u_\varepsilon|_{0,\Gamma_T}+|u_\varepsilon
-\phi|_{0,\Gamma_T}\\
=&|\mathcal{I}_h u_\varepsilon-u_\varepsilon|_{0,\Gamma_T}
+|u_\varepsilon-\phi|_{0,\Gamma_T}\\
\leq& ch|u_\varepsilon|_{1,\Gamma_T}+\varepsilon\\
<&c\varepsilon+\varepsilon\rightarrow0,
\end{align*}
as $\varepsilon\rightarrow0.$ So $\phi_h$ converges to $\phi$ in
$H^1(\Gamma_T),$ and also converges weakly in $H^1(\Gamma_T)$.
By Property \ref{prop:for_gamma}, we know that
$b(\phi_h)\rightarrow b(\phi)$ in $L_2(\Gamma_T)$.
Thus,
\[
\lim_{h\rightarrow0}S_{T,h}(\phi_h)=\lim_{h\rightarrow0}\frac{1}{2}
|\phi_h'-b(\phi_h)|_{0,\Gamma_T}^2=S_T(\phi),
\]
which yields the lim-sup equality.
\end{proof}

\subsubsection{Proof of Theorem \ref{thm:cong_minima_ST}}

{With the solution existence and the $\Gamma$-convergence being 
	proved, we only need the equi-coerciveness of $S_{T,h}$ for 
	the final conclusion. For any $\phi_h\in\mathcal{B}_h$, we have
	$S_{T,h}(\phi_h)=S_T(\phi_h).$ Then the equi-coerciveness of 
		$S_{T,h}$ in $\mathcal{B}_h$ follows from the coerciveness of 
		$S_T(\phi_h)$ restricted to $\mathcal{B}_h\subset\mathcal{A}_T$ 
		(see the first step in the proof of Lemma \ref{lem:sln_existence_fixed_T}).}

\subsection{Problem II with a finite $T^*$}
For this case, we consider the reformulation of $S_T$ given in
Section \ref{sec:reformulation_ST}. From Lemma
\ref{lem:opt_linear_scaling}, we know that Problem II with a finite
$T^*$ is equivalent to minimizing $\hat{S}$ in $\cA_1$ (see equation
\eqref{eqn:LDP-wo-T}). Our main results are summarized in the
following theorem:
\begin{theorem}\label{thm:cong_minima_Shat}
For Problem II with a finite $T^*$, we have
\[
\min_{\bar{\phi}\in\mathcal{A}_1}\hat{S}(\bar{\phi})
=\lim_{h\rightarrow0}
\inf_{\bar{\phi}_h\in\overline{\mathcal{B}}_h}\hat{S}_h(\bar{\phi}_h),
\]
namely, the minima of $\hat{S}_h$ converge to the minimum of
$\hat{S}$ as $h\rightarrow0.$ Moreover, if
$\{\bar{\phi}_h\}\subset\overline{\mathcal{B}}_h$ is a sequence of
minimizers of $\hat{S}_h,$ then there is a subsequence that
converges weakly in $H^1(\Gamma_1)$ to some
$\bar{\phi}\in\mathcal{A}_1,$ which is a minimizer of $\hat{S}.$
\end{theorem}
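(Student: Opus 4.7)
The plan is to follow the three-step architecture used in the proof of Theorem \ref{thm:cong_minima_ST}: (i) existence of a minimizer of $\hat{S}$ in $\mathcal{A}_1$, (ii) $\Gamma$-convergence of $\hat{S}_h$ to $\hat{S}$ with respect to weak $H^1(\Gamma_1)$ convergence, and (iii) equi-coerciveness of $\{\hat{S}_h\}$. Step (i) is already supplied by Lemma \ref{lem:opt_linear_scaling}: the hypothesis $T^*<\infty$ converts the minimizer $\phi^*$ of Problem II into a minimizer $\bar{\phi}^*(s)=\phi^*(sT^*)\in\mathcal{A}_1$ of $\hat{S}$, with $\hat{T}(\bar{\phi}^*)=T^*$.

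For step (ii), substituting $\hat{T}(\bar{\phi})=|\bar{\phi}'|_{0,\Gamma_1}/|b(\bar{\phi})|_{0,\Gamma_1}$ into $\hat{S}$ produces the compact form
\[
\hat{S}(\bar{\phi})=|\bar{\phi}'|_{0,\Gamma_1}|b(\bar{\phi})|_{0,\Gamma_1}-\langle\bar{\phi}',b(\bar{\phi})\rangle_{\Gamma_1},
\]
after which the proof of Lemma \ref{lem:GammaConvergence_ST} essentially carries over. Given $\bar{\phi}_h\rightharpoonup\bar{\phi}$ weakly in $H^1(\Gamma_1)$, Property \ref{prop:for_gamma} gives $b(\bar{\phi}_h)\to b(\bar{\phi})$ strongly in $L^2(\Gamma_1)$, so $|b(\bar{\phi}_h)|_{0,\Gamma_1}\to|b(\bar{\phi})|_{0,\Gamma_1}$ and the cross term passes to the limit exactly as $I_3$ does in Lemma \ref{lem:GammaConvergence_ST}; weak lower semicontinuity of $|\bar{\phi}_h'|_{0,\Gamma_1}$ combined with the strong convergence of the nonnegative factor $|b(\bar{\phi}_h)|_{0,\Gamma_1}$ yields weak lsc of the product, hence the lim-inf inequality. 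The recovery sequence is constructed as in Lemma \ref{lem:GammaConvergence_ST}: approximate $\bar{\phi}$ by $u_\varepsilon\in H^2(\Gamma_1)$ and set $\bar{\phi}_h=\mathcal{I}_h u_\varepsilon$ with $h=h(\varepsilon)$ sufficiently small, so that strong $H^1(\Gamma_1)$ convergence of $\bar{\phi}_h$ to $\bar{\phi}$ and Property \ref{prop:for_gamma} let every piece of $\hat{S}_h$ pass to its continuous counterpart. One technicality is that $\hat{T}$ requires $|b(\bar{\phi})|_{0,\Gamma_1}>0$, which is automatic for any nontrivial transition ($x_1\ne x_2$) and is preserved along the recovery sequence by strong $L^2$ convergence.

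The main obstacle is step (iii). Unlike the quadratic coerciveness of $S_T$ used in Lemma \ref{lem:sln_existence_fixed_T}, the functional $\hat{S}$ can stay bounded even when $|\bar{\phi}'|_{0,\Gamma_1}$ is very large, provided the cross term $\langle\bar{\phi}',b(\bar{\phi})\rangle_{\Gamma_1}$ almost saturates the Cauchy--Schwarz inequality, so a uniform $H^1$ bound \emph{cannot} come from the value of $\hat{S}$ alone. To bound the specific sequence of discrete minimizers $\bar{\phi}_h^*$, I plan to combine three ingredients: (a) an $L^\infty$ confinement of $\bar{\phi}_h^*$ in $B_{R_1}(0)$, obtained by adapting the Gronwall-type estimate of Lemma \ref{lem:for_assump_2} to discrete minimizers, which bounds $|b(\bar{\phi}_h^*)|_{0,\Gamma_1}$ uniformly from above; (b) the upper bound $\hat{S}_h(\bar{\phi}_h^*)\le\hat{S}_h(\mathcal{I}_h\bar{\phi}^*)\to\hat{S}(\bar{\phi}^*)$ furnished by the step (ii) recovery sequence; and (c) the defining identity $|\bar{\phi}_h^{*\prime}|_{0,\Gamma_1}=\hat{T}(\bar{\phi}_h^*)|b(\bar{\phi}_h^*)|_{0,\Gamma_1}$, together with a uniform upper bound on $\hat{T}(\bar{\phi}_h^*)$: if $\hat{T}(\bar{\phi}_h^*)\to\infty$ along a subsequence, the corresponding rescaled paths would approximate a limiting configuration forcing the optimal time to be infinite, contradicting $T^*<\infty$. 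These combine to give $\sup_h|\bar{\phi}_h^{*\prime}|_{0,\Gamma_1}<\infty$; weak $H^1$ compactness and Mazur's theorem (as in Lemma \ref{lem:sln_existence_fixed_T}) then deliver a subsequence converging weakly to some $\bar{\phi}\in\mathcal{A}_1$, which step (ii) identifies as a minimizer of $\hat{S}$, completing the proof.
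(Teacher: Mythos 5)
Your overall architecture --- existence of a minimizer of $\hat{S}$ in $\mathcal{A}_1$, $\Gamma$-convergence of $\hat{S}_h$, equi-coerciveness --- is exactly the paper's, and your step (ii) reproduces the paper's Lemma \ref{lem:GammaConvergence_Shat} essentially verbatim: the same rewriting $\hat{S}(\bar{\phi})=|\bar{\phi}'|_{0,\Gamma_1}|b(\bar{\phi})|_{0,\Gamma_1}-\langle\bar{\phi}',b(\bar{\phi})\rangle_{\Gamma_1}$, the same term-by-term limit analysis via Property \ref{prop:for_gamma}, and the same interpolation-based recovery sequence. One genuine (though repairable) flaw is your step (i): Lemma \ref{lem:opt_linear_scaling} \emph{presupposes} that Problem II attains its infimum at some $(T^*,\phi^*)$ and merely translates that minimizer into one of $\hat{S}$; it does not produce one. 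The paper proves attainment separately (Lemma \ref{lem:quasi_potential_finite_T_solution_existence}) by the direct method, using weak lower semicontinuity of $\hat{S}$ (the same computation as your lim-inf step) together with a coercivity inequality of the form $\hat{S}(\bar{\phi})\geq\tfrac{C_{\hat{T}}}{2}\bigl(C_2^{-1}|\bar{\phi}'|_{0,\Gamma_1}^2-C_1C_2^{-1}|b(0)|^2\bigr)$, valid on the set $\{\hat{T}(\bar{\phi})<M\}$ and obtained from a Gronwall argument plus the lower bound $\hat{T}\geq C_{\hat{T}}$ of Property \ref{prop:T_bound}. You need either to cite that lemma or to run the direct method yourself; as written, existence is assumed rather than proved.

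On step (iii) you correctly identify the real obstruction --- $\hat{S}$ is not coercive on all of $\mathcal{A}_1$ because the cross term can nearly saturate Cauchy--Schwarz --- and both you and the paper resolve it by the same pivotal claim, namely that minimizing sequences may be taken with $\hat{T}$ uniformly bounded on pain of contradicting $T^*<\infty$; your heuristic for this is no less (and no more) rigorous than the paper's one-line continuity argument. Granting that claim, your mechanism for the $H^1$ bound differs from the paper's: you combine the identity $|(\bar{\phi}_h^*)'|_{0,\Gamma_1}=\hat{T}(\bar{\phi}_h^*)\,|b(\bar{\phi}_h^*)|_{0,\Gamma_1}$ with an $L^\infty$ confinement in the spirit of Lemma \ref{lem:for_assump_2} to bound $|b(\bar{\phi}_h^*)|_{0,\Gamma_1}$, whereas the paper simply restricts its Gronwall-based coercivity inequality to $\overline{\mathcal{B}}_h\subset\mathcal{A}_1$, which needs only bounded action values and no confinement (recall Remark \ref{rmk:assump_2} reduces to global Lipschitz continuity, under which Assumption \ref{assump:1}(2) plays no further role). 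Your route is workable --- the confinement proof uses only finite action, so it applies to piecewise linear minimizers --- and is arguably more direct once the $\hat{T}$ bound is in hand, but the paper's version is the more economical given that the coercivity inequality has already been established for the existence step.
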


Similar to Problem I with a fixed $T$, we split the proof of this
theorem into two steps: (1) the existence of the minimizer of
$\hat{S}(\bar{\phi})$ in $\mathcal{A}_1$, and (2)
$\Gamma$-convergence of $\hat{S}_{h}$ to $\hat{S}$ as $h\rightarrow0$.

\subsubsection{Solution existence in $\mathcal{A}_1$}
We start from the following property of the functional $\hat{T}$.

\begin{property}\label{prop:T_bound}
There exists a constant $C_{\hat{T}}>0$ such that
\begin{equation}
\hat{T}(\bar{\phi})\geq C_{\hat{T}}
\end{equation}
for any $\bar{\phi}\in\mathcal{A}_1$.
\end{property}
\begin{proof}
For any $\bar{\phi}\in\mathcal{A}_1,$ let
$\bar{\phi}=\bar{\phi}_0+\bar{\phi}_L$, where
$\bar{\phi}_0\in H_0^1(\Gamma_1)$ and
$\bar{\phi}_L(s)=xs, s\in[0,1]$ {is} a linear function connecting $0$
and $x$. We have
\begin{align*}
\hat{T}(\bar{\phi})&=\frac{|\bar{\phi}_0'+x|_{0,\Gamma_1}}
{|b(\bar{\phi}_0+\bar{\phi}_L)|_{0,\Gamma_1}}\\
&\geq \frac{|\bar{\phi}_0'+x|_{0,\Gamma_1}}
{|b(\bar{\phi}_0+\bar{\phi}_L)-b(\bar{\phi}_L)|_{0,\Gamma_1}+
|b(\bar{\phi}_L)|_{0,\Gamma_1}}\\
&\geq \frac{|\bar{\phi}_0'+x|_{0,\Gamma_1}}{K|\bar{\phi}_0|
_{0,\Gamma_1}+|b(\bar{\phi}_L)|_{0,\Gamma_1}}\\
&\geq \frac{|\bar{\phi}_0'+x|_{0,\Gamma_1}}
{KC_p |\bar{\phi}_0'|_{0,\Gamma_1}+|b(\bar{\phi}_L)|_
{0,\Gamma_1}},
\end{align*}
where $C_p$ is the constant {for} Poincar\'{e}{'s} Inequality. So
\begin{align*}
\hat{T}(\bar{\phi})^2&\geq \frac{|\bar{\phi}_0'+
x|_{0,\Gamma_1}^2}{2K^2C_p^2
|\bar{\phi}_0'|
_{0,\Gamma_1}^2+2|b(\bar{\phi}_L)|_{0,\Gamma_1}^2}\\
&=\frac{|\bar{\phi}_0'+x|_{0,\Gamma_1}^2}{C_1 |\bar{\phi}_0'|
_{0,\Gamma_1}^2+C_2}\\
&=:J(\bar{\phi}_0)>0,
\end{align*}
where $C_1=2K^2C_p^2>0,$ and $C_2=2|b(\bar{\phi}_L)|_
{0,\Gamma_1}^2>0.$

Let $\delta\bar{\phi}\in H_0^1(\Gamma_1)$ be a perturbation function
with $\delta\bar{\phi}(0)=\delta\bar{\phi}(1)=0$. We have
\begin{align*}
&J(\bar{\phi}_0+\delta\bar{\phi})-J(\bar{\phi}_0)\\
=&\frac{|\bar{\phi}_0'+x+\delta\bar{\phi}'|^2
_{0,\Gamma_1}}{C_1|\bar{\phi}_0'+\delta\bar{\phi}'|^2
_{0,\Gamma_1}+C_2}-
\frac{|\bar{\phi}_0'+x|^2
_{0,\Gamma_1}}{C_1|\bar{\phi}_0'|^2_{0,\Gamma_1}+C_2}\\
=&\frac{|\bar{\phi}_0'+x+\delta\bar{\phi}'|^2
_{0,\Gamma_1}(C_1|\bar{\phi}_0'|^2_{0,\Gamma_1}+C_2)-
|\bar{\phi}_0'+x|^2
_{0,\Gamma_1}(C_1|\bar{\phi}_0'+\delta\bar{\phi}'|^2
_{0,\Gamma_1}+C_2)}{(C_1|\bar{\phi}_0'+\delta\bar{\phi}'|^2
_{0,\Gamma_1}+C_2)(C_1|\bar{\phi}_0'|^2_{0,\Gamma_1}+C_2)}\\
=&\frac{2\langle\bar{\phi}_0'+x,\delta\bar{\phi}'\rangle_{\Gamma_1}(C_1
|\bar{\phi}_0'|^2_{0,\Gamma_1}+C_2)-2C_1\langle\bar{\phi}_0',
\delta\bar{\phi}'\rangle_{\Gamma_1}|\bar{\phi}_0'+x|^2
_{0,\Gamma_1}}{(C_1|\bar{\phi}_0'|^2
_{0,\Gamma_1}+C_2)^2}+R(\bar{\phi}_0',x,\delta\bar{\phi}'),
\end{align*}
where $R$ is the remainder term of $O(|\delta\bar{\phi}|^2
_{1,\Gamma_1})$.

We then have the first-order variation of $J$ as
\[
\delta J=\frac{2\langle\bar{\phi}_0',\delta\bar{\phi}'\rangle_{\Gamma_1}
(C_1|\bar{\phi}_0'|^2_{0,\Gamma_1}+C_2-C_1|\bar{\phi}_0'+x|^2
_{0,\Gamma_1})}{(C_1|\bar{\phi}_0'|^2
_{0,\Gamma_1}+C_2)^2}.
\]
The optimality condition $\delta J=0$ yields two possible cases:
$\bar{\phi}_0'=0$ and $C_1|\bar{\phi}_0'|^2_{0,\Gamma_1}+C_2
=C_1|\bar{\phi}_0'+x|^2_{0,\Gamma_1}.$ For the first case,
$\bar{\phi}_0$ is a constant. But $\bar{\phi}_0\in H_0^1(\Gamma_1),$
so $\bar{\phi}_0=0.$ Then $J(0)=\frac{|x|^2}{C_2}>0.$ For the
second case, $J(\bar{\phi}_0)=\frac{1}{C_1}>0.$ Thus,
\[
\hat{T}^2(\bar{\phi})\geq \min\{\frac{|x|^2}{C_2},\frac{1}{C_1}\}.
\]
More specifically,
\[
\hat{T}(\bar{\phi})\geq C_{\hat{T}}:= \min\{\frac{|x|}{\sqrt{2}
|b(\bar{\phi}_L)|^2_{0,\Gamma_1}},\frac{1}{\sqrt{2}KC_p}\}>0.
\]
\end{proof}

We search the minimizer of $\hat{S}(\bar{\phi})$ in the admissible
set $\cA_1$. The solution existence is given by the following lemma.

\begin{lemma}\label{lem:quasi_potential_finite_T_solution_existence}
If the optimal integral time $T^*$ for Problem II is finite, there
exists at least one function $\phi^*\in\mathcal{A}_T$ such that
\[
S_{T^*}(\phi^*)=\min_{\substack{T\in\mathbb{R}^+,\\
\phi\in\mathcal{A}_T}}S_T(\phi)=\min_{\bar{\phi}\in\mathcal{A}_1}
\hat{S}(\bar{\phi}).
\]
\end{lemma}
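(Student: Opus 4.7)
The plan is to reduce existence for the reformulated functional $\hat{S}$ on $\mathcal{A}_1$ to the fixed-$T$ existence already proved in Lemma \ref{lem:sln_existence_fixed_T}, and then transport the resulting minimizer back via the linear-scaling correspondence of Lemma \ref{lem:opt_linear_scaling}. The hypothesis that the optimal integration time $T^*$ for Problem II is finite, together with the definition of $T^*$ as the optimal time, is read as saying that the outer infimum $\inf_{T>0}\inf_{\phi\in\mathcal{A}_T}S_T(\phi)$ is attained (in $T$) at some finite value $T^*$.

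First I would freeze $T=T^*$ and apply Lemma \ref{lem:sln_existence_fixed_T} on $\Gamma_{T^*}$ to obtain a path $\phi^*\in\mathcal{A}_{T^*}$ with $S_{T^*}(\phi^*)=\min_{\phi\in\mathcal{A}_{T^*}}S_{T^*}(\phi)$. By the optimality of $T^*$, this value must coincide with $\inf_{T,\phi}S_T(\phi)$, which gives the first equality in the lemma.

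Next I would transport this minimizer into the reformulated setting by setting $\bar{\phi}^*(s):=\phi^*(sT^*)$. Lemma \ref{lem:opt_linear_scaling}, together with Corollary \ref{cor:T_opt}, then yields $\bar{\phi}^*\in\mathcal{A}_1$, $\hat{T}(\bar{\phi}^*)=T^*$, and $\hat{S}(\bar{\phi}^*)=S_{T^*}(\phi^*)$. Since the same lemma supplies the identity $\inf_{\bar{\phi}\in\mathcal{A}_1}\hat{S}(\bar{\phi})=\inf_{T,\phi}S_T(\phi)$, this immediately closes the chain of equalities and identifies $\bar{\phi}^*$ as a minimizer of $\hat{S}$ on $\mathcal{A}_1$.

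The main delicate point I anticipate is justifying that the outer infimum over $T$ is actually attained at a finite $T^*$, i.e., that the phrase ``$T^*<\infty$'' really encodes attainment rather than mere finiteness of the value $V(x_1,x_2)$. A more self-contained alternative would apply the direct method directly to $\hat{S}$ on $\mathcal{A}_1$: take a minimizing sequence $\{\bar{\phi}_k\}$, use Property \ref{prop:T_bound} to bound $\hat{T}(\bar{\phi}_k)$ uniformly away from zero and derive an upper bound from $\hat{S}(\bar{\phi}_k)\le M$ by applying the coerciveness argument of Lemma \ref{lem:sln_existence_fixed_T} to the rescaled paths $\phi_k(t)=\bar{\phi}_k(t/\hat{T}(\bar{\phi}_k))$, extract a weakly $H^1(\Gamma_1)$-convergent subsequence, and exploit the compact embedding $H^1(\Gamma_1)\hookrightarrow C^0(\Gamma_1)$ to pass the nonlinear functional $\hat{T}(\cdot)$ to the limit. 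The hardest step in that route would be weak lower semicontinuity of $\hat{S}$, because the prefactor $\hat{T}(\bar{\phi})$ couples nonlinearly to $|\bar{\phi}'|_{0,\Gamma_1}$; I would address this by rewriting $\hat{S}$ in its Cauchy--Schwarz form $\hat{S}(\bar{\phi})=|\bar{\phi}'|_{0,\Gamma_1}|b(\bar{\phi})|_{0,\Gamma_1}-\langle\bar{\phi}',b(\bar{\phi})\rangle_{\Gamma_1}$, so that strong $L^2$ convergence of $b(\bar{\phi}_k)$ combined with weak lower semicontinuity of $|\bar{\phi}'|_{0,\Gamma_1}$ (and weak convergence of the inner product) delivers the desired inequality.
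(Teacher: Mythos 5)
Your proposal is essentially correct, but your primary route and the paper's proof are genuinely different. The paper runs the direct method on $\hat{S}$ over $\mathcal{A}_1$ from scratch: it proves weak lower semicontinuity by rewriting $\hat{S}(\bar{\phi})=|\bar{\phi}'|_{0,\Gamma_1}|b(\bar{\phi})|_{0,\Gamma_1}-\langle\bar{\phi}',b(\bar{\phi})\rangle_{\Gamma_1}$ (exactly the Cauchy--Schwarz form you identify in your backup plan), proves coercivity via Gronwall plus the lower bound $\hat{T}\geq C_{\hat{T}}$ of Property \ref{prop:T_bound}, extracts a weakly convergent subsequence, uses Mazur's theorem to keep the limit in $\mathcal{A}_1$, and only then transports back to $\mathcal{A}_{T^*}$ via Lemma \ref{lem:opt_linear_scaling}. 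Crucially, the paper uses the hypothesis $T^*<\infty$ only in the weaker form that minimizing sequences may be restricted to $\hat{T}(\bar{\phi}_k)<M$ (arguing that otherwise $\hat{T}(\bar{\phi}_k)\to\infty$ would force $T^*=\infty$ by continuity of $S(T,\bar{\phi})$ in $T$); it does \emph{not} assume the outer infimum over $T$ is attained. Your primary route assumes exactly that attainment, which lets you freeze $T=T^*$, invoke Lemma \ref{lem:sln_existence_fixed_T}, and finish in a few lines --- a real shortcut, but one that shifts the entire burden onto the interpretation of ``$T^*$ is finite,'' and risks circularity since the lemma is itself the existence statement that would normally justify speaking of an attained $T^*$. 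You correctly flag this as the delicate point, and your self-contained alternative is, step for step, the paper's argument (Cauchy--Schwarz form for lower semicontinuity, Property \ref{prop:T_bound} plus the rescaled Gronwall bound for coercivity, weak compactness and Mazur to close). The only superfluous element in your sketch is the appeal to the compact embedding $H^1(\Gamma_1)\hookrightarrow C^0(\Gamma_1)$ to pass $\hat{T}(\cdot)$ to the limit: once $\hat{S}$ is written in the Cauchy--Schwarz form, $\hat{T}$ no longer appears explicitly and strong $L^2$ convergence of $\bar{\phi}_k$ (from Rellich) already suffices, which is how the paper avoids that step.
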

\begin{proof}
We first establish the weakly lower semi-continuity of
$\hat{S}(\bar{\phi})$ in $H^1(\Gamma_1)$. Rewrite
$\hat{S}(\bar{\phi})$ by substituting \eqref{eqn:T_opt} to get
\begin{align*}
\hat{S}(\bar{\phi})&=\frac{\hat{T}(\bar{\phi})}{2}\int_0^1 \left|
\hat{T}^{-1}(\bar{\phi})\bar{\phi}'-b(\bar{\phi})\right|^2\,dt\\
&=|\bar{\phi}'|_{0,\Gamma_1}|b(\bar{\phi})|_{0,\Gamma_1}
-\langle\bar{\phi}',b(\bar{\phi})\rangle_{\Gamma_1}.
\end{align*}
For any sequence $\bar{\phi}_k$ {converging} weakly to $\bar{\phi}$
in $H^1(\Gamma_1)$, $\{\bar{\phi}_k'\}$ is bounded in
$L^2(\Gamma_1)$ and $\bar{\phi}_k\rightarrow\bar{\phi}$ in
$L^2(\Gamma_1)$. Coupling with the global Lipschitz continuity of $b$,
we can obtain
\begin{align*}
\lim_{k\rightarrow\infty}|b(\bar{\phi}_k)|_{0,\Gamma_1}^2&
=|b(\bar{\phi})|_{0,\Gamma_1}^2\\
\lim_{k\rightarrow\infty}\langle\bar{\phi}_k',b(\bar{\phi}_k)
\rangle_{\Gamma_1}&=\langle\bar{\phi}',b(\bar{\phi})
\rangle_{\Gamma_1}.
\end{align*}
The weakly lower semicontinuity of
$|\bar{\phi}'|_{0,\Gamma_1}$ yields that
\begin{equation}\label{thm:finite_T_3}
\liminf_{k\rightarrow\infty}|\bar{\phi}_k'|_{0,\Gamma_1}
\geq|\bar{\phi}'|_{0,\Gamma_1}.
\end{equation}
Combining the above results, we obtain
\begin{align*}
&\liminf_{k\rightarrow\infty}\hat{S}_{k}(\bar{\phi}_k)\\
=&\liminf_{k\rightarrow\infty}\left(|\bar{\phi}_k'|_
{0,\Gamma_1}|b(\bar{\phi}_k)|_{0,\Gamma_1}-
\langle\bar{\phi}_k',b(\bar{\phi}_k)\rangle_{\Gamma_1}\right)\\
=&\liminf_{k\rightarrow\infty}|\bar{\phi}_k'|_{0,\Gamma_1}
|b(\bar{\phi}_k)|_{0,\Gamma_1}-\lim_{k\rightarrow\infty}
\langle\bar{\phi}_k',b(\bar{\phi}_k)\rangle_{\Gamma_1}\\
\geq&|\bar{\phi}'|_{0,\Gamma_1}|b(\bar{\phi})|
_{0,\Gamma_1}-\langle\bar{\phi}',b(\bar{\phi})
\rangle_{\Gamma_1}\\
=&\hat{S}(\bar{\phi}),
\end{align*}
that is, $\hat{S}(\bar{\phi})$ is weakly lower semicontinuous
in $H^1(\Gamma_1).$

We subsequently establish the coercivity of $\hat{S}(\bar{\phi})$.
Since $T^*$ is finite, there exists $M\in(T^*,\infty),$ such that
\[
\inf_{\bar{\phi}\in\mathcal{A}_1}\hat{S}(\bar{\phi})=
\inf_{\substack{\bar{\phi}\in \mathcal{A}_1,\\
\hat{T}(\bar{\phi})<M}}\hat{S}(\bar{\phi}).
\]
In fact, {by} Lemma \ref{lem:opt_linear_scaling}, a minimizing
sequence $\{\bar{\phi}_k\}$ of $\hat{S}(\bar{\phi})$ defines a
minimizing sequence $\{(\hat{T}(\bar{\phi}_k),\bar{\phi}_k)\}$
of $S(T,\bar{\phi}),$ which also corresponds to a minimizing
sequence of $S_T(\phi)$.  {The assumption of $T^*<\infty$ 
allows us to add the condition that $\sup_k\hat{T}(\bar{\phi}_k)<M$. 
Otherwise, $\hat{T}(\bar{\phi}_k)$ must go to infinity. The continuity of 
$S(T,\bar{\phi})$ with respect to $T$ yields that $T^*=\infty$, which contradicts 
our assumption that $T^*<\infty$.}  
%So $\hat{T}(\bar{\phi}_k)\rightarrow T^*$,
%which means that we can add the constraint $\hat{T}(\bar{\phi})< M$.
Now, let $\hat{T}^{-1}(\bar{\phi}){\bar{\phi}'(s)}-b(\bar{\phi}(s))
=\bar{g}'(s).$ Then for any $\bar{\phi}\in\mathcal{A}_1$ with
$\hat{T}(\bar{\phi})<M,$
\begin{align*}
|\bar{\phi}'|&\leq|\hat{T}(\bar{\phi})||b(\bar{\phi})|
+|\hat{T}(\bar{\phi})||g'|\\
&\leq M|b(\bar{\phi})|+M|\bar{g}'|\\
&\leq MK|\bar{\phi}|+M|b(0)|+M|\bar{g}'|\\
&\leq MK\int_0^s|\bar{\phi}'(u)|\,du+M|b(0)|+M|\bar{g}'|.
\end{align*}
By Gronwall's Inequality, we have
\[
|\bar{\phi}'(s)|\leq \int_0^s M^2K(|\bar{g}'(u)|+|b(0)|)
e^{KM(s-u)}\,du+M|b(0)|
+M|\bar{g}'(s)|,
\]
which yields that
\begin{equation}\label{eqn:phi_g_bound}
|\bar{\phi}'|^2_{0,\Gamma_1}\leq C_1|b(0)|^2
+C_2|\bar{g}'|^2_{0,\Gamma_1},
\end{equation}
where $C_1,C_2\in(0,\infty)$ only depend on $M$ and $K.$ So
\begin{align*}
\hat{S}(\bar{\phi})&=\frac{\hat{T}(\bar{\phi})}{2}\int_0^1
\left|\hat{T}^{-1}
(\bar{\phi})\bar{\phi}'(s)-b(\bar{\phi}(s))\right|^2\,ds\\
&=\frac{\hat{T}(\bar{\phi})}{2}|\bar{g}'|^2_{0,\Gamma_1}\\
&\geq\frac{C_{\hat{T}}}{2}\left(\frac{1}{C_2}|\bar{\phi}'|^2
_{0,\Gamma_1}-\frac{C_1}{C_2}|b(0)|^2\right),
\end{align*}
where we used Property \ref{prop:T_bound} in the last step.
This is the coercivity.

For any minimizing sequence
$\{\bar{\phi}_k\}_{k=1}^{\infty}$ of $\hat{S}(\bar{\phi})$,
we have
\[
\sup_k|\bar{\phi}_k'|_{0,\Gamma_1}\leq\frac{2C_1}{C_T}|b(0)|^2
+\frac{2C_2}{C_T}\sup_k\{\hat{S}(\bar{\phi}_k)\}<\infty.
\]
Let $\bar{\phi}_0\in\mathcal{A}_1$. Then
\begin{align*}
|\bar{\phi}_k|_{0,\Gamma_1}&\leq|\bar{\phi}_k
-\bar{\phi}_0|_{0,\Gamma_1}
+|\bar{\phi}_0|_{0,\Gamma_1}\\
&\leq C_p|\bar{\phi}_k'-\bar{\phi}_0'|_{0,\Gamma_1}
+|\bar{\phi}_0|_{0,\Gamma_1}<\infty
\end{align*}
by Poincar\'{e}'s Inequality. Thus, $\{\bar{\phi}_k\}_{k=1}^{\infty}$
is bounded in $H^1(\Gamma_1).$ Then there is a subsequence
$\{\bar{\phi}_{k_j}\}_{j=1}^{\infty}$ converging to some
$\bar{\phi}^*\in H^1(\Gamma_1)$ weakly in $H^1(\Gamma_1).$ So
$\bar{\phi}_{k_j}-\bar{\phi}_0$ converges weakly to
$\bar{\phi}^*-\bar{\phi}_0$ {in} $H^1_0(\Gamma_1).$ By Mazur's
Theorem, $H^1_0(\Gamma_1)$ is weakly closed. So
$\bar{\phi}^*-\bar{\phi}_0\in H^1_0(\Gamma_1),$ and
$\bar{\phi}^*\in\mathcal{A}_1.$
By Lemma \ref{lem:opt_linear_scaling}, $\phi^*\in\mathcal{A}_T$
corresponding to $\bar{\phi}^*\in\mathcal{A}_1$ is a minimizer
of $S_T(\phi)$ {and $T^*=\hat{T}(\bar{\phi}^*)$}.
\end{proof}

\subsubsection{$\Gamma$-convergence of $\hat{S}_h$}
The $\Gamma$-convergence of $\hat{S}_h$ with respect to parameter
$h$ {is} established in the following lemma:
\begin{lemma}[$\Gamma$-convergence of $\hat{S}_h$]
\label{lem:GammaConvergence_Shat}
Let $\{\mathcal{T}_h\}$ be a sequence of finite element meshes.
For every $\bar{\phi}\in\mathcal{A}_1$, the following two properties
hold:
\begin{itemize}
\item Lim-inf inequality: for every sequence $\{\bar{\phi}_h\}$
converging weakly to $\bar{\phi}$ in $H^1(\Gamma_1),$ we have
\begin{equation}\label{GammaConvergence1_Shat}
\hat{S}(\bar{\phi})\leq\liminf_{h\rightarrow0}\hat{S}_{h}
(\bar{\phi}_h).
\end{equation}
\item Lim-sup inequality: there exists a sequence
$\{\bar{\phi}_h\}\subset\hat{\mathcal{B}}_h$ converging weakly
to $\bar{\phi}$ in $H^1(\Gamma_1),$ such that
\begin{equation}\label{GammaConvergence2_Shat}
\hat{S}(\bar{\phi})\geq\limsup_{h\rightarrow0}\hat{S}_{h}
(\bar{\phi}_h).
\end{equation}
\end{itemize}
\end{lemma}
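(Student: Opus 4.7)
The plan is to mirror the proof of Lemma \ref{lem:GammaConvergence_ST}, exploiting the equivalent representation
\[
\hat{S}(\bar{\phi}) = |\bar{\phi}'|_{0,\Gamma_1}|b(\bar{\phi})|_{0,\Gamma_1} - \langle \bar{\phi}', b(\bar{\phi})\rangle_{\Gamma_1}
\]
that was obtained in the proof of Lemma \ref{lem:quasi_potential_finite_T_solution_existence} by substituting \eqref{eqn:T_opt}. The same identity is valid for $\hat{S}_h(\bar{\phi}_h)$ whenever $\bar{\phi}_h \in \overline{\mathcal{B}}_h$, since then $\hat{T}(\bar{\phi}_h) = |\bar{\phi}_h'|_{0,\Gamma_1}/|b(\bar{\phi}_h)|_{0,\Gamma_1}$. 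Throughout, I shall use the straightforward analogue of Property \ref{prop:for_gamma} on $\Gamma_1$: if $\bar{\phi}_h$ converges weakly to $\bar{\phi}$ in $H^1(\Gamma_1)$, then $b(\bar{\phi}_h) \to b(\bar{\phi})$ strongly in $L^2(\Gamma_1)$, which uses only the compact embedding $H^1(\Gamma_1) \hookrightarrow L^2(\Gamma_1)$ and the global Lipschitz continuity of $b$.

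For the lim-inf inequality, I may assume $\{\bar{\phi}_h\}\subset \overline{\mathcal{B}}_h$, since otherwise $\hat{S}_h(\bar{\phi}_h) = \infty$ and \eqref{GammaConvergence1_Shat} is vacuous. Among the ingredients in the reformulation, $|\bar{\phi}_h'|_{0,\Gamma_1}$ is weakly lower semicontinuous while $|b(\bar{\phi}_h)|_{0,\Gamma_1} \to |b(\bar{\phi})|_{0,\Gamma_1}$; since both are non-negative, passing to a subsequence realizing the liminf of the product and, if needed, a further subsequence along which $|\bar{\phi}_h'|_{0,\Gamma_1}$ has a limit in $[|\bar{\phi}'|_{0,\Gamma_1},\infty]$ yields
\[
\liminf_{h\to 0}|\bar{\phi}_h'|_{0,\Gamma_1}|b(\bar{\phi}_h)|_{0,\Gamma_1} \geq |\bar{\phi}'|_{0,\Gamma_1}|b(\bar{\phi})|_{0,\Gamma_1}.
\]
For the cross term I perform the same split as in the proof of Lemma \ref{lem:GammaConvergence_ST},
\[
\langle \bar{\phi}_h', b(\bar{\phi}_h)\rangle_{\Gamma_1} = \langle \bar{\phi}_h', b(\bar{\phi}_h)-b(\bar{\phi})\rangle_{\Gamma_1} + \langle \bar{\phi}_h'-\bar{\phi}', b(\bar{\phi})\rangle_{\Gamma_1} + \langle \bar{\phi}', b(\bar{\phi})\rangle_{\Gamma_1},
\]
where the first piece vanishes by Cauchy--Schwarz together with $\sup_h |\bar{\phi}_h'|_{0,\Gamma_1} < \infty$ and the strong $L^2$ convergence of $b(\bar{\phi}_h)$, and the second vanishes by the definition of weak convergence of $\bar{\phi}_h'$. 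Combining these estimates produces \eqref{GammaConvergence1_Shat}.

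For the lim-sup inequality, I reuse the density/interpolation construction from the proof of Lemma \ref{lem:GammaConvergence_ST}. Given $\bar{\phi}\in\mathcal{A}_1$ and $\varepsilon>0$, I pick $u_\varepsilon \in H^2(\Gamma_1)$ satisfying $u_\varepsilon(0)=0$, $u_\varepsilon(1)=x$, and $\|\bar{\phi}-u_\varepsilon\|_{1,\Gamma_1}<\varepsilon$, then set $\bar{\phi}_h = \mathcal{I}_h u_\varepsilon \in \overline{\mathcal{B}}_h$ with $h=h(\varepsilon)$ chosen as in the earlier proof so that $|\mathcal{I}_h u_\varepsilon - u_\varepsilon|_{1,\Gamma_1} \leq c\varepsilon$. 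The endpoint constraints are preserved automatically by nodal interpolation, so that $\bar{\phi}_h\in\overline{\mathcal{B}}_h$. Then $\bar{\phi}_h \to \bar{\phi}$ strongly in $H^1(\Gamma_1)$, and a fortiori weakly. Strong $H^1$ convergence, combined with the Lipschitz property of $b$, gives convergence of each of the three terms in the explicit formula for $\hat{S}$, whence $\hat{S}_h(\bar{\phi}_h) \to \hat{S}(\bar{\phi})$ and \eqref{GammaConvergence2_Shat} follows.

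The only genuinely new ingredient compared with the $S_{T,h}$ case is handling the product $|\bar{\phi}'|_{0,\Gamma_1}|b(\bar{\phi})|_{0,\Gamma_1}$ in place of a sum of squares, and this is the step I expect to be the main obstacle. The rescue is that one factor is merely weakly lower semicontinuous while the other is non-negative and converges to a finite limit, so the liminf of the product is bounded below by the product of the limits. All other pieces---lower semicontinuity of the quadratic term, strong $L^2$ convergence of $b(\bar{\phi}_h)$ via Rellich, and the standard linear-interpolation error estimate---transfer verbatim from the proof of Lemma \ref{lem:GammaConvergence_ST}.
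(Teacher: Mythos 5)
Your proposal is correct and follows essentially the same route as the paper: rewrite $\hat{S}_h$ via the substitution of $\hat{T}$ to get $|\bar{\phi}_h'|_{0,\Gamma_1}|b(\bar{\phi}_h)|_{0,\Gamma_1}-\langle\bar{\phi}_h',b(\bar{\phi}_h)\rangle_{\Gamma_1}$, transfer the three convergence facts from the proof of Lemma \ref{lem:GammaConvergence_ST}, and reuse the interpolation construction for the lim-sup sequence. Your explicit justification of the product inequality $\liminf_h |\bar{\phi}_h'|_{0,\Gamma_1}|b(\bar{\phi}_h)|_{0,\Gamma_1}\geq|\bar{\phi}'|_{0,\Gamma_1}|b(\bar{\phi})|_{0,\Gamma_1}$ is the one step the paper leaves implicit, and you handle it correctly.
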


\begin{proof}
We first address the lim-inf inequality. We only consider
sequence $\{\bar{\phi}_h\}\subset\mathcal{B}_h,$ otherwise,
the inequality is trivial. Similar to the proof of Lemma
\ref{lem:quasi_potential_finite_T_solution_existence}, rewrite
the discretized functional as 
\begin{align*}
\hat{S}_{h}(\bar{\phi}_h)&=\frac{\hat{T}(\bar{\phi}_h)}{2}
\int_0^1 \left|\hat{T}^{-1}(\bar{\phi}_h)\bar{\phi}_h'
-b(\bar{\phi}_h)\right|^2\,dt\\
&=|\bar{\phi}_h'|_{0,\Gamma_1}|
b(\bar{\phi}_h)|_{0,\Gamma_1}-\langle\bar{\phi}_h',
b(\bar{\phi}_h)\rangle_{\Gamma_1}.
\end{align*}
By the same argument as in the proof of Lemma
\ref{lem:GammaConvergence_ST}, we have
\begin{align*}
\liminf_{h\rightarrow0}|\bar{\phi}_h'|_{0,\Gamma_1}
&\geq|\bar{\phi}'|_{0,\Gamma_1},\\
\lim_{h\rightarrow0}|b(\bar{\phi}_h)|_
{0,\Gamma_1}
&=|b(\bar{\phi})|_{0,\Gamma_1},\\
\lim_{h\rightarrow0}\langle\bar{\phi}_h',
b(\bar{\phi}_h)\rangle_{\Gamma_1}
&=\langle\bar{\phi}',b(\bar{\phi})\rangle_{\Gamma_1}.
\end{align*}
Combining these results, we have the lim-inf inequality.
The lim-sup inequality can be obtained by the same argument
as in the proof of Lemma \ref{lem:GammaConvergence_ST}
\end{proof}

\subsubsection{Proof of Theorem \ref{thm:cong_minima_Shat}}
{Similar to the proof of Theorem \ref{thm:cong_minima_ST}, the only thing 
	left is the verification of equi-coerciveness of $\hat{S}_{h}(\bar{\phi}_h)$, 
	which can be obtained directly from the coerciveness of $\hat{S}(\bar{\phi})$ 
    restricted onto $\bar{\mathcal{B}}_h\subset\mathcal{A}_1$ 
    (see the second step in the proof of Lemma \ref{lem:quasi_potential_finite_T_solution_existence}).}

\subsection{Problem II with an infinite $T^*$}
When $T^*$ is infinite, the integration domain becomes the whole real
space, corresponding to a degenerate case of linear scaling. To remove
the optimization parameter $T$, the zero-Hamiltonian constraint
\eqref{eqn:H_constraint} can be considered
under another assumption that the total arc length of $\phi^*$ is
finite, which is the basic idea of the geometric MAM (gMAM)
\cite{Heymann_CPAM08}. However, since the {Jacobian}
 {of the transform between time and arc length variables
 } will become singular
at critical points, the numerical accuracy will deteriorate when unknown
critical points exist along the MAP.

We will still work with the formulation with respect to time, which means
that we need to use a large but finite integration time to deal with
the {case} $T^*=\infty$. We discuss this case by considering a
relatively simple scenario, but the numerical difficulties are reserved.
Let $0\in D$ be an asymptotically stable equilibrium point, {$D$ is contained in the basin of attraction of $0$,} and
$\langle b(y),n(y)\rangle<0$ for any $y\in\partial D$, where $n(y)$ is the
exterior normal to the boundary $\partial D$. Then starting from any point in
$D$, a trajectory of system \eqref{eqn:dynamical_system} will
converge to $0$. We assume that the ending point $x$ of Problem II is
located on $\partial D$.

\subsubsection{Escape from the equilibrium point}\label{sec:escape}
If we consider the change of variable in general, say $\alpha=\alpha(t)$, we
have (see Lemma 3.1, Chapter 4 in \cite{FW})
\begin{equation}
S_T(\phi)\geq S(\tilde{\phi})=\int_{\alpha(0)}^{\alpha(T)}
(|\tilde{\phi}'||b|-\langle\tilde{\phi}',b\rangle)d\alpha,
\end{equation}
where $\tilde{\phi}(\alpha)=\phi(t(\alpha))$, $\tilde{\phi}'$ is the
derivative with respect to $\alpha$, and the equality holds if the zero-Hamiltonian 
constraint \eqref{eqn:H_constraint} {is satisfied}. With respect to $\alpha$, the zero-Hamiltonian constraint can be written as
\begin{equation}
|{\tilde{\phi}}'|\dot{\alpha}(t)=|b({\tilde{\phi}})|,
\end{equation}
{from which we have}
\begin{equation}
t=\int_0^{\alpha(t)}\frac{|{\tilde{\phi}}'|}{|b({\tilde{\phi}})|}d\alpha.
\end{equation}
If $|{\tilde{\phi}}'|\equiv cnst$, the variable $\alpha$ is nothing but a rescaled arc
length. Assuming that the length of the optimal curve is finite, we can rescale
the total arc length to one, i.e., $\alpha(T)=1$, which yields the geometric
minimum action method (gMAM) \cite{Heymann_CPAM08}.

We now look at any transition path $\tilde{\phi}(\alpha)=\phi(t(\alpha))$ that
satisfies the zero-Hamiltonian constraint. Let  $\alpha$
corresponds to the arc length {with $\tilde\phi(0)=0$, then $|\tilde{\phi}'|=1$}. 
Let $y$ be an arbitrary point on $\tilde{\phi}$.
Then the integration time from $0$ to $y$ is
\[
t=\int_0^{\alpha_y}\frac{1}{|b(\tilde{\phi})|}d\alpha,
\]
where $\alpha_y$ is the arc length of the curve connecting $0$ and $y${, i.e., the 
	value of the arc length variable $\alpha(t)$ at point $y$}. Note that if {the end 
	point $y$ is in a small neighborhood of the equilibrium $0$, the total arc length 
	from $0$ to $y$ along $\bar{\phi}$ is small}. {However, from the fact}
\[
|b(\tilde{\phi})|=|b(\tilde{\phi})-b(0)|\leq K|\tilde{\phi}|\leq K\alpha,
\] 
{we get}
\[
t=\int_0^{\alpha_y}\frac{1}{|b(\tilde{\phi})|}d\alpha\geq
\int_0^{\alpha_y}\frac{1}{K\alpha}d\alpha=\infty,
\]
as long as $\alpha_y>0$. So $T^*=\infty$, because $0$ is a critical point.

For clarity, we include the starting and ending points of the transition
path into some notations. Let $\phi^*_{y,x}$ indicate the minimizer of
Problem II with starting point $y$ and ending point $x$, and let
$T^*_{y,x}$ be the corresponding optimal integration time. We have
for any $y$ on $\phi_{0,x}^*$, $T_{0,y}^*=\infty$ and
$T_{y,x}^*<\infty$ for the exit problem as long as the $\phi_{y,x}^*$ has
a finite length.

\subsubsection{Minimizing sequence {for} $\hat{S}(\phi)$}\label{sec:min_sequence_T_inf}
Let
$\phi^L_{0,y}=y t$ be the linear function connecting 0 and $y$ in one
time unit {$T=1$}. Then
\begin{align*}
S_{T^*_{0,y}}(\phi^*_{0,y})\leq S_T(\phi^L_{0,y})&=
\frac{1}{2}\int_0^1\left|y-b(yt)\right|^2\,dt\\
&\leq\int_0^1(|y|^2+K^2|y|^2t^2)\,dt\leq C(K)\rho^2,
\end{align*}
where $|y|\leq \rho$. Although $T^*_{0,y}=\infty$ for any finite $\rho$, the action
$S_{T^*_{0,y}}(\phi^*_{0,y})$ decreases with respect to $\rho$ to zero.
We consider a sequence of optimization problems
\begin{equation}\label{eqn:linear_scaling_constraint}
\hat{S}(\bar{\phi}^{*,n}_{0,x})=
\inf_{\substack{\hat{T}(\bar{\phi})\leq n,\\
\bar{\phi}\in\mathcal{A}_1}} \hat{S}(\bar{\phi}), \quad n=1,2,3,\ldots
\end{equation}
generated by the constraint $\hat{T}(\bar{\phi})\leq n$. We have that
\begin{lemma}\label{lem:min_sequence}
$\{\bar{\phi}_{0,x}^{*,n}\}_{n=1}^\infty$ is a minimizing sequence{ of \eqref{problem_reformulation}}.
\end{lemma}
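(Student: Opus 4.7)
The plan is to show $\lim_{n\to\infty}\hat{S}(\bar{\phi}^{*,n}_{0,x})=V$, where $V:=\inf_{\bar{\phi}\in\mathcal{A}_1}\hat{S}(\bar{\phi})$. The feasible set $\{\bar{\phi}\in\mathcal{A}_1:\hat{T}(\bar{\phi})\leq n\}$ is nested and grows in $n$, so $\hat{S}(\bar{\phi}^{*,n}_{0,x})$ is monotonically non-increasing and bounded below by $V$; hence $L:=\lim_n\hat{S}(\bar{\phi}^{*,n}_{0,x})\geq V$ exists. The problem therefore reduces to producing, for each $\epsilon>0$, an admissible path $\bar{\phi}_\epsilon\in\mathcal{A}_1$ with $\hat{S}(\bar{\phi}_\epsilon)<V+\epsilon$ and $\hat{T}(\bar{\phi}_\epsilon)<\infty$, for then any $n\geq\hat{T}(\bar{\phi}_\epsilon)$ yields $\hat{S}(\bar{\phi}^{*,n}_{0,x})\leq\hat{S}(\bar{\phi}_\epsilon)<V+\epsilon$, giving $L\leq V+\epsilon$ and hence $L=V$.

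To build $\bar{\phi}_\epsilon$ I would use the concatenation idea foreshadowed in Section \ref{sec:min_sequence_T_inf}. Fix $\rho>0$ (to be chosen). Using the identity $V=\inf_{T,\phi\in\mathcal{A}_T}S_T(\phi)$ (which follows from Lemma \ref{lem:linear_scaling} together with the finiteness of $\hat{T}$ on $\mathcal{A}_1$ noted below), pick a finite $T$ and $\phi\in\mathcal{A}_T$ with $S_T(\phi)<V+\epsilon/2$. By the intermediate value theorem there is a first time $t_y\in(0,T)$ with $|\phi(t_y)|=\rho$ (provided $\rho<|x|$); set $y=\phi(t_y)$. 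Concatenate the linear bridge $\phi^L_{0,y}(t)=yt$ on $[0,1]$ with the shifted tail $\phi(\cdot+t_y-1)$ on $[1,1+T-t_y]$ to obtain an $H^1$ path $\phi_{\mathrm{c}}:[0,\tau]\to\mathbb{R}^n$ joining $0$ to $x$, where $\tau=1+T-t_y<\infty$. The estimate $S_1(\phi^L_{0,y})\leq C(K)\rho^2$ from the preceding discussion, together with $S_{T-t_y}(\phi|_{[t_y,T]})\leq S_T(\phi)<V+\epsilon/2$, yields $S_\tau(\phi_{\mathrm{c}})<C(K)\rho^2+V+\epsilon/2<V+\epsilon$ once $\rho^2<\epsilon/(2C(K))$. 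Setting $\bar{\phi}_\epsilon(s):=\phi_{\mathrm{c}}(s\tau)$ and invoking Lemma \ref{lem:linear_scaling} gives $\bar{\phi}_\epsilon\in\mathcal{A}_1$ with $\hat{S}(\bar{\phi}_\epsilon)\leq S_\tau(\phi_{\mathrm{c}})<V+\epsilon$.

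Finiteness of $\hat{T}(\bar{\phi}_\epsilon)$ is handled directly via formula \eqref{eqn:T_opt}: the numerator is obviously finite, while the denominator $|b(\bar{\phi}_\epsilon)|_{0,\Gamma_1}$ is strictly positive because, by the one-dimensional Sobolev embedding $H^1\hookrightarrow C^0$, $\bar{\phi}_\epsilon$ is a continuous non-constant path from $0$ to $x\neq 0$, and the isolated-critical-points condition in Assumption \ref{assump:1}(3) prevents $b\circ\bar{\phi}_\epsilon$ from vanishing on a full-measure subset of $\Gamma_1$. The main obstacle I anticipate is exactly this interplay between action and $\hat{T}$: since $T^*_{0,x}=\infty$ for the escape problem, generic minimizing sequences for $\hat{S}$ over the unconstrained set $\mathcal{A}_1$ may well have $\hat{T}$ tending to infinity (they spend arbitrarily long near $0$ where $|b|$ is small), so it is essential to exploit the explicit linear-bridge bound $S_1(\phi^L_{0,y})\leq C(K)\rho^2$ to trade a small controllable amount of action for a uniformly finite integration time, effectively short-circuiting the slow escape from the equilibrium.
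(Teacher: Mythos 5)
Your proof is correct, and it follows the same geometric construction as the paper -- concatenating a cheap linear bridge $\phi^L_{0,y}$ from the equilibrium to a point $y$ with $|y|=\rho$ (costing at most $C(K)\rho^2$ in action) with a finite-time tail from $y$ to $x$ -- but the two arguments differ in what they use for the tail, and the difference is not cosmetic. The paper takes the tail to be $\phi^*_{y_k,x}$, the restriction of the exact minimizer $\phi^*_{0,x}$ of Problem II beyond $y_k$, and invokes additivity of the action along the optimal path; this presupposes the existence of the global MAP with $T^*=\infty$, an object whose existence the paper only formally establishes afterwards in Theorem \ref{thm:gmam} (which in turn cites this very lemma), so the paper's version leans on F-W/gMAM theory to avoid circularity. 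You instead take the tail from an $\epsilon/2$-near minimizer with \emph{finite} $T$, which exists trivially from the definition of the double infimum, and you locate $y$ on it by the intermediate value theorem. This makes your argument more self-contained and logically cleaner, at the modest price of losing the paper's quantitative rate $\hat{S}(\phi^{*,n}_{0,x})\leq S_{T^*_{0,x}}(\phi^*_{0,x})+C(K)\rho_k^2$ tied to $n=\lceil T_k\rceil$. Two further points in your favor: you make explicit the reduction via monotonicity of the constrained infima and the identity $\inf_{\mathcal{A}_1}\hat{S}=\inf_T\inf_{\mathcal{A}_T}S_T$ (valid here because $\hat{T}<\infty$ on all of $\mathcal{A}_1$ when $x\neq 0$, by the isolated-zeros argument), and you verify $\hat{T}(\bar{\phi}_\epsilon)<\infty$ so that Lemma \ref{lem:linear_scaling} actually applies -- a step the paper leaves implicit. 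The only shared loose end is the tacit assumption that the constrained minimizers $\bar{\phi}^{*,n}_{0,x}$ in \eqref{eqn:linear_scaling_constraint} are attained, which neither proof addresses and which your argument could absorb by working with near-minimizers of the constrained problems as well.
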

\begin{proof}
First of all, $\hat{S}(\phi^{*,n}_{0,x})$
is decreasing as $n$ increases. Pick one $\rho$ such that $x\notin
{B_{\rho}(0)}$ and consider a sequence of $\rho_k=2^{-k}\rho$, $k=1,2,
\ldots$. Let 
$y_k$ be the first intersection point of $\phi_{0,x}^*$ and
{$B_{\rho_k}(0)$} when traveling along $\phi_{0,x}^*$ from $x$ to $0${, so $|y_k|=\rho_k$}. The optimal transition
time $T^*_{y_k,x}<\infty$. We construct a path from $0$ to $x$ as
follows:
\[
\phi_k=\left\{
\begin{array}{ll}
\phi^L_{0,y_k}, t\in[-T^*_{y_k,x}-1,-T^*_{y_k,x}],\\
\phi^*_{y_k,x},t\in[-T^*_{y_k,x},0].
\end{array}
\right.
\]
Due to the additivity, we know that $\phi_{y_k,x}^{*}$ is located on
$\phi_{0,x}^*$ since $y_k\in\phi_{0,x}^*$. Then $\{(T_k=T_{y_k,x}
^{*}+1,\phi_k)\}$ is a minimizing sequence as {$\rho_k$} decreases, and
\[
S_{T_k}(\phi_k)\leq S_{T^*_{0,x}}
(\phi^*_{0,x})+C(K)\rho_{{k}}^2.
\]
Consider $n=\lceil T_k\rceil$. We have
\[
\hat{S}(\phi_{0,x}^{*,n})\leq S_{T_k}(\phi_k)\leq
S_{T^*_{0,x}}{(\phi^*_{0,x})}+C(K)\rho_{{k}}^2,
\]
where the first inequality is because $\hat{T}(\phi_k)$ is, in general, not
equal to $T_k$.
We then reach the conclusion.
\end{proof}

When $T^*=\infty$, we  have $\hat{T}(\bar{\phi})\rightarrow\infty$ as
$\bar{\phi}$ goes close to the minimizer, implying that
$|\bar{\phi}'|_{0,\Gamma_1}\rightarrow\infty$. Thus for this case, we
cannot study the convergence in $H^1(\Gamma_1)$. A larger space is
needed, i.e., the space consisting of absolutely continuous functions. {So we use $\bar{C}_{x_1}^{x_2}(0,T)$, the space of absolutely continuous functions connecting 
	$x_1$ and $x_2$ on $[0,T]$ with $0<T\leq\infty$. To prove the convergence of the 
	minimizing sequence in Lemma \ref {lem:min_sequence}, we use the following lemma, 
	which is Proposition 2.1 proved in \cite{Heymann_CPAM08}}:

\begin{lemma}\label{lemma_gmam}
Assume that the sequence $\big((T_k,\phi_k)\big)_{k\in\mathbb{N}}$ with $T_k>0$ and $\phi_k\in\bar{C}_{x_1}^{x_2}(0,T_k)$ for every $k\in\mathbb{N}$, is a minimizing 
sequence of \eqref{eqn:quasi-potential} and that the lengths of the curves of $\phi_k$ 
are uniformly bounded, i.e.
\[
\lim_{k\rightarrow\infty}S_{T_k}(\phi_k)=V(x_1,x_2)\quad\text{and}
\quad\sup_{k\in\mathbb{N}}\int_0^{T_k}|\dot{\phi}_{k}(t)|\,dt<\infty.
\]
Then the the action functional $\hat{S}$ has a minimizer $\varphi^*$, and for some 
subsequence $(\phi_{k_l})_{l\in\mathbb{N}}$ we have that
\[
\lim_{l\rightarrow\infty}{d}(\phi_{k_l},\varphi^*)=0,
\]
where ${d}$ denotes the Fr\'echet distance.
\end{lemma}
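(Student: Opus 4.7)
The plan is to exploit the fact that $\hat{S}$, having been obtained by optimizing out the temporal parameterization, is a geometric functional invariant under increasing time-reparameterizations. This lets me pass from the given minimizing sequence $(T_k,\phi_k)$ to a sequence on the common interval $[0,1]$, where compactness methods apply.

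First, I would set $L_k:=\int_0^{T_k}|\dot{\phi}_k(t)|\,dt$, which is uniformly bounded by assumption, and reparameterize each $\phi_k$ by normalized arc length, producing $\tilde{\phi}_k:[0,1]\to\mathbb{R}^n$ with $|\tilde{\phi}_k'(s)|=L_k$ almost everywhere. Because $\phi_k$ and $\tilde{\phi}_k$ trace the same geometric locus, and because $\hat{S}$ bounds $S_{T_k}$ from below through the geometric inequality recalled in Section \ref{sec:escape}, the family $\{\tilde{\phi}_k\}$ is again a minimizing sequence for $\hat{S}$. The uniform bound on $L_k$ together with the common endpoints $x_1$ and $x_2$ makes $\{\tilde{\phi}_k\}$ uniformly Lipschitz and uniformly bounded, so Arzel\`a--Ascoli delivers a subsequence $\tilde{\phi}_{k_l}$ converging uniformly to some limit $\varphi^*\in\bar{C}_{x_1}^{x_2}(0,1)$; since uniform convergence of parameterized curves dominates Fr\'echet distance, we obtain $d(\phi_{k_l},\varphi^*)\to 0$ after identifying each $\phi_{k_l}$ with its reparameterization $\tilde{\phi}_{k_l}$.

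Second, I would verify that $\varphi^*$ is a minimizer by combining the geometric representation
\[
\hat{S}(\varphi)=\int_0^1\big(|\varphi'|\,|b(\varphi)|-\langle\varphi',b(\varphi)\rangle\big)\,d\tau
\]
with lower-semicontinuity under the convergence above. The uniform convergence $\tilde{\phi}_{k_l}\to\varphi^*$ and Lipschitz continuity of $b$ give $b(\tilde{\phi}_{k_l})\to b(\varphi^*)$ uniformly, while the uniform Lipschitz bound produces weak-$*$ convergence of the derivatives $\tilde{\phi}_{k_l}'$ in $L^\infty([0,1];\mathbb{R}^n)$. The integrand is convex and positively homogeneous in $\varphi'$ for each fixed $\varphi$, so Ioffe's lower-semicontinuity theorem applies and yields
\[
\hat{S}(\varphi^*)\le\liminf_{l\to\infty}\hat{S}(\tilde{\phi}_{k_l})\le\liminf_{l\to\infty}S_{T_{k_l}}(\phi_{k_l})=V(x_1,x_2).
\]
Since the reverse inequality is immediate from $\varphi^*\in\bar{C}_{x_1}^{x_2}(0,1)$, this identifies $\varphi^*$ as a minimizer.

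The main obstacle is the lower-semicontinuity step for the term $|\varphi'|\,|b(\varphi)|$, which is not jointly convex in $(\varphi,\varphi')$. The resolution is that convexity in $\varphi'$ alone together with strong convergence of $b(\tilde{\phi}_{k_l})$ is enough. A related subtlety is that the limit $\varphi^*$ may contain ``flat spots'' on which the curve lingers near critical points of $b$; these correspond precisely to the regions where the original time parameterization becomes singular and $T_{k_l}\to\infty$. This is exactly why the statement is phrased in terms of absolute continuity and Fr\'echet distance rather than $H^1$-convergence, and why the uniform bound on curve lengths, rather than on $T_k$ or on $|\dot{\phi}_k|_{L^2}$, is the right compactness hypothesis.
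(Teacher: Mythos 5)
You should first note that the paper itself does not prove this lemma: it is quoted verbatim as Proposition~2.1 of \cite{Heymann_CPAM08}, and your skeleton --- arc-length reparameterization onto $[0,1]$, Arzel\`a--Ascoli from the uniform length bound, uniform convergence controlling the Fr\'echet distance, and lower semicontinuity of $\int_0^1\bigl(|\varphi'|\,|b(\varphi)|-\langle\varphi',b(\varphi)\rangle\bigr)\,ds$ via convexity in $\varphi'$ together with strong convergence of $b(\tilde{\phi}_{k_l})$ --- is essentially the route taken in that reference. Those steps are sound, and your remark that joint convexity is not needed is correct. One caution on notation: the functional you work with is the pointwise geometric action of \cite{Heymann_CPAM08}, not the paper's own $\hat{S}(\bar{\phi})=|\bar{\phi}'|_{0,\Gamma_1}|b(\bar{\phi})|_{0,\Gamma_1}-\langle\bar{\phi}',b(\bar{\phi})\rangle_{\Gamma_1}$ from Section~\ref{sec:reformulation_ST}; by Cauchy--Schwarz these differ except on curves with $|\varphi'|/|b(\varphi)|$ constant, and the lemma concerns the former.

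The genuine gap is the sentence claiming that ``the reverse inequality is immediate from $\varphi^*\in\bar{C}_{x_1}^{x_2}(0,1)$.'' Your chain establishes $\hat{S}(\varphi^*)\le V(x_1,x_2)$, while admissibility of $\varphi^*$ only gives the trivial bound $\hat{S}(\varphi^*)\ge\inf_\varphi\hat{S}(\varphi)$. To conclude that $\varphi^*$ minimizes $\hat{S}$ you still need $V(x_1,x_2)\le\inf_\varphi\hat{S}(\varphi)$, i.e.\ that for every rectifiable curve $\varphi$ from $x_1$ to $x_2$ and every $\epsilon>0$ there is an admissible pair $(T,\phi)$ parameterizing $\varphi$ with $S_T(\phi)\le\hat{S}(\varphi)+\epsilon$. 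This is the nontrivial half of the time-parameterized/geometric equivalence: one must reparameterize so that the zero-Hamiltonian constraint $|\phi'|=|b(\phi)|$ of \eqref{eqn:H_constraint} holds along $\varphi$, and this construction degenerates wherever $b(\varphi)=0$ (the traversal time diverges, exactly as in Section~\ref{sec:escape}), so a cutoff or approximation argument near such points is required. In \cite{Heymann_CPAM08} this is a separate lemma proved before the compactness statement; without it, your argument shows only $\hat{S}(\varphi^*)\le V(x_1,x_2)$ and $\inf_\varphi\hat{S}(\varphi)\le V(x_1,x_2)$, which does not identify $\varphi^*$ as a minimizer.
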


\begin{theorem}\label{thm:gmam}
 Assume that the lengths of the curves $\phi_{0,x}^{*,n}$ 
are uniformly bounded. Then there exists a subsequence $\phi_{0,x}^{*,n_l}$ 
that converges to a minimizer
$\phi^*\in \bar{C}_{0}^{x}(0,T)$ with respect to Fr\'{e}chet distance.
\end{theorem}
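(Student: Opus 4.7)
The plan is to convert the sequence $\{\bar{\phi}_{0,x}^{*,n}\}$, which is parameterized on $\Gamma_1$, into a sequence $\{(T_n,\phi_n)\}$ parameterized by physical time, verify the hypotheses of Lemma \ref{lemma_gmam}, and then conclude by exploiting the reparameterization invariance of the Fr\'echet distance.

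Concretely, for each $n$ I would set $T_n := \hat{T}(\bar{\phi}_{0,x}^{*,n})$, which is finite by the constraint $\hat{T}(\bar{\phi}) \le n$ in \eqref{eqn:linear_scaling_constraint}, and define $\phi_n(t) := \bar{\phi}_{0,x}^{*,n}(t/T_n)$ for $t\in[0,T_n]$. By Lemma \ref{lem:linear_scaling} the identity $S_{T_n}(\phi_n) = \hat{S}(\bar{\phi}_{0,x}^{*,n})$ holds, and Lemma \ref{lem:min_sequence} then gives $\lim_n S_{T_n}(\phi_n) = V(0,x)$, so $\{(T_n,\phi_n)\}$ is a minimizing sequence for the quasi-potential problem \eqref{eqn:quasi-potential}. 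Next, since arc length is invariant under monotone reparameterization, one has $\int_0^{T_n}|\dot{\phi}_n(t)|\,dt = \int_0^1|(\bar{\phi}_{0,x}^{*,n})'(s)|\,ds$, so the hypothesis that the curves $\phi_{0,x}^{*,n}$ have uniformly bounded length is precisely the uniform-length bound required in Lemma \ref{lemma_gmam}.

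I then apply Lemma \ref{lemma_gmam} to $\{(T_n,\phi_n)\}$ to obtain a minimizer $\phi^*\in\bar{C}_0^x(0,T)$ and a subsequence $(\phi_{n_l})$ whose Fr\'echet distance to $\phi^*$ tends to zero. Because the Fr\'echet distance between two continuous curves depends only on their images and is invariant under monotone reparameterization of either argument, the same convergence conclusion transfers back to the original sequence $\{\bar{\phi}_{0,x}^{*,n_l}\}$ on $\Gamma_1$, giving the theorem.

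The main obstacle I anticipate is not any single analytic estimate but rather the bookkeeping between the two parameterizations: the optimization \eqref{eqn:linear_scaling_constraint} lives on $\Gamma_1$, whereas Lemma \ref{lemma_gmam} is phrased for paths on variable intervals $[0,T_k]$. The identity $S_{T_n}(\phi_n) = \hat{S}(\bar{\phi}_{0,x}^{*,n})$ from Lemma \ref{lem:linear_scaling} is the crucial bridge, and one also needs to verify that the constraint $\hat{T}(\bar{\phi}_{0,x}^{*,n}) \le n$ does not spoil the minimizing property. This last point is already handled in Lemma \ref{lem:min_sequence}, where an explicit comparison path is constructed whose action approaches $V(0,x)$ from above, squeezing $\hat{S}(\bar{\phi}_{0,x}^{*,n})$ down to $V(0,x)$ as $n\to\infty$.
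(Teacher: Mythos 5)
Your proposal is correct and follows essentially the same route as the paper: pass from $\bar{\phi}_{0,x}^{*,n}$ on $\Gamma_1$ to the time-parameterized paths via the optimal linear scaling $T_n=\hat{T}(\bar{\phi}_{0,x}^{*,n})$ (the one-to-one correspondence of Lemma \ref{lem:opt_linear_scaling}), invoke Lemma \ref{lem:min_sequence} for the minimizing-sequence property, and conclude with Lemma \ref{lemma_gmam}. Your version is in fact slightly more careful than the paper's (which writes the pair as $(n,\phi_{0,x}^{*,n})$ rather than using $\hat{T}(\bar{\phi}_{0,x}^{*,n})$ as the integration time), and the remarks on reparameterization invariance of arc length and Fr\'echet distance are accurate.
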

\begin{proof}
	{By Lemma \ref{lem:opt_linear_scaling}, we have a one-to-one correspondence between $\{\bar{\phi}_{0,x}^{*,n}\}_{n=1}^\infty$ 
		and $\{\phi_{0,x}^{*,n}\}_{n=1}^\infty$.} {So from} Lemma \ref{lem:min_sequence}, we know that $\{{(n,\phi_{0,x}^{*,n})}\}_{n=1}^\infty$ 
	defines a minimizing sequence{ of Problem II}. The convergence is a direct application of  
	Lemma \ref{lemma_gmam}. 
\end{proof}

Although we just
constructed $\{\phi\}_{0,x}^{*,n}$ for an exit problem around the neighborhood
of an asymptotically stable equilibrium, it is easy to see that the idea can
be applied to a global transition, say both $x_1$ and $x_2$ are asymptotically
stable equilibrium, as long as there exist finitely many critical points along
the minimal action path.
In equation \eqref{eqn:linear_scaling_constraint}, we {introduced} an extra
constraint $\hat{T}(\bar{\phi})\leq n$, which implies that the infimum may be
reached at the boundary $\hat{T}(\bar{\phi})=n$. From the optimization
point of view, such a box-type constraint is not favorable. Next, we will
show that this constraint is not needed for the discrete problem.

\subsubsection{Remove the constraint $\hat{T}(\bar{\phi})\leq n$ for a discrete
problem}
The key observation is as follows:
\begin{lemma}\label{lem:T_opt_discrete}
If $\bar{\phi}_h^*$ is the minimizer of $\hat{S}_h(\bar{\phi}_h)$ over
$\overline{\mathcal{B}}_h$, then $\hat{T}(\bar{\phi}_h^*){\le C_h}<\infty$, for {any given} $h$.
\end{lemma}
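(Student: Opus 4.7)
The plan is to argue by contradiction using the isolation of the zeros of $b$ from Assumption~\ref{assump:1}(3). Since $\bar\phi_h^*\in\overline{\mathcal B}_h$ is a continuous piecewise-linear function on the compact interval $\Gamma_1=[0,1]$ associated with a finite mesh, the seminorm $|\bar\phi_h^{*'}|_{0,\Gamma_1}$ is automatically finite. In view of the formula
\[
\hat T(\bar\phi_h^*)=\frac{|\bar\phi_h^{*'}|_{0,\Gamma_1}}{|b(\bar\phi_h^*)|_{0,\Gamma_1}},
\]
the statement therefore reduces to showing $|b(\bar\phi_h^*)|_{0,\Gamma_1}>0$; once this is in hand, one simply takes $C_h:=\hat T(\bar\phi_h^*)$.

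Suppose, toward a contradiction, that $|b(\bar\phi_h^*)|_{0,\Gamma_1}=0$. Then $b(\bar\phi_h^*(s))=0$ for almost every $s\in\Gamma_1$, and because $b$ is (globally) Lipschitz continuous and $\bar\phi_h^*$ is continuous, $b\circ\bar\phi_h^*$ is continuous, so in fact $b(\bar\phi_h^*(s))=0$ for every $s\in[0,1]$. Consequently, the image $\bar\phi_h^*(\Gamma_1)$ is a connected subset of the zero set $\{x\in\mathbb R^n:b(x)=0\}$, which is discrete by Assumption~\ref{assump:1}(3). A connected subset of a discrete set is a singleton, so $\bar\phi_h^*$ would have to be a constant function, contradicting the boundary conditions $\bar\phi_h^*(0)=x_1\neq x_2=\bar\phi_h^*(1)$. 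This yields $|b(\bar\phi_h^*)|_{0,\Gamma_1}>0$ and completes the argument for the stated bound.

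The step above is short once set up; the truly delicate point in the background, and the main obstacle, is the very existence of a minimizer $\bar\phi_h^*$ in the non-compact admissible set $\overline{\mathcal B}_h$ precisely when $T^*=\infty$, since a priori a discrete minimizing sequence could have $\hat T\to\infty$ even in the finite-dimensional space. I would address this by first observing that the same isolation-of-zeros argument guarantees $|b(\bar\phi_h)|_{0,\Gamma_1}>0$ for every $\bar\phi_h\in\overline{\mathcal B}_h$, so $\hat S_h$ is a well-defined continuous functional on $\overline{\mathcal B}_h$. Coercivity in the $H^1$-seminorm on the finite-dimensional affine slice $\overline{\mathcal B}_h$ can then be extracted from Assumption~\ref{assump:1}(2), which implies $|b(x)|\geq\beta|x|$ for $|x|\geq R_2$ and hence forces $|b(\bar\phi_h)|_{0,\Gamma_1}$, and in turn $\hat S_h(\bar\phi_h)$, to blow up as $\|\bar\phi_h\|_{1,\Gamma_1}\to\infty$. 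Bounded sublevel sets in a finite-dimensional space are compact, so the infimum is attained and the contradiction argument above applies to any such minimizer.
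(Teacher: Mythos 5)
Your proof is correct and follows essentially the same route as the paper: both arguments reduce the claim to showing $|b(\bar{\phi}_h^*)|_{0,\Gamma_1}>0$ and obtain this from the isolation of the zeros of $b$ in Assumption~\ref{assump:1}(3), after which $C_h:=\hat{T}(\bar{\phi}_h^*)$ does the job. If anything your write-up is tidier on two points --- you observe that the numerator $|(\bar{\phi}_h^*)'|_{0,\Gamma_1}$ is automatically finite for a piecewise-linear function on a finite mesh (whereas the paper rules out an infinite numerator via an inverse inequality together with Lemma~\ref{lem:for_assump_2}), and you spell out the connected-image-of-a-discrete-set argument that the paper leaves implicit when it says $b\circ\bar{\phi}_h^*\equiv 0$ ``contradicts (3) of Assumption~\ref{assump:1}''; your closing discussion of the existence of the discrete minimizer is a reasonable supplement but is not required, since the lemma assumes the minimizer is given.
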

\begin{proof}
We argue by contradiction. Note that
\[
\hat{T}_{{}}^2(\bar{\phi}_h^*)=\frac{|(\bar{\phi}_h^*)'|^2_{0,\Gamma_1}}
{| b(\bar{\phi}_h^*)|^2_{0,\Gamma_1}}
\leq C\frac{|\bar{\phi}_h^*|^2_{0,\Gamma_1}}
{|b(\bar{\phi}_h^*)|^2_{0,\Gamma_1}},
\]
where the last inequality is from the inverse inequality of finite
element discretization and the constant $C$ only depends on mesh
\cite{Ciarlet02}. If
$\hat{T}_{{}}(\bar{\phi}_h^*)=\infty$, we have two possible cases:
$|b(\bar{\phi}_h^*)|_{0,\Gamma_1}=0$ or
$|\bar{\phi}_h^*|_{0,\Gamma_1}=\infty$. The first case implies that {$b(\bar{\phi}_h^*(s))=0$ for all $s\in\Gamma_1$, which contradicts (3) of Assumption \ref{assump:1}}. The second case implies that $\bar{\phi}_h^*$ must go to infinity somewhere due to the continuity, which contradicts Lemma \ref{lem:for_assump_2}.
\end{proof}

Lemma \ref{lem:T_opt_discrete} means that for a discrete problem the
constraint $\hat{T}(\bar{\phi})\leq n$ in equation
\eqref{eqn:linear_scaling_constraint} is not necessary in the sense that
there always exists a {number} $n$ such that $\hat{T}(\bar{\phi}_h^*)<n$.
We can then consider a sequence $\{\overline{\mathcal{T}}_h\}$ of finite
element meshes and treat the minimization of $\hat{S}_h(\bar{\phi}_h)$ exactly
{in} the same way as the case that $T^*$ is finite. Simply speaking,
$\{(\hat{T}(\bar{\phi}_h^*),\bar{\phi}_h^*)\}$ defines a minimizing
sequence as $h\rightarrow0$ no matter that $T^*$ is finite or infinite.
The only difference between $T^*<\infty$ and $T^*=\infty$ is that we
address the convergence of $\bar{\phi}_h^*$ in $H^1(\Gamma_1)$ for $T^*<\infty$
and in $\bar{C}_{x_1}^{x_2}$ for $T^*=\infty$.

\subsubsection{The efficiency of $\{(\hat{T}(\bar{\phi}_h^*),
\bar{\phi}_h^*)\}$}
The {E-L} equation associated with $\hat{S}(\phi)$ is:
\begin{equation}\label{eqn:EL}
\hat{T}^{-2}(\bar{\phi})\bar{\phi}''+\hat{T}^{-1}(\bar{\phi})
\left((\nabla_{\bar{\phi}}b)^\mathsf{T}-\nabla_{\bar{\phi}}b\right)
\bar{\phi}'-(\nabla_{\bar{\phi}}b)^\mathsf{T}b=0.
\end{equation}
For a fixed $T$, the E-L equation associated with $S_T(\phi)$ is the same
as equation \eqref{eqn:EL} except that we need to replace $\hat{T}(\phi)$
with $T$ \cite{Wan_tMAM}. If $T^*=\infty$, $\hat{T}(\bar{\phi}_h^*)
\rightarrow\infty$ as $h\rightarrow0$, which means that the E-L equation
\eqref{eqn:EL} eventually becomes degenerate. When $h$ is small,
$\hat{T}(\phi_h^*)$ is finite but large. Equation \eqref{eqn:EL} can be
regarded as a singularly perturbed problem, which implies {the possible existence of} boundary/internal layers. Thus the minimizing sequence given by {a} uniform refinement  $\overline{\mathcal{T}}_h$ {of $\Gamma_1$} may not be effective.

Consider a transition from a stable fixed point to a saddle point without any
other critical points on the minimal transition path. The dynamics is slow
around the critical points and fast else where, which means that for the
approximation given by a fixed large $T$, the path will be mainly captured in
a subinterval $[a,b]\in\Gamma_1$ with $|b-a|\sim\mathit{O}(T^{-1})$ with
respect to the scaled time $s=t/T$. Then an effective
$\overline{\mathcal{T}}_h$ {has} a fine mesh for $[a,b]$ and a coarse mesh
for $[0,a]\cup[b,1]$. Currently, there exist two techniques to achieve an
effective nonuniform discretization $\overline{\mathcal{T}}_h$: (1) Moving
mesh technique. Starting from a fine uniform discretization, the grid points
will be redistributed such that more grid points will be moved from the
region of slow dynamics to the region of fast dynamics
\cite{Zhou_JCP08,Wan_MAM11}. This procedure needs to be iterated until the
optimal nonuniform mesh is reached with respect to a certain criterion
for the redistribution of grids. (2) Adaptive finite element method.
Starting from a coarse uniform mesh, $\overline{\mathcal{T}}_h$ will be
refined adaptively such that more elements will be put into the region
of fast dynamics and less elements into the region of slow dynamics
\cite{Wan_tMAM,Wan_tMAM_hp}. Numerical experiments have
shown that both techniques can recover the optimal convergence rate with
respect to the number of degrees of freedom.

\section{A priori error estimate for a linear ODE system}\label{sec:error}
In this section we apply our strategy to a linear ODE system
with $b(x)=-Ax$, where $A$ is {a} symmetric positive definite
matrix. Then $x=0$ is a global attractor. The E-L equation
associated with $\hat{S}(\bar{\phi})$ becomes
\begin{equation}\label{eqn:EL_simple}
-\hat{T}^{-2}(\bar{\phi})\bar{\phi}''+A^2\bar{\phi}=0,
\end{equation}
which is a nonlocal elliptic problem of Kirchhoff type. For
Problem I with a fixed $T$, i.e., $\hat{T}(\phi)=T$, equation
\eqref{eqn:EL_simple} becomes a standard diffusion-reaction
equation.

Let
$\bar{\phi}^*=\bar{\phi}_0^*+\phi_L\in \mathcal{A}_1$
be the minimizer of $\hat{S}(\bar{\phi})$, and
$\bar{\phi}_h^*=\bar{\phi}_{h,0}^*+
\phi_L\in\overline{\mathcal{B}}_h$ the
minimizer of $\hat{S}_h(\bar{\phi}_h)$, where
$\phi_L=x_1+{(x_2-x_1)}s$ is a linear function connecting $x_1$ and
$x_2$ on $\Gamma_1$. Let
\[
V_h =\{v{\,:\:}\,v|_I\textrm{ is affine for all }I\in
\overline{\mathcal{T}}_h,\,v(0)=v(1)=0\}\subset H_0^1(\Gamma_1).
\]
For a fixed $T$, equation \eqref{eqn:EL_simple} has a unique
solution and the standard argument {shows} that
\begin{equation}\label{eqn:prior_error_fixed_T}
\left|\bar{\phi}^*-\bar{\phi}^*_{h}\right|_{1,\Gamma_1}
=\left|\bar{\phi}^*_0-\bar{\phi}^*_{h,0}\right|_{1,\Gamma_{{1}}}
\leq CT^2\inf_{w\in V_h}\left|\bar{\phi}^*_0-
w\right|_{1,\Gamma_1}.
\end{equation}
If $T$ is large enough, equation \eqref{eqn:EL_simple} can be
regarded as a singularly perturbed problem, the best
approximation given by a uniform mesh cannot reach the optimal
convergence rate due to the existence of boundary layer.

We now consider Problem II with a finite $T^*$. The minimizer
$\bar{\phi}^*$ of $\hat{S}(\bar{\phi})$ satisfies the weak form
of equation \eqref{eqn:EL_simple}:
\begin{equation}\label{eqn:weak_c1}
\left\langle \left(\bar{\phi}^*\right)',
v'\right\rangle_{\Gamma_1}=-\hat{T}^{2}\left(\bar{\phi}^*\right)
\left\langle A\bar{\phi}^*,Av\right\rangle_{\Gamma_1},
\quad\forall\ v\in H_0^1(\Gamma_1).
\end{equation}
The minimizer $\bar{\phi}_h^*$ of $\hat{S}_h(\bar{\phi}_h)$
satisfies the discrete weak form:
\begin{equation}\label{eqn:weak_c2}
\left\langle
\left(\bar{\phi}^*_h\right)',v'\right\rangle_{\Gamma_1}=
-\hat{T}^{2}\left(\bar{\phi}^*_h\right)\left\langle
A\bar{\phi}_h^*,Av\right\rangle_{\Gamma_1},
\quad\forall\ v\in V_h{.}
\end{equation}
We have the following {a} priori error estimate for
Problem II with a finite $T^*$:
\begin{proposition}\label{lem:priori}
Consider a subsequence $\bar{\phi}_h^*$ converging weakly to $
\bar{\phi}^*$ in $H^1(\Gamma_1)$ {as $h\rightarrow0$}. Assume
that $\bar{\phi}^*$ and $\bar{\phi}_h^*$ satisfy equations
\eqref{eqn:weak_c1} and \eqref{eqn:weak_c2}, respectively. For
problem II with a finite $T^*$, there exists a
constant $C\sim(T^*)^2$ such that
\begin{equation}
\left|\bar{\phi}^*-\bar{\phi}^*_{h}\right|_{1,\Gamma_1}
=\left|\bar{\phi}^*_0-\bar{\phi}^*_{h,0}\right|_{1,\Gamma_1}
\leq C \inf_{w\in V_h}\left|\bar{\phi}^*_0-
w\right|_{1,\Gamma_1},
\end{equation}
when $h$ is small enough.
\end{proposition}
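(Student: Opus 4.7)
My plan is to compare $\bar{\phi}_h^*$ with an auxiliary linear Galerkin approximation obtained by freezing the nonlocal coefficient $\hat{T}^2(\cdot)$ at $(T^*)^2$, and then close the estimate by bootstrapping on the smallness of $|\hat{T}^2(\bar{\phi}_h^*)-(T^*)^2|$. The key observation is that \eqref{eqn:weak_c1} identifies $\bar{\phi}^*$ as the exact weak solution of the \emph{linear} elliptic problem on $\phi_L+H_0^1(\Gamma_1)$ with bilinear form
\[
a(u,v)=\langle u',v'\rangle_{\Gamma_1}+(T^*)^2\langle Au,Av\rangle_{\Gamma_1},
\]
which is coercive on $H_0^1(\Gamma_1)$ by Poincar\'{e}'s inequality and continuous with constant of order $1+(T^*)^2\|A\|^2$. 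Letting $\tilde{\phi}_h\in\phi_L+V_h$ denote the Galerkin approximation of this linear problem, C\'{e}a's lemma yields
\[
|\bar{\phi}^*-\tilde{\phi}_h|_{1,\Gamma_1}\leq C_1(T^*)^2\inf_{w\in V_h}|\bar{\phi}_0^*-w|_{1,\Gamma_1}.
\]

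Next I would estimate $|\tilde{\phi}_h-\bar{\phi}_h^*|_{1,\Gamma_1}$. Subtracting the defining equation of $\tilde{\phi}_h$ from \eqref{eqn:weak_c2} and testing with $v=\tilde{\phi}_h-\bar{\phi}_h^*\in V_h$ gives
\[
|(\tilde{\phi}_h-\bar{\phi}_h^*)'|_{0,\Gamma_1}^2+(T^*)^2|A(\tilde{\phi}_h-\bar{\phi}_h^*)|_{0,\Gamma_1}^2=\bigl(\hat{T}^2(\bar{\phi}_h^*)-(T^*)^2\bigr)\langle A\bar{\phi}_h^*,A(\tilde{\phi}_h-\bar{\phi}_h^*)\rangle_{\Gamma_1},
\]
whence $|\tilde{\phi}_h-\bar{\phi}_h^*|_{1,\Gamma_1}\leq C_2\,|\hat{T}^2(\bar{\phi}_h^*)-(T^*)^2|$ by Cauchy--Schwarz and Young. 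Since $\hat{T}^2(\bar{\phi})=|\bar{\phi}'|_{0,\Gamma_1}^2/|A\bar{\phi}|_{0,\Gamma_1}^2$ is locally Lipschitz in $H^1(\Gamma_1)$ on sets where $|A\bar{\phi}|_{0,\Gamma_1}$ is bounded below, and since $\bar{\phi}_h^*\to\bar{\phi}^*$ strongly in $H^1(\Gamma_1)$ in this SPD setting---an upgrade of the weak convergence in Theorem \ref{thm:cong_minima_Shat} obtained by combining $\hat{S}_h(\bar{\phi}_h^*)\to\hat{S}(\bar{\phi}^*)$ with the identity $\hat{S}(\bar{\phi})=|\bar{\phi}'|_{0,\Gamma_1}|A\bar{\phi}|_{0,\Gamma_1}-\langle\bar{\phi}',A\bar{\phi}\rangle_{\Gamma_1}$ from Lemma \ref{lem:quasi_potential_finite_T_solution_existence} to promote weak convergence of $(\bar{\phi}_h^*)'$ to norm convergence---we obtain $|\hat{T}^2(\bar{\phi}_h^*)-(T^*)^2|\leq L\,|\bar{\phi}^*-\bar{\phi}_h^*|_{1,\Gamma_1}$ for $h$ sufficiently small.

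Combining via the triangle inequality yields
\[
|\bar{\phi}^*-\bar{\phi}_h^*|_{1,\Gamma_1}\leq C_1(T^*)^2\inf_{w\in V_h}|\bar{\phi}_0^*-w|_{1,\Gamma_1}+C_2 L\,|\bar{\phi}^*-\bar{\phi}_h^*|_{1,\Gamma_1},
\]
from which the claim follows by absorbing the last term into the left-hand side, the constant $C\sim(T^*)^2$ coming from the first step. The main obstacle is exactly this absorption step: the product $C_2 L$ is not intrinsically less than one, so a crude Lipschitz bound on $\hat{T}^2$ need not close. To handle this I would refine the Lipschitz estimate by expanding $\hat{T}^2$ about $\bar{\phi}^*$ to first order and invoking \eqref{eqn:weak_c1} to rewrite the linearization in a form controlled by $a(\cdot,\cdot)$ itself, producing an effective constant that can be made less than one for $h$ small. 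All other ingredients are then standard linear FEM theory.
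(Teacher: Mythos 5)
Your overall architecture (freeze the nonlocal coefficient at $(T^*)^2$, compare with the resulting linear Galerkin solution $\tilde{\phi}_h$ via C\'{e}a's lemma, then treat $\hat{T}^2(\bar{\phi}_h^*)-(T^*)^2$ as a perturbation) is coherent, and your first two steps are correct; your upgrade of weak to strong $H^1$ convergence via the identity $\hat{S}(\bar{\phi})=|\bar{\phi}'|_{0,\Gamma_1}|b(\bar{\phi})|_{0,\Gamma_1}-\langle\bar{\phi}',b(\bar{\phi})\rangle_{\Gamma_1}$ and the convergence of the minima is also sound. The genuine gap is exactly the one you flag: the absorption. The Lipschitz constant $L$ of $\hat{T}^2$ near $\bar{\phi}^*$ is a fixed quantity of order $T^*(1+\|A\|C_pT^*)/|A\bar{\phi}^*|_{0,\Gamma_1}$ that does not shrink as $h\rightarrow0$, and $C_2\sim\|A\|C_p|A\bar{\phi}^*|_{0,\Gamma_1}$, so $C_2L\sim 2M+2M^2$ with $M=\|A\|C_pT^*$; there is no reason for this to be below $1$, and "$h$ small" does not help. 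Your proposed repair does not close the gap either: linearizing $\hat{T}^2$ at $\bar{\phi}^*$ and using \eqref{eqn:weak_c1} gives $\delta(\hat{T}^2)(\delta\bar{\phi})=-4(T^*)^2\langle A\bar{\phi}^*,A\,\delta\bar{\phi}\rangle_{\Gamma_1}/|A\bar{\phi}^*|^2_{0,\Gamma_1}$, which is controlled by $|\delta\bar{\phi}|_{0,\Gamma_1}$ rather than $|\delta\bar{\phi}|_{1,\Gamma_1}$ (the paper makes the same observation after Proposition \ref{lem:priori}); but to conclude that this term is $o(|\bar{\phi}_h^*-\bar{\phi}^*|_{1,\Gamma_1})$ you would need an Aubin--Nitsche-type $L^2$ superconvergence estimate, which presupposes the $H^1$ quasi-optimality you are trying to prove. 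Without a genuine bootstrap (e.g., first a suboptimal rate, then improvement) the argument is circular, and bounding $|\delta\bar{\phi}|_{0,\Gamma_1}\leq C_p|\delta\bar{\phi}|_{1,\Gamma_1}$ just reinstates a non-small constant.

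For comparison, the paper proceeds differently: it measures the error against the $H^1$-seminorm projection $\eta$ of $\bar{\phi}^*$ onto $V_h\oplus\phi_L$, uses Galerkin orthogonality to reduce $|\bar{\phi}_h^*-\eta|^2_{1,\Gamma_1}$ to a consistency term, and splits that term into $I_1$ (comparing $\bar{\phi}_h^*$ with $\eta$) and $I_2$ (comparing $\eta$ with $\bar{\phi}^*$). The term $I_2$ is bounded by $C_{I_2}|\eta-\bar{\phi}^*|_{1,\Gamma_1}|\bar{\phi}_h^*-\eta|_{1,\Gamma_1}$ --- i.e., it is directly proportional to the best-approximation error, so no absorption is needed --- while $I_1$ is discarded because its leading part $-\hat{T}^2(\bar{\phi}_h^*)|A(\bar{\phi}_h^*-\eta)|^2_{0,\Gamma_1}$ has a favorable sign for small $h$. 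If you want to salvage your route, the cleanest fix is to adopt that decomposition: the quantity you must control is then $\hat{T}^2(\eta)-\hat{T}^2(\bar{\phi}^*)$ (of the order of the best-approximation error) rather than $\hat{T}^2(\bar{\phi}_h^*)-(T^*)^2$ (of the order of the full error), which is precisely what removes the need to absorb a term with an $O(M+M^2)$ constant.
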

\begin{proof}
Let $\eta$ be the best approximation of $\bar{\phi}^*$ on
$V_h\oplus\phi_L$, i.e.,
\[
\left|\bar{\phi}^*-\eta\right|_{1,\Gamma_1}=
\inf_{w\in V_h\oplus\phi_L}
\left|\bar{\phi}^*-w\right|_{1,\Gamma_1}.
\]
We then have
\[
\langle(\bar{\phi}^*-\eta)',w'\rangle=0,\quad \forall\ w\in V_h,
\]
where $\bar{\phi}^*-\eta\in H_0^1(\Gamma_1)$. Consider
\begin{align}
\left|\bar{\phi}^*_h-\eta\right|^2_{1,\Gamma_1}=&\left\langle
(\bar{\phi}_h^*-\bar{\phi}^*)',(\bar{\phi}_h^*-
\eta)'\right\rangle_{\Gamma_1}+\left\langle
(\bar{\phi}^*-\eta)',(\bar{\phi}_h^*-
\eta)'\right\rangle_{\Gamma_1}\nonumber\\
=&\left\langle
(\bar{\phi}_h^*-\bar{\phi}^*)',(\bar{\phi}_h^*-
\eta)'\right\rangle_{\Gamma_1}\nonumber\\
=&-\left\langle
(\hat{T}^2(\bar{\phi}_h^*)\bar{\phi}_h^*-\hat{T}^2(\bar{\phi}^*)
\bar{\phi}^*),A^2(\bar{\phi}^*_h-\eta)
\right\rangle_{\Gamma_1}\nonumber\\
=&-\left\langle
(\hat{T}^2(\bar{\phi}_h^*)\bar{\phi}_h^*-\hat{T}^2(\eta)
\eta),A^2(\bar{\phi}^*_h-\eta)
\right\rangle_{\Gamma_1}\nonumber\\
&-\left\langle
(\hat{T}^2(\eta)\eta-\hat{T}^2(\bar{\phi}^*)
\bar{\phi}^*),A^2(\bar{\phi}^*_h-\eta)
\right\rangle_{\Gamma_1}\nonumber\\
=&I_1+I_2.
\label{eqn:priori_1}
\end{align}
We look {at} $I_2$ first. Note that
\begin{align}
&|\hat{T}^2(\eta)-\hat{T}^2(\bar{\phi}^*)|\nonumber\\
&=
\left|\frac{|\eta|^2_{1,\Gamma_1}}{|A\eta|^2_{0,\Gamma_1}}-
\frac{|\bar{\phi}^*|^2_{1,\Gamma_1}}{|A\bar{\phi}^*|^2_{0,\Gamma_1}}\right|\nonumber\\
&=\left|\frac{|\eta|^2_{1,\Gamma_1}-|\bar{\phi}^*|^2_{1,\Gamma_1}
}{|A\eta|^2_{0,\Gamma_1}}+\frac{|\bar{\phi}^*|^2_{1,\Gamma_1}
(|A\bar{\phi}^*|^2_{0,\Gamma_1}-|A\eta|^2_{0,\Gamma_1})}{|A\eta|^2
_{0,\Gamma_1}|A\bar{\phi}^*|^2_{0,\Gamma_1}}\right|\nonumber\\
&\leq C_{\hat{T}}(\eta,\bar{\phi}^*)|\eta-\bar{\phi}^*|_{1,\Gamma_1},
\label{Error_Estimate_T}
\end{align}
where
\[
C_{\hat{T}}(\eta,\bar{\phi}^*)=
\frac{|\eta|_{1,\Gamma_1}+|\bar{\phi}^*|_{1,\Gamma_1}}{|A\eta|^2_{0,\Gamma_1}}
+\frac{|\bar{\phi}^*|^2_{1,\Gamma_1}(|A\eta|_{0,\Gamma_1}+
|A\bar{\phi}^*|_{0,\Gamma_1})
\|A\|C_p}{|A\eta|^2_{0,\Gamma_1}|A\bar{\phi}^*|^2_{0,\Gamma_1}},
\]
and $C_p$ is the Poincar\'{e} constant. Then we have
\begin{align}
|I_2|=&\left|\left\langle
(\hat{T}^2(\eta)\eta-\hat{T}^2(\bar{\phi}^*)
\bar{\phi}^*),A^2(\bar{\phi}^*_h-\eta)
\right\rangle_{\Gamma_1}\right|\nonumber\\
\leq&\left|\left\langle
\hat{T}^2(\eta)(\eta-
\bar{\phi}^*),A^2(\bar{\phi}^*_h-\eta)
\right\rangle_{\Gamma_1}\right|\nonumber\\
&+\left|\left\langle
(\hat{T}^2(\eta)-\hat{T}^2(\bar{\phi}^*))
\bar{\phi}^*,A^2(\bar{\phi}^*_h-\eta)
\right\rangle_{\Gamma_1}\right|\nonumber\\
\leq&(\hat{T}^2(\eta)C_p^2\|A\|^2+
C_{\hat{T}}(\eta,\bar{\phi}^*)
|A\bar{\phi}^*|_{0,\Gamma_1}\|A\|C_p)|\eta-\bar{\phi}^*|_{1,\Gamma_1}
|\bar{\phi}_h^*-\eta|_{1,\Gamma_1}
\nonumber\\
=&C_{I_2}(\eta,\bar{\phi}^*)|\eta-\bar{\phi}^*|_{1,\Gamma_1}|
\bar{\phi}_h^*-\eta|_{1,\Gamma_1}.
\label{eqn:priori_2}
\end{align}
By the definition of $\eta$, we have
\[
\lim_{h\rightarrow0}|\eta|_{1,\Gamma_1}=|\bar{\phi}^*|_{1,\Gamma_1},
\quad \lim_{h\rightarrow0}|A\eta|_{0,\Gamma_1}=|A\bar{\phi}^*|_{0,\Gamma_1}.
\]
We then have
\[
\lim_{h\rightarrow0}C_{I_2}(\eta,\bar{\phi}^*)=2M+3M^2,
\]
where $M=\|A\|C_pT^*$.

We now look at $I_1$. Since $\lim_{h\rightarrow0}\hat{T}
(\bar{\phi}_h^*)=\lim_{h\rightarrow0}\hat{T}(\eta)=T^*$ and
$T^*<\infty$, we know that when $h$ is small enough, $I_1\sim
-(T^*)^2\left\langle
(\bar{\phi}_h^*-
\eta),A^2(\bar{\phi}^*_h-\eta)
\right\rangle_{\Gamma_1}<0$.
Combining this fact with equations \eqref{eqn:priori_1} and
\eqref{eqn:priori_2}, we have that for $h$ small enough there
exists a constant $C>2M+3M^2$ such that
\[
\left|\bar{\phi}^*_h-\eta\right|_{1,\Gamma_1}
\leq C\left|\bar{\phi}^*-
\eta\right|_{1,\Gamma_1}
=C\inf_{w\in V_h\oplus\phi_L}\left|\bar{\phi}^*-
w\right|_{1,\Gamma_1}.
\]
\end{proof}

%{\color{red} By a duality argument, we get
%	\begin{equation}
%	\left|\bar{\phi}^*-\bar{\phi}^*_{h}\right|_{0,\Gamma_1}
%	\le Ch \left|\bar{\phi}^*-\bar{\phi}^*_{h}\right|_{1,\Gamma_1}.
%	\end{equation}
%	So if $\left| \bar{\phi}^* \right|_{2,\Gamma_1} < \infty $, we obtain $\left|\bar{\phi}^*-\bar{\phi}^*_{h}\right|_{1,\Gamma_1}\sim O(h)$ and $\left|\bar{\phi}^*-\bar{\phi}^*_{h}\right|_{0,\Gamma_1}\sim O(h^2)$.
%}

To this end, we obtain a similar a priori error estimate to that
for Problem I with a fixed $T$. Since $T^*$ can be arbitrarily
large, we know that the optimal convergence rate may degenerate
when a boundary layer exists. Using {Proposition} \ref{lem:priori}, we
can easily obtain the optimal convergence rate with respect to
the error of action functional:
\begin{equation}\label{eqn:S_cong}
|\hat{S}(\bar{\phi}^*)-\hat{S}(\bar{\phi}_h^*)|
\sim|\delta^2\hat{S}(\bar{\phi}^*)|\sim
|\bar{\phi}_h^*-\bar{\phi}^*|_{1,\Gamma_1}^2,
\end{equation}
where the second-order variation can be obtained with respect
to the perturbation function $\delta\bar{\phi}=\bar{\phi}^*
-\bar{\phi}_h^*$. 

{The convergence rate for $T^*$ is also optimal. For a 
	general case, we have  the first-order variation of $\hat{T}^2$ at $\bar{\phi}^*$ with a test function 
	$\delta\bar{\phi}$ as
	\begin{align*}
	|\delta(\hat{T}^2)|&=\left|\frac{2\langle (\bar{\phi}^*)'',\delta\bar{\phi}\rangle_{\Gamma_1}+2(T^*)^2\langle (\nabla b)^\mathsf{T}b,\delta\bar{\phi}\rangle_{\Gamma_1}}{|b(\bar{\phi}^*)|^2_{0,\Gamma_1}}\right|\\
	&\leq 2|b(\bar{\phi}^*)|^{-2}_{0,\Gamma_1}\left(
	|\bar{\phi}^*|_{2,\Gamma_1}+(T^*)^2K|b(\bar{\phi}^*)|_{0,\Gamma_1}
	\right)|\delta\phi|_{0,\Gamma_1}.
	\end{align*}
	Also note that the second order variation has the same order as $|\delta\phi|^2_{1,\Gamma_1}$. Let  $\delta\phi=\bar{\phi}_h^*-\bar{\phi}^*$. From Proposition \ref{lem:priori}, we know the second-order variation 
	has an optimal convergence rate $|\bar{\phi}_h^*-\bar{\phi}^*|_{1,\Gamma_1}^2\sim\mathit{O}(h^2)$, when $\bar{\phi}^*\in H^2(\Gamma_1)$. If $|\bar{\phi}_h^*-\bar{\phi}^*|_{0,\Gamma_1}$ can also reach its optimal rate, which is 
	of order $\mathit{O}(h^2)$,  the overall convergence rate for $T^*$ is of order $\mathit{O}(h^2)$. We will not analyze the optimal convergence of $\bar{\phi}_h^*$ in $L^2$ norm here, but only 	
	provide numerical evidence of the optimal convergence rate for $T^*$ in the next section.
	
}

\section{Numerical experiments}
We will use the following simple linear {stochastic ODE} system to demonstrate our analysis results
\begin{equation}\label{eqn:numerical_example}
dX(t)=AX(t)\,dt+\sqrt{\varepsilon}\,dW(t),
\end{equation}
where
\[
A=B^{-1}JB=\begin{bmatrix}
a & -b \\ b & a \end{bmatrix}\begin{bmatrix}
\lambda_1 & 0 \\ 0 & \lambda_2 \end{bmatrix}\begin{bmatrix}
a & b \\ -b & a \end{bmatrix},
\]
with $a=1/3$, $b=\sqrt{8}/3$, $\lambda_1=-10$, and $\lambda_2=-2$. Then $z=(0,0)^{\mathsf{T}}$ is a stable fixed point. For the corresponding deterministic system, namely when $\varepsilon=0$, and any given point $X(0)=x\not=z$ in the phase space, the  trajectory $X(t)=e^{tA}x$ converges to $z$ as $t\rightarrow\infty$. When noise exists,  this trajectory is also the {minimal} action path $\phi^*$ from $x$ to $e^{tA}x$ with
$T^*=t$, since $V(x,e^{tA}x)=0$. Moreover, if the ending point is $z$,
$T^*=\infty$. This obviously is not an exit problem, which is {a} typical application
of MAM. However, it includes most of the numerical difficulties of MAM, and the trajectory
can serve as an exact solution, which simplifies the discussions.

Consider the minimal action path from $x$ ($\neq z$) to $e^{tA}x$ such that $T^*=t$. Since the minimal action path corresponds to a trajectory, we can use the value of action functional as the measure of error with an optimal rate $\mathit{O}(h^2)\sim\mathit{O}(N^{-2})$ (see equation
\eqref{eqn:S_cong}), where $N$ is the number of elements.
We will look at the following two cases:
\begin{romannum}
	\item $T^*$ is finite and small. According to Theorem \ref{thm:cong_minima_Shat}, $\bar{\phi}_h$ {converges} to $\bar{\phi}^*$. Since $T^*$ is small, according to
	Proposition \ref{lem:priori}, we expect optimal convergence rate of $\bar{\phi}_h$ as $h\rightarrow0$.
	\item $T^*=\infty$. We will compare the convergence behavior between tMAM and MAM with a fixed large $T$. According to Theorem \ref{thm:gmam}, we  have the convergence
	in $\bar{C}_x^z$. However, the convergence behavior of this case {is} similar to
	that for a finite but large $T^*$, where we expect a deteriorated convergence rate.
\end{romannum}

\subsection{Case (i)} Let $x=(1,1)$. We use $e^{A}x$ as the ending point such that $T^*=1$. In figure \ref{fig:T1} we plot the convergence behavior of tMAM with uniform
linear finite element discretization. It is seen that the optimal convergence rate
is reached for both action functional and $T^*$ estimated by $\hat{T}({\bar{\phi}}_h^*)$.

\begin{figure}
	\center{
		\includegraphics[width=0.49\textwidth]
		{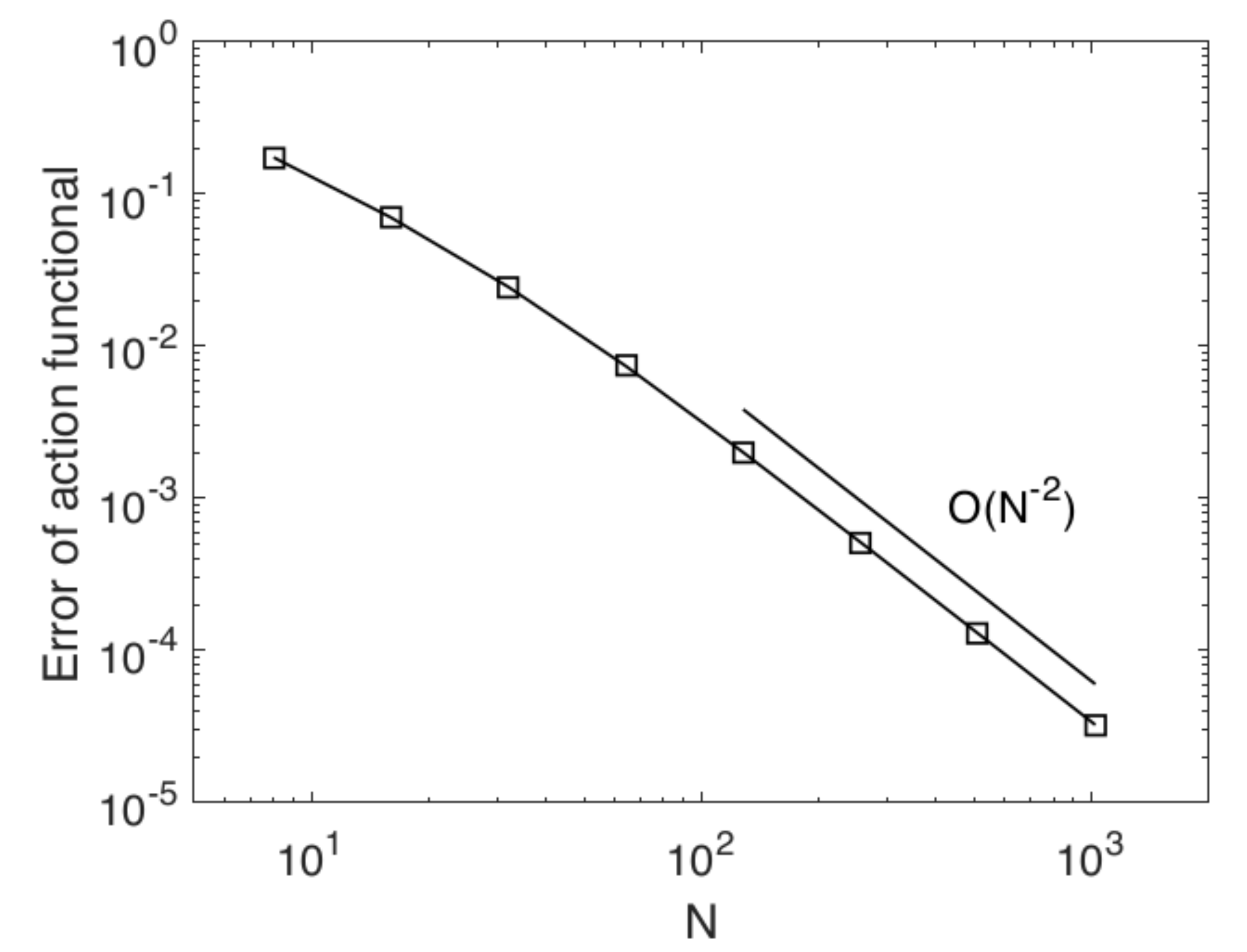}
		\includegraphics[width=0.49\textwidth]
		{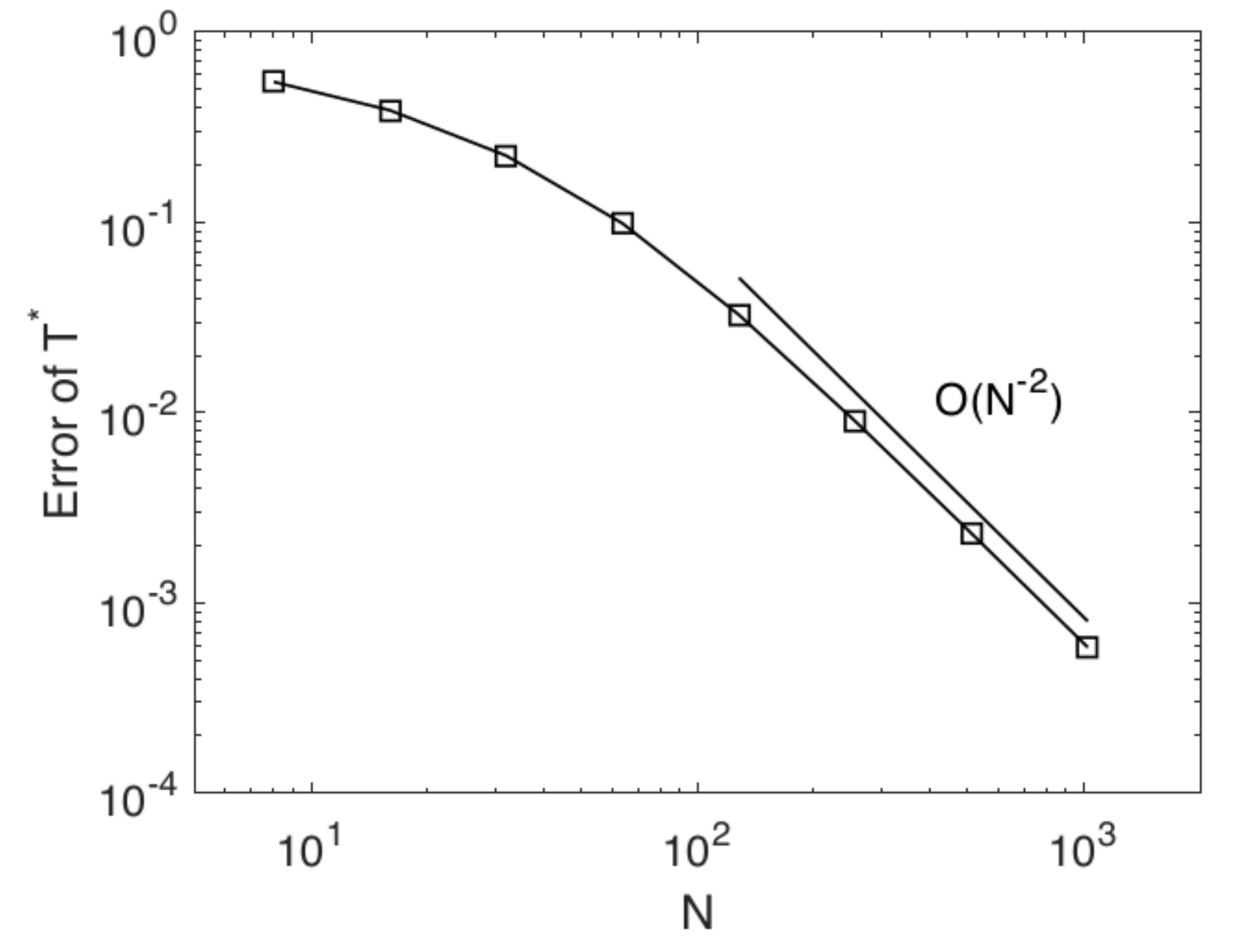}
	}
	\caption{Convergence behavior of tMAM for Case (i). Left: errors of action functional; Right: errors of $T^*$.}\label{fig:T1}
\end{figure}

\begin{figure}
	\center{
		\includegraphics[width=0.49\textwidth]
		{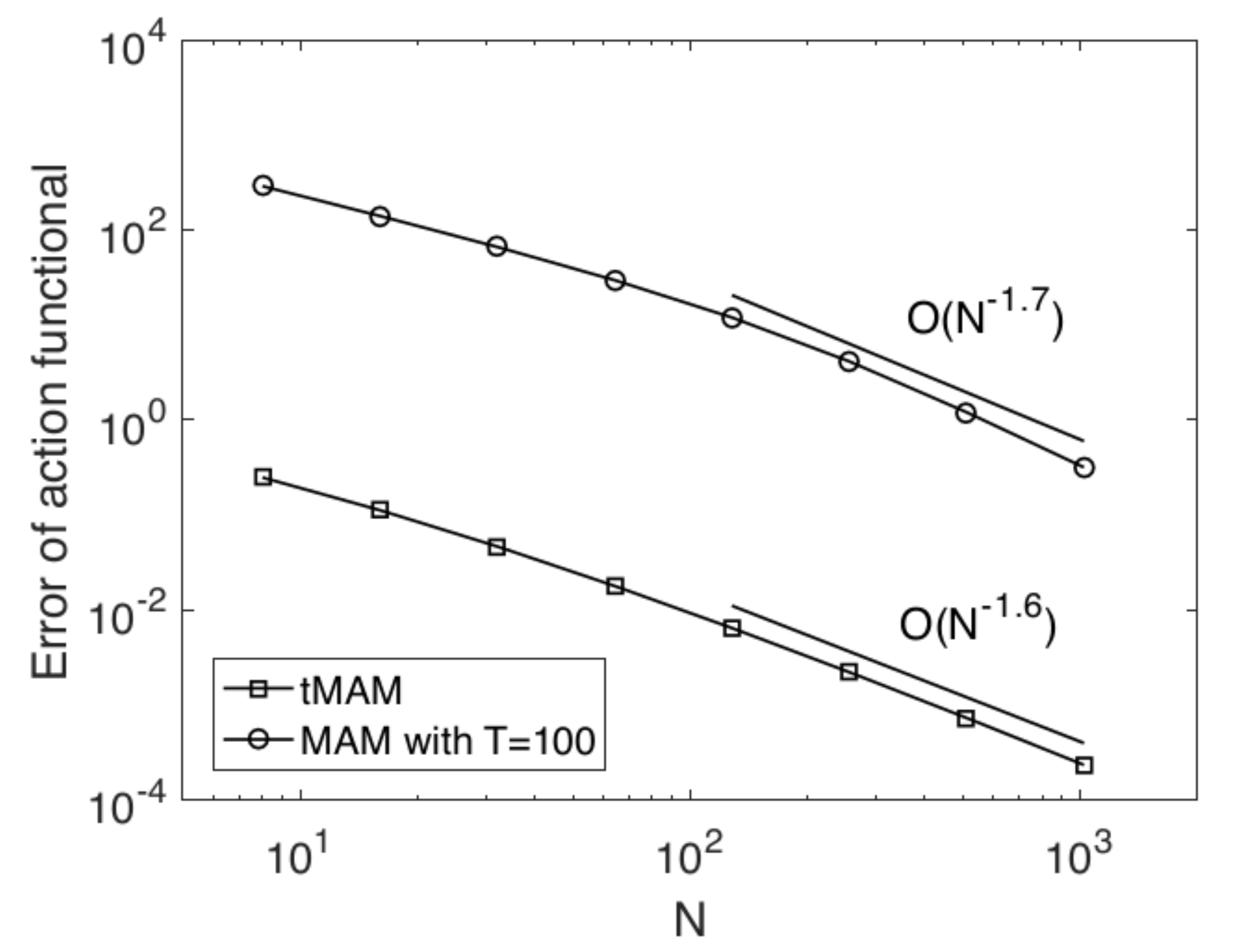}
		\includegraphics[width=0.49\textwidth]
		{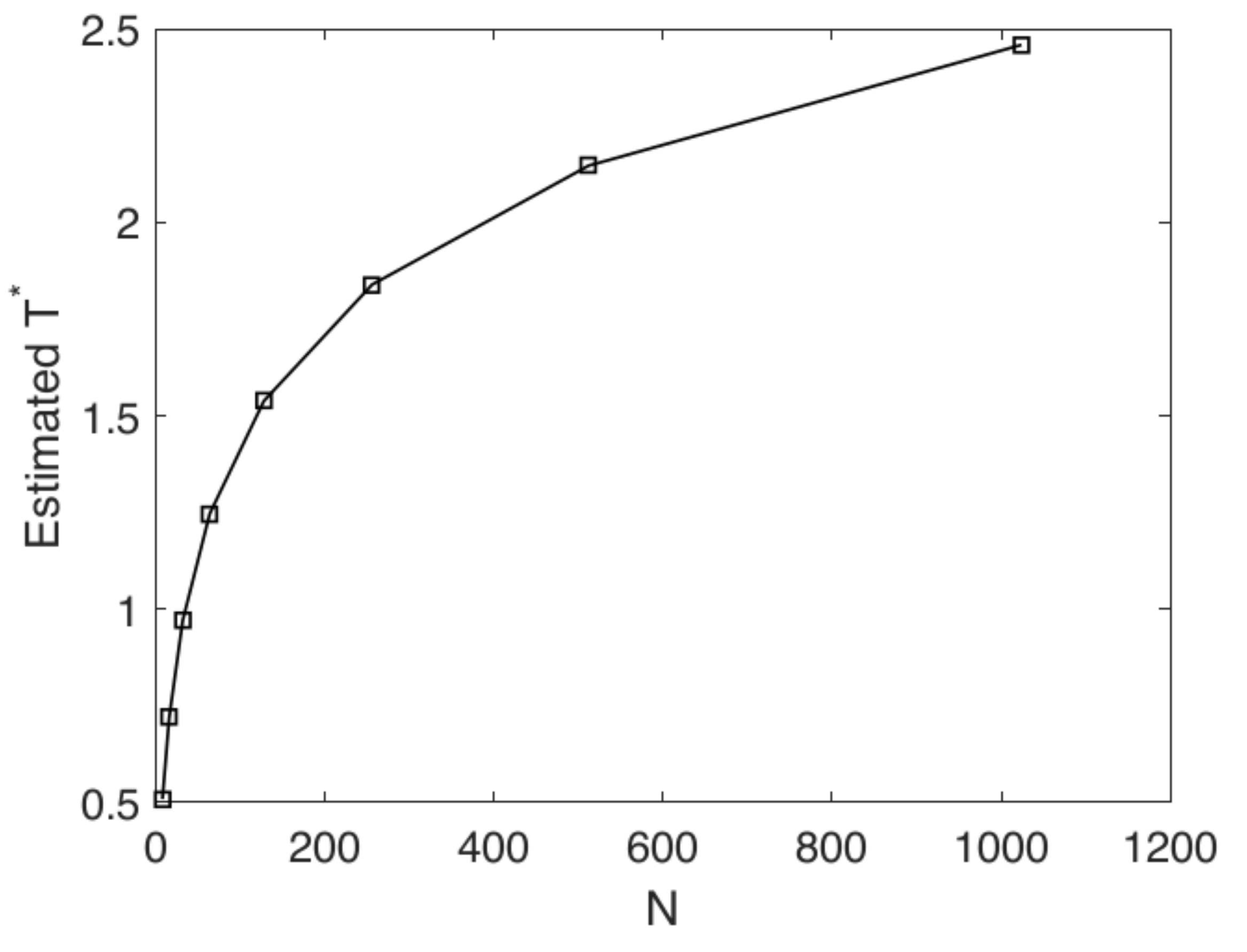}
	}
	\caption{Convergence behavior of tMAM and MAM with a fixed $T$ for Case (ii). Left: errors 
		of action functional; Right: estimated $T^*$ of tMAM, i.e., $\hat{T}(\bar{\phi}_h^*)$.}\label{fig:T_inf}
\end{figure}

\begin{figure}
	\center{
		\includegraphics[width=0.6\textwidth]
		{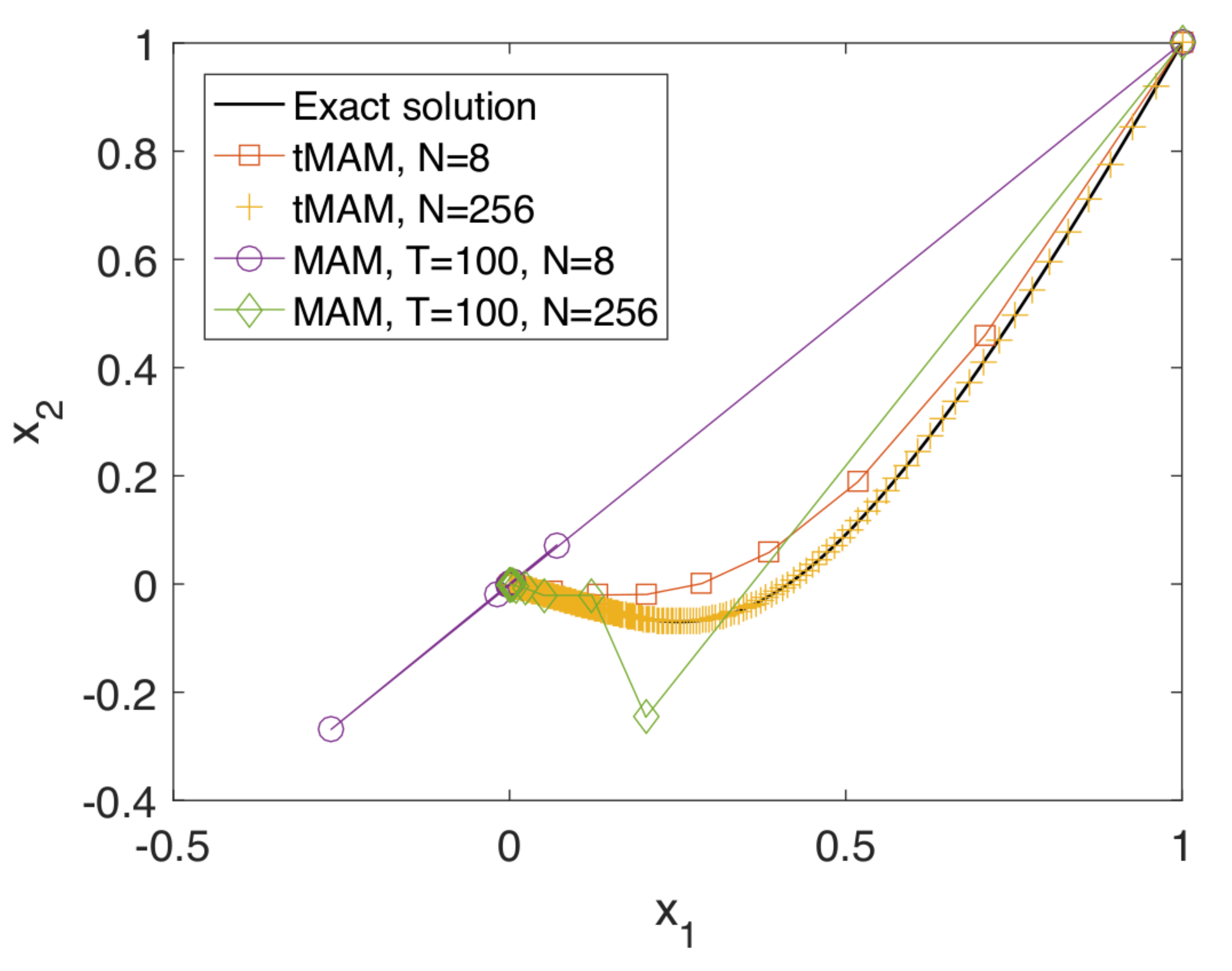}
	}
	\caption{Approximate MAPs given by tMAM and MAM with a fixed $T$ for Case (ii).}\label{fig:path}
\end{figure}

\subsection{Case (ii)} For this case, we still use $x=(1,1)$ as the starting point.
The ending point is chosen as $a=(0,0)^\mathsf{T}$ such that $T^*=\infty$. Except tMAM,
we use MAM with a fixed $T$ to approximate this case, where $T$ is supposed to be large.
In general, we do not have a criterion to define how large is enough because the accuracy
is affected by two competing issues: 1) The fact that $T^*=\infty$ favors a large $T$; but
2) a fixed discretization favors a small $T$. This implies that {for any given $h$,} 
an ``optimal'' finite $T$ exists. For the purpose of demonstration, we choose $T=100$, 
which is actually too large from the accuracy point of view. Let $\phi_T^*(t)$ be the 
approximate MAP given by MAM with a fixed $T$. We know that 
$\bar{\phi}^*_T(s)=\phi_T^*(t/T)$ {yields} a smaller action
with the integration time $\hat{T}(\bar{\phi}^*_T(s))$. In this sense, no matter what $T$
is chosen, for the same discretization tMAM will always provide a better approximation
than MAM with a fixed $T$. The reason we use an overlarge $T$ is to demonstrate the
deterioration of convergence rate. In figure \ref{fig:T_inf}, we plot the convergence 
behavior of tMAM and MAM with $T=100$ on the left, and the estimated $T^*$ given by tMAM
on the right. It is seen that the convergence is slower than $\mathit{O}(N^{-2})$ as 
we have analyzed in section \ref{sec:error}. For the same discretization,  tMAM has an
accuracy that is several orders of magnitude better than MAM with $T=100$.
In the right plot of figure \ref{fig:T_inf}, we
see that the optimal integration time for a certain discretization is actually not large
at all. This implies that MAM with a fixed $T$ for Case (ii) is actually not very reliable.
In figure \ref{fig:path}, we compare the MAPs given by tMAM and MAM with the exact
solution $e^{tA}x$, where all symbols indicate the nodes of finite element discretization.
First of all, we note that the number of effective nodes in MAM is small because of the
scale separation of fast dynamics and small dynamics. Most nodes are clustered around the
fixed point. This is called a problem of clustering (see \cite{Zhou_2016,Wan_tMAM_hp} 
for the discussion of this issue). Second, if the chosen $T$ is too large, oscillation 
is observed in the paths given by MAM especially when the resolution is relatively low; on 
the other hand, tMAM does not suffer such an oscillation by adjusting the integration 
time according to the resolution. 
Third, although tMAM is able to provide a good 
approximation even with a coarse discretization, more than enough nodes are put into the 
region around the fixed point, which corresponds to the deterioration of convergence rate. 
To recover the optimal convergence rate, we need to resort to adaptivity (see 
\cite{Wan_tMAM,Wan_tMAM_hp} for the construction of the algorithm).

\section{Summary}
In this work, we have established some convergence results of minimum action methods based
on linear finite element discretization. In particular, we have demonstrated that the
minimum action method with optimal linear time scaling, i.e., tMAM, converges for Problem
II no matter that the optimal integration time is finite or infinite.

\section*{Acknowledgement}
X. Wan and J. Zhai were supported by AFOSR grant FA9550-15-1-0051 and 
NSF Grant DMS-1620026. H. Yu was  supported by
NNSFC Grant 11771439,91530322 and Science Challenge Project No.~TZ2018001.

\end{document}